\theoremstyle{plain}
\newtheorem{theorem}{Theorem}[section]
\newtheorem{proposition}[theorem]{Proposition}
\newtheorem{lemma}[theorem]{Lemma}
\newtheorem{corollary}[theorem]{Corollary}
\theoremstyle{remark}
\newtheorem{remark}[theorem]{Remark}
\newtheorem{example}[theorem]{Example}
\numberwithin{equation}{section}
\title[4-Dimensional Isoparametric Hypersurfaces of Index 2]{4-Dimensional Isoparametric Hypersurfaces of Index 2 in the Pseudo-Riemannian Space Forms}
\author[Y. Sasahara]{Yuta Sasahara}
\subjclass{53B30}
\keywords{isoparametric hypersurface, pseudo-Riemannian space forms, shape operator, Petrov's classification theorem}
\thanks{This work was supported by JST SPRING, Grant Number \textsf{JPMJSP2156}.}
\address{Department of Mathematical Sciences, Graduate School of Science, Tokyo Metropolitan University, Minami-Osawa 1-1, Hachioji, Tokyo, 192-0397, Japan}
\email{sasahara-yuta@ed.tmu.ac.jp, ysasahara0804+res@gmail.com}
\begin{document}
\begin{abstract}
    We study isoparametric hypersurfaces, whose principal curvatures are all constant, in the pseudo-Riemannian space forms.
    In this paper, we investigate three topics.
    Firstly, according to Petrov's classification theorem, we give a classification of hypersurfaces of index 2 with respect to a pair of a shape operator and a metric.
    Therefore, we can define types of isoparametric hypersurfaces of index 2 concerning the classification.
    Secondly, we give several examples of certain types.
    Thirdly, we show that there exist no isoparametric hypersurfaces of index 2 whose shape operators have complex principal curvatures in certain cases.
\end{abstract}

\maketitle

\section{Introduction}\label{sec:1}
Hahn gave the definition of isoparametric hypersurfaces: a non-degenerate hypersurface in the pseudo-Riemannian space form is isoparametric if its principal curvatures (the eigenvalue functions of its shape operator) are all constant, which is written in \cite[2.1. Proposition]{MR753432}.
After \cite{MR753432}, classification problems of Lorentzian isoparametric hypersurfaces have been developed in \cite{MR783023}, \cite{MR1696128}, and \cite{MR3077208}.
Since the study of hypersurfaces in the pseudo-Riemannian space forms is generally much more difficult than that in the Riemannian space forms, the study of isoparametric hypersurfaces in the pseudo-Riemannian space forms is still developing.

In \cite{MR783023}, \cite{MR1696128}, and \cite{MR3077208}, they used the other definition of isoparametric hypersurfaces: a non-degenerate hypersurface in the pseudo-Riemannian space form is isoparametric if the minimal polynomial of its shape operator is constant.
If a hypersurface satisfies the latter definition, then it also satisfies the former definition.
However, the converse of this claim is not true.
In fact, \cite[2.5. \textit{Example}]{MR753432} is a counterexample to the claim, which is explained in Example \ref{exam:0-1}.
The shape operator of an isoparametric hypersurface as the latter definition of index 1 can be put into exactly one of the four types, which are obtained by Petrov's classification theorem in \cite{MR0244912}.
It is explained in Sections \ref{sec:3-1} and \ref{sec:3-2}.
The fact helps us solve classification problems of isoparametric hypersurfaces as the latter definition.
In this paper, we adopt the former definition and consider a classification of isoparametric hypersurfaces of index 2 with respect to a pair of a shape operator and a metric.
Therefore, we can define 13 types of isoparametric hypersurfaces concerning the classification, which are called Petrov types in this paper, in Section \ref{sec:3-3}.

We have investigated the inhomogeneity of isoparametric hypersurfaces of OT-FKM-type in the pseudo-sphere in \cite{sasahara2023inhomogeneity}.
Next, we focus on the notion of indices of hypersurfaces in the pseudo-Riemannian space forms.
The purpose of this paper is to investigate the existence and the non-existence of 4-dimensional isoparametric hypersurfaces of index 2 in the pseudo-Riemannian space forms.

Firstly, we give several examples of isoparametric hypersurfaces of index 2 in the pseudo-Riemannian space forms.
We find the appropriate matrices and obtain the examples by substituting them into the matrices in \cite[\S 2, Quadratic Examples]{MR753432} in Section \ref{sec:4-1}.
Referring to local examples of isoparametric hypersurfaces of index 1 which are written in \cite[3. Examples]{MR783023}, we obtain several examples of ones of index 2 in Section \ref{sec:4-2}.

Secondly, we show the non-existence of isoparametric hypersurfaces whose Petrov types are certain ones of index 2 in Section \ref{sec:4-3}.
In the previous research, we have already found that there are no Lorentzian isoparametric hypersurfaces whose shape operator has complex principal curvatures in the Minkowski space and the pseudo-sphere in \cite[Theorem 4.10]{MR783023} and \cite[Theorem 4.1]{MR3077208}, respectively.
On the analogy of it, we show that there are no isoparametric hypersurfaces of index 2 whose shape operators have complex principal curvatures in certain cases.

\section{Preliminaries}\label{sec:2}
In this section, we prepare our notations and state basic facts about isoparametric hypersurfaces.
Throughout this paper, we suppose that all of manifolds and mappings are smooth.

We write ${}^{t}{\!}P$ for the transpose of a matrix $P$.
The identity matrix and the zero matrix of order $n$ are denoted by $E_{n}$ and $O_{n}$, respectively.
The column vector $\bm{e}_{i}\in \mathbb{R}^{n}$ denotes the $i$-th column of $E_{n}$.
We write $P\oplus Q$ for the direct sum of matrices $P$ and $Q$.
The anti-diagonal matrix whose anti-diagonal entries are all $1$ of order $n$ is denoted by $E^{\mathfrak{a}}_{n}$.
Let $V$ be a finite-dimensional vector space over $\mathbb{R}$ equipped with a non-degenerate symmetric bilinear form $B\colon V\times V\to \mathbb{R}$.
The set of all endomorphisms of $V$ which are self-adjoint with respect to $B$ is denoted by $\mathrm{Sym}(V, B)$.

Let $n$ be a positive integer except $1$.
Let $s\in \{0, 1, \ldots, n\}$.
Let $\mathbb{R}^{n}_{s}$ be an $n$-dimensional pseudo-Euclidean space provided with the pseudo-inner product $\langle u, v\rangle_{s}:={}^{t}{\!}u ((-E_{s})\oplus E_{n-s})v$ for $u, v\in \mathbb{R}^{n}_{s}$.
We define the pseudo-sphere $S^{n}_{s}:=\{x\in \mathbb{R}^{n+1}_{s}\mid \langle x, x\rangle_{s}=1\}$ and the pseudo-hyperbolic space $H^{n}_{s}:=\{x\in \mathbb{R}^{n+1}_{s+1}\mid \langle x, x\rangle_{s+1}=-1\}$ of index $s$.
(However, we define $S^{n}_{n}=\{x\in \mathbb{R}^{n+1}_{n}\mid \langle x, x\rangle_{s}=1, x^{n+1}>0\}$ and $H^{n}_{0}:=\{x\in \mathbb{R}^{n+1}_{1}\mid \langle x, x\rangle_{1}=-1, x^{1}>0\}$.)
The spaces $S^{n}_{s}, \mathbb{R}^{n}_{s}$, and $H^{n}_{s}$ are called the pseudo-Riemannian space forms, which have the constant curvatures $\kappa=1$, $0$, and $-1$, respectively.
The $n$-dimensional pseudo-Riemannian space form of index $s$ with constant curvature $\kappa$ is denoted by $N^{n}_{s}(\kappa)$.
Let $\bar{\nabla}$ be the natural Levi-Civita connection of $N(\kappa)$.

Let $n$ be a positive integer and $s\in \{0, 1, \ldots, n+1\}$.
Let $M\subset N^{n+1}_{s}(\kappa)$ be a non-degenerate connected hypersurface and suppose that there is a unit normal vector field $\xi$ on $M$.
Let $\iota\colon M\hookrightarrow N(\kappa)$ be the inclusion map.
The induced metric and the induced connection of $M$ are denoted by $\langle \cdot, \cdot\rangle$ and $\nabla$, respectively.
Let $\nu:=\langle \xi, \xi\rangle$ and $\delta:=\kappa\cdot\nu\in \{-1, 0, 1\}$.
We call $\delta$ the type of $M$.

We define $I(X):=X$ and $A(X):=-\bar{\nabla}^{\iota^{*}TN(\kappa)}_{X}\xi$ for $X\in \Gamma(TM)$.
Then $I, A\in \Gamma(\mathrm{End}(TM))$ holds.
The endomorphism $I$ and $A$ are called the identity transformation of $TM$ and the shape operator of $M$, respectively.
For each $x\in M$, $A_{x}\in \mathrm{Sym}(T_{x}M, \langle\cdot,\cdot\rangle_{x})$ holds.
Referring to \cite[2.1]{MR753432}, $M$ is called isoparametric if the principal curvatures of $M$ (the eigenvalue functions of $A$) are all constant on $M$.
Let $M\subset N^{n+1}_{s}(-1)=H^{n+1}_{s}$ be an isoparametric hypersurface of type $\delta$.
By considering the anti-isometry, that is, by replacing $\langle \cdot, \cdot\rangle$ with $-\langle\cdot, \cdot\rangle$, we obtain the isoparametric hypersurface $(M, -\langle\cdot, \cdot\rangle)\subset N^{n+1}_{n+1-s}(1)=S^{n+1}_{n+1-s}$ of type $\delta$.
Thus, it is sufficient that we study isoparametric hypersurfaces in the pseudo-Riemannian space forms of constant curvature $\kappa=0, 1$.

A function $f\colon N^{n+1}_{s}(\kappa)\to \mathbb{R}$ is isoparametric if there exist functions $\Phi, \Psi\colon \mathbb{R}\to \mathbb{R}$ such that $\langle \mathrm{grad}^{N}(f), \mathrm{grad}^{N}(f)\rangle=\Phi\circ f$ and $\Delta^{N}(f)=\Psi\circ f$ hold, where $\mathrm{grad}^{N}(f)$ and $\Delta^{N}(f)$ are the gradient and the Laplacian of $f$ in $N^{n+1}_{s}(\kappa)$, respectively.
For an isoparametric function $f$, we set
$
    \mathrm{W}_{\mathrm{RN}}(f):=
    \left\{
        c\in f\left(N^{n+1}_{s}(\kappa)\right)
        \ \middle|\
        \Phi(c)\neq 0
    \right\}
$.
By \cite[2.2]{MR753432}, if $f\colon N^{n+1}_{s}(\kappa)\to \mathbb{R}$ is isoparametric and $c\in \mathrm{W}_{\mathrm{RN}}(f)$, then each connected component $M$ of $f^{-1}(c)\subset N^{n+1}_{s}(\kappa)$ is an isoparametric hypersurface.
Then $\mathrm{grad}^{N}(f)/\sqrt{\left|\Phi(c)\right|}$ is a unit normal vector field on $M$.

Let $R$ be the curvature tensor of $M$.
Since $\nabla$ is torsion-free, we have
\begin{equation}
    \langle R(X, Y)Z, W\rangle
    =
    \langle\nabla_{X}\nabla_{Y}Z, W\rangle
    -
    \langle\nabla_{Y}\nabla_{X}Z, W\rangle
    -
    \langle\nabla_{\nabla_{X}Y}Z, W\rangle
    +
    \langle\nabla_{\nabla_{Y}X}Z, W\rangle
    ,
    \label{eq:R}
\end{equation}
for $X, Y, Z, W\in \Gamma(TM)$.
On the other hand, we have the following equations:
\begin{align}
    &R(X, Y)Z
    =
    \kappa\{\langle Y, Z\rangle X -\langle X, Z\rangle Y\}
    +
    \nu\{\langle AY, Z\rangle AX -\langle AX, Z\rangle AY\},\label{eq:1}\\
    &\nabla_{X}(AY)-A(\nabla_{X}Y)
    =
    \nabla_{Y}(AX)-A(\nabla_{Y}X),\label{eq:2}
\end{align}
for $X, Y, Z\in \Gamma(TM)$, which are called the Gauss equation and the Codazzi equation, respectively.
Taking the inner product of both sides of \eqref{eq:1} and \eqref{eq:2}, we have
\begin{equation}
    \begin{split}
        \langle R(X, Y)Z, W\rangle
        &=
        \kappa\{\langle Y, Z\rangle \langle X, W\rangle  -\langle X, Z\rangle \langle Y, W\rangle\}\\
        &+
        \nu\{\langle AY, Z\rangle \langle AX, W\rangle -\langle AX, Z\rangle \langle AY, W\rangle\},
    \end{split}
    \label{eq:Gauss}
\end{equation}
\begin{equation}
    \langle \nabla_{X}(AY), Z\rangle -\langle \nabla_{X}Y, AZ\rangle
    =
    \langle \nabla_{Y}(AX), Z\rangle -\langle \nabla_{Y}X, AZ\rangle
    ,
    \label{eq:Codazzi}
\end{equation}
for $X, Y, Z, W\in \Gamma(TM)$.
In this paper, \eqref{eq:Gauss} is also called the Gauss equation and \eqref{eq:Codazzi} is denoted by $\{X, Y\}Z$.

We have Cartan's identity by \cite[Corollary 4.3]{MR2443486}.
Let $k_{1}, k_{2}, \ldots, k_{p}$ be the distinct principal curvatures of an isoparametric hypersurface $M$ ($p\geqq 2$) and $m_{1}, m_{2}, \ldots, m_{p}$ their algebraic multiplicities.
If $k_{i}$ is real and the geometric multiplicity of $k_{i}$ is equal to $m_{i}$, then we have the equation
\begin{equation}
    \sum_{j\in \{1, 2, \ldots, p\}\setminus\{i\}}
    m_{j}\frac{\delta+k_{i}k_{j}}{k_{j}-k_{i}}
    =
    0,
    \label{eq:Cartan}
\end{equation}
which is called Cartan's formula (Cartan's identity) for $k_{i}$.

\section{A Shape Operator}\label{sec:3}
In this section, we state Petrov's classification theorem and give a classification of isoparametric hypersurfaces of index 2 with respect to a pair of a shape operator and a metric.

\subsection{Petrov's Classification Theorem}\label{sec:3-1}
Petrov's classification theorem is a claim regarding linear algebra and is written in \cite[{\S 9. The Principal Axes Theorem for a Tensor}]{MR0244912}.
(It is also written in \cite[3.10]{koike-riron} in Japanese.)
Since the Jordan normal form theorem underlies this claim, we first begin with it.

Let $V$ be an $n$-dimensional real vector space.
Let $A\in \mathrm{End}(V)$ throughout Section \ref{sec:3-1}.
We define $V^{\mathbb{C}}$ and $A^{\mathbb{C}}$ to be the complexification of $V$ and $A$, respectively.
We note that, if $\lambda\in \mathbb{C}$ is an eigenvalue of $A^{\mathbb{C}}$, the complex conjugate $\overline{\lambda}$ is also an eigenvalue of $A^{\mathbb{C}}$.
The characteristic polynomial of $A^{\mathbb{C}}$ is denoted by $\varphi_{A^{\mathbb{C}}}$ and can be expressed as the following form:
\begin{equation*}
    \varphi_{A^{\mathbb{C}}}(t)
    =
    \prod_{i=1}^{a}(t-\lambda_{i})^{\mu_{i}}
    \cdot
    \prod_{j=1}^{b}
    (t-(\alpha_{j}+\beta_{j}\sqrt{-1}))^{\nu_{j}}
    (t-(\alpha_{j}-\beta_{j}\sqrt{-1}))^{\nu_{j}}
    ,
\end{equation*}
where $a$ and $b$ are non-negative integer, and where $\lambda_{i}, \alpha_{j}, \beta_{j}\in \mathbb{R}$; $\beta_{j}$ is positive for all $j$; $\lambda_{1}, \ldots, \lambda_{a}$ are the distinct real eigenvalues of $A^{\mathbb{C}}$; $\alpha_{1}+\beta_{1}\sqrt{-1}, \ldots, \alpha_{b}+\beta_{b}\sqrt{-1}$ are the distinct complex eigenvalues of $A^{\mathbb{C}}$ whose imaginary part is positive, and where $\{\mu_{i}\}_{i=1}^{a}$ and $\{\nu_{j}\}_{j=1}^{b}$ satisfy
\begin{equation*}
    \sum_{i=1}^{a}\mu_{i}
    +
    \sum_{j=1}^{b}2\nu_{j}
    =
    n
    .
\end{equation*}
The Jordan block of the diagonal element $\lambda\in \mathbb{C}$ and size $m$ is defined by
\begin{equation*}
    J_{m}(\lambda)
    :=
    \begin{pmatrix}
        \lambda & 1 & 0& \cdots & 0\\
        0 & \lambda & 1 &\ddots &\vdots\\
        \vdots & \ddots &\ddots &\ddots &0 \\
        \vdots &  & \ddots & \ddots& 1\\
        0&\cdots&\cdots&0&\lambda
    \end{pmatrix}
    .
\end{equation*}
\begin{proposition}[The Jordan Normal Form Theorem]
    \label{prop:Jordan}
    There exist the family $\{p_{i, k, u}\}$ of vectors of $V$ and the family $\{q_{j, l, v}\}$ of vectors of $V^{\mathbb{C}}$ such that
    \begin{equation*}
        \mathcal{B}:
        \left\{
            \left\{
                \left\{
                    p_{i, k, u}
                \right\}_{u=1}^{m_{i, k}}
            \right\}_{k=1}^{s_{i}}
        \right\}_{i=1}^{a}
        \cup
        \left\{
            \left\{
                \left\{
                    q_{j, l, v}
                \right\}_{v=1}^{n_{j, l}}
            \right\}_{l=1}^{t_{j}}
            \cup
            \left\{
                \left\{
                    \overline{q_{j, l, v}}
                \right\}_{v=1}^{n_{j, l}}
            \right\}_{l=1}^{t_{j}}
        \right\}_{j=1}^{b}
    \end{equation*}
    is an ordered basis of $V^{\mathbb{C}}$ and then the matrix representation of $A^{\mathbb{C}}$ with respect to $\mathcal{B}$ is
    \begin{align*}
        J:=
        \left\{
            \bigoplus_{i=1}^{a}
            \left(
                \bigoplus_{k=1}^{s_{i}}
                J_{m_{i, k}}(\lambda_{i})
            \right)
        \right\}
        \oplus
        \left[
            \bigoplus_{j=1}^{b}
            \left\{
                \left(
                    \bigoplus_{l=1}^{t_{j}}
                    J_{n_{j, l}}(\alpha_{j}+\beta_{j}\sqrt{-1})
                \right)
                \oplus
                \left(
                    \bigoplus_{l=1}^{t_{j}}
                    J_{n_{j, l}}(\alpha_{j}-\beta_{j}\sqrt{-1})
                \right)
            \right\}
        \right]
        ,
    \end{align*}
    where $s_{i}$ is a positive integer for all $i\in \{1,\ldots, a\}$, and where the sequence of positive integers $\{m_{i, k}\}_{k=1}^{s_{i}}$ satisfies
    \begin{equation*}
        1\leqq m_{i, 1}\leqq m_{i, 2}\leqq \cdots \leqq m_{i, s_{i}}\leqq \mu_{i}
        ,\quad
        \mu_{i}=\sum_{k=1}^{s_{i}}m_{i, k},
    \end{equation*}
    and where $t_{j}$ is a positive integer for all $j\in \{1, \ldots, b\}$, and where the sequence of positive integers $\{n_{j, l}\}_{l=1}^{t_{j}}$ satisfies
    \begin{equation*}
        1\leqq n_{j, 1}\leqq n_{j, 2}\leqq \cdots \leqq n_{j, t_{j}}\leqq \nu_{j}
        ,\quad
        \nu_{j}=\sum_{l=1}^{t_{j}}n_{j, l}.
    \end{equation*}
    Here, $s_{i}, \{m_{i, k}\}, t_{j}, \{n_{j, l}\}$ are uniquely determined by $A$.
    (The basis $\mathcal{B}$ is written in ascending order and the positions of the vectors of $\mathcal{B}$ correspond to those of the Jordan blocks of $J$.)
\end{proposition}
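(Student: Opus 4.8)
The plan is to deduce the statement from the classical Jordan normal form theorem over $\mathbb{C}$, applied to $A^{\mathbb{C}}\in\mathrm{End}(V^{\mathbb{C}})$, together with the extra structure coming from the fact that $A$ is a real operator. Concretely, let $\sigma\colon V^{\mathbb{C}}\to V^{\mathbb{C}}$ be the conjugate-linear involution fixing $V$; then $\sigma\circ A^{\mathbb{C}}=A^{\mathbb{C}}\circ\sigma$, and this commutation is what forces the Jordan data to assemble into the symmetric basis $\mathcal{B}$.

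First I would use the primary decomposition: because $\varphi_{A^{\mathbb{C}}}$ factors as displayed above, $V^{\mathbb{C}}$ is the direct sum of the generalized eigenspaces $K_{\lambda}:=\ker\bigl((A^{\mathbb{C}}-\lambda I)^{n}\bigr)$, taken over all complex eigenvalues $\lambda$ of $A^{\mathbb{C}}$, and each $K_{\lambda}$ is $A^{\mathbb{C}}$-invariant. Since $\sigma$ commutes with $A^{\mathbb{C}}$ and is conjugate-linear, $\sigma(K_{\lambda})=K_{\overline{\lambda}}$. For a real eigenvalue $\lambda_{i}$ the space $K_{\lambda_{i}}$ is therefore $\sigma$-stable, hence is the complexification of the real subspace $W_{i}:=K_{\lambda_{i}}\cap V=\ker\bigl((A-\lambda_{i}I)^{n}\bigr)\subseteq V$; on $W_{i}$ the operator $A-\lambda_{i}I$ is nilpotent, so the structure theorem for a single nilpotent endomorphism of a real vector space yields a basis of $W_{i}$ made of Jordan chains. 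Listing these chains in non-decreasing length gives the real vectors $\{p_{i,k,u}\}_{u=1}^{m_{i,k}}$, $k=1,\dots,s_{i}$, with $1\leqq m_{i,1}\leqq\cdots\leqq m_{i,s_{i}}\leqq\mu_{i}$ and $\mu_{i}=\sum_{k=1}^{s_{i}}m_{i,k}$.

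Next I would handle a complex eigenvalue $\alpha_{j}+\beta_{j}\sqrt{-1}$ with $\beta_{j}>0$. Applying the complex Jordan theorem to the restriction of $A^{\mathbb{C}}$ to $K_{\alpha_{j}+\beta_{j}\sqrt{-1}}$ produces Jordan chains $\{q_{j,l,v}\}_{v=1}^{n_{j,l}}$, $l=1,\dots,t_{j}$, which we again order by non-decreasing length so that $1\leqq n_{j,1}\leqq\cdots\leqq n_{j,t_{j}}\leqq\nu_{j}$ and $\nu_{j}=\sum_{l=1}^{t_{j}}n_{j,l}$. From $(A^{\mathbb{C}}-(\alpha_{j}+\beta_{j}\sqrt{-1})I)q_{j,l,v}=q_{j,l,v-1}$ (with $q_{j,l,0}:=0$) and $\sigma A^{\mathbb{C}}=A^{\mathbb{C}}\sigma$ we get $(A^{\mathbb{C}}-(\alpha_{j}-\beta_{j}\sqrt{-1})I)\overline{q_{j,l,v}}=\overline{q_{j,l,v-1}}$, and since $\sigma$ restricts to a bijection $K_{\alpha_{j}+\beta_{j}\sqrt{-1}}\to K_{\alpha_{j}-\beta_{j}\sqrt{-1}}$ the family $\{\overline{q_{j,l,v}}\}$ is a Jordan basis of the conjugate generalized eigenspace. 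Collecting the real chains over all $\lambda_{i}$ and the conjugate-paired complex chains over all $\alpha_{j}+\beta_{j}\sqrt{-1}$ gives, by the primary decomposition, an ordered basis $\mathcal{B}$ of $V^{\mathbb{C}}$, and by construction the matrix of $A^{\mathbb{C}}$ with respect to $\mathcal{B}$ is block diagonal with blocks $J_{m_{i,k}}(\lambda_{i})$ and $J_{n_{j,l}}(\alpha_{j}\pm\beta_{j}\sqrt{-1})$ arranged exactly as in $J$.

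For uniqueness, I would note that for any fixed eigenvalue $\lambda$ of $A^{\mathbb{C}}$ the number of Jordan chains of length $\geqq r$ equals $\dim_{\mathbb{C}}\ker\bigl((A^{\mathbb{C}}-\lambda I)^{r}\bigr)-\dim_{\mathbb{C}}\ker\bigl((A^{\mathbb{C}}-\lambda I)^{r-1}\bigr)$, a quantity that depends only on $A^{\mathbb{C}}$, hence only on $A$; differencing once more recovers the number of chains of each exact length. Applying this to $\lambda=\lambda_{i}$ and to $\lambda=\alpha_{j}\pm\beta_{j}\sqrt{-1}$ shows that $s_{i},\{m_{i,k}\},t_{j},\{n_{j,l}\}$ are uniquely determined by $A$. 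I do not expect a serious obstacle: the only point beyond the textbook complex Jordan theorem is the compatibility with the real form — that real generalized eigenspaces descend to $V$ and that the chains over $\overline{\lambda}$ may be taken to be the conjugates of those over $\lambda$ — and both follow at once from $\sigma A^{\mathbb{C}}=A^{\mathbb{C}}\sigma$; the remaining work is the bookkeeping of ordering chains by length and matching positions in $\mathcal{B}$ with positions of the blocks in $J$.
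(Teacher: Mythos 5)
Your sketch is correct: the primary decomposition of $V^{\mathbb{C}}$, the observation that the conjugation $\sigma$ commutes with $A^{\mathbb{C}}$ and hence swaps $K_{\lambda}$ with $K_{\overline{\lambda}}$ (so real generalized eigenspaces descend to $V$ and chains over $\overline{\lambda}$ can be taken as conjugates of chains over $\lambda$), and the kernel-dimension count for uniqueness together give a complete standard proof of the real-form Jordan theorem in exactly the shape stated. Note that the paper itself offers no proof of this proposition --- it is quoted as classical background underlying Petrov's classification theorem --- so there is no argument to compare against; your blind reconstruction is the textbook derivation the paper implicitly relies on, and I see no gap in it.
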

The matrix $J$ in Proposition \ref{prop:Jordan} is called the Jordan normal form of $A$.
Under the above setting, we now claim Petrov's classification theorem.
\begin{theorem}[Petrov's Classification Theorem]
    \label{theo:Petrov}
    Let $B$ be a non-degenerate symmetric bilinear form on $V$.
    Suppose that $A\in \mathrm{Sym}(V, B)$.
    Then there is a basis $\{e_{i}\}$ of $V$ such that the matrix representation $(A^{i}_{j})$ of $A$ and the matrix $(B(e_{i}, e_{j}))$ of $B$ with respect to $\{e_{i}\}$ are the following forms:
    \begin{align*}
        &(A^{i}_{j})
        =
        \left\{
            \bigoplus_{i=1}^{a}
            \left(
                \bigoplus_{k=1}^{s_{i}}
                J_{m_{i, k}}(\lambda_{i})
            \right)
        \right\}
        \oplus
        \left\{
            \bigoplus_{j=1}^{b}
            \left(
                \bigoplus_{l=1}^{t_{j}}
                C_{n_{j, l}}(j)
            \right)
        \right\},\\
        &(B(e_{i}, e_{j}))
        =
        \left\{
            \bigoplus_{i=1}^{a}
            \left(
                \bigoplus_{k=1}^{s_{i}}
                \mathcal{E}_{i, k}
            \right)
        \right\}
        \oplus
        \left\{
            \bigoplus_{j=1}^{b}
            \left(
                \bigoplus_{l=1}^{t_{j}}
                \widehat{\mathcal{E}}_{j, l}
            \right)
        \right\}
        ,
    \end{align*}
    where the above symbols $\mathcal{E}_{i, k}, C_{n_{j, l}}(j), \widehat{\mathcal{E}}_{j, l}$ are defined by the following matrices:
    \begin{align*}
        \mathcal{E}_{i, k}
        &:=
        \epsilon(i, k)E^{\mathfrak{a}}_{m_{i, k}}
        \in \mathbb{R}^{m_{i, k}\times m_{i, k}}
        ,\quad (\epsilon(i, k)\in \{-1, 1\}),\\
        C(j)
        &:=
        \begin{pmatrix}
            \alpha_{j}&-\beta_{j}\\
            \beta_{j}&\alpha_{j}
        \end{pmatrix}
        ,\quad
        C_{n_{j, l}}(j)
        :=
        \begin{bmatrix}
            C(j) & E_{2} & O_{2}& \cdots & O_{2}\\
            O_{2} & C(j) & E_{2} &\ddots &\vdots\\
            \vdots & \ddots &\ddots &\ddots &O_{2} \\
            \vdots &  & \ddots & \ddots& E_{2}\\
            O_{2}&\cdots&\cdots&O_{2}&C(j)
        \end{bmatrix}
        \in \mathbb{R}^{2n_{j, l}\times 2n_{j, l}}
        ,\\
        \mathfrak{E}
        &:=
        \begin{pmatrix}
            -1 & 0\\
            0 & 1
        \end{pmatrix}
        ,\quad
        \widehat{\mathcal{E}}_{j, l}
        :=
        \begin{bmatrix}
            O_{2}&\cdots&O_{2}&\mathfrak{E}\\
            \vdots&\iddots&\mathfrak{E}&O_{2}\\
            O_{2}&\iddots&\iddots&\vdots\\
            \mathfrak{E}&O_{2}&\cdots&O_{2}
        \end{bmatrix}
        \in \mathbb{R}^{2n_{j, l}\times 2n_{j, l}}.
    \end{align*}
    If $m_{i, k_{1}}=m_{i, k_{2}}$ for $k_{1}, k_{2}\in \{1, 2, \ldots, s_{i}\}$, we suppose that $\epsilon(i, k_{1})\geqq \epsilon(i, k_{2})$.
    Here, $A$ determines whether $\epsilon(i, k)$ is $1$ or $-1$ for $(i, k)$ uniquely.
\end{theorem}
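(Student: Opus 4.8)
The plan is to mimic the proof of the Jordan normal form itself: complexify, split off generalized eigenspaces---which for a $B$-self-adjoint $A$ turn out to be $B$-orthogonal---and then treat each primary piece by repeatedly peeling off a maximal cyclic (Jordan-chain) subspace on which $B$ restricts non-degenerately. Concretely, if $\lambda\neq\mu$ are eigenvalues of $A^{\mathbb{C}}$, then the generalized $\lambda$- and $\mu$-eigenspaces are $B^{\mathbb{C}}$-orthogonal: writing a vector of the first as $(A^{\mathbb{C}}-\mu)^{K}v'$ (possible since $A^{\mathbb{C}}-\mu$ is invertible there) and using self-adjointness makes the pairing vanish. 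Hence $V^{\mathbb{C}}$ decomposes $B^{\mathbb{C}}$-orthogonally into generalized eigenspaces; collecting each complex-conjugate pair and descending to $V$ gives a $B$-orthogonal, $A$-invariant decomposition $V=\bigoplus_{i}V_{\lambda_{i}}\oplus\bigoplus_{j}W_{j}$, where $V_{\lambda_{i}}$ is the real generalized $\lambda_{i}$-eigenspace and $W_{j}=V\cap\ker\bigl((A-\alpha_{j})^{2}+\beta_{j}^{2}\bigr)^{K}$, and on each summand $B$ is again non-degenerate. It then suffices to prove the statement in two cases: (I) $A-\lambda$ nilpotent with $\lambda\in\mathbb{R}$, and (II) $A$ whose characteristic polynomial is a power of $(t-\alpha)^{2}+\beta^{2}$ with $\beta>0$.

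For Case (I) I would put $N:=A-\lambda$ (nilpotent, $B$-self-adjoint) and induct on $\dim V$; the argument works equally over $\mathbb{R}$ and over $\mathbb{C}$, which is what lets me reuse it in Case (II). Self-adjointness gives $B(N^{a}v,N^{b}w)=B(N^{a+b}v,w)$, so if $m$ is the nilpotency index the symmetric form $(v,w)\mapsto B(N^{m-1}v,w)$ is non-zero, and polarization (characteristic $\neq 2$) yields $v$ with $B(N^{m-1}v,v)\neq 0$. The cyclic subspace $U:=\operatorname{span}\{v,Nv,\dots,N^{m-1}v\}$ is $N$-invariant of dimension $m$, and in the basis $\{N^{m-1}v,\dots,Nv,v\}$ the Gram matrix of $B|_{U}$ is anti-triangular with constant nonzero anti-diagonal $B(N^{m-1}v,v)$; hence $B|_{U}$ is non-degenerate, and a chain-preserving modification $v\mapsto v+c_{1}Nv+\cdots$ followed by a rescaling brings $B|_{U}$ to $\epsilon E^{\mathfrak{a}}_{m}$ --- with $\epsilon=\operatorname{sgn}B(N^{m-1}v,v)$ over $\mathbb{R}$, and $\epsilon=1$ over $\mathbb{C}$, where the anti-diagonal value may be normalized to $1$ --- while $N|_{U}$ stays a single nilpotent Jordan block. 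Then $V=U\oplus U^{\perp}$ with $U^{\perp}$ again $N$-invariant and $B|_{U^{\perp}}$ non-degenerate, so the inductive hypothesis applies to $U^{\perp}$; finally one permutes the blocks to meet the stated orderings.

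For Case (II) I would complexify: $V^{\mathbb{C}}=V_{+}\oplus V_{-}$ with $V_{\pm}$ the generalized $(\alpha\pm\beta\sqrt{-1})$-eigenspaces, which are $B^{\mathbb{C}}$-orthogonal, each non-degenerate, and interchanged by complex conjugation $\sigma$. The operator $N_{+}:=\bigl(A^{\mathbb{C}}-(\alpha+\beta\sqrt{-1})\bigr)|_{V_{+}}$ is nilpotent and self-adjoint for $B^{\mathbb{C}}|_{V_{+}}$, so Case (I) over $\mathbb{C}$ gives a basis $\{q_{l,v}\}$ of $V_{+}$ made of Jordan chains for $N_{+}$ in which $B^{\mathbb{C}}|_{V_{+}}=\bigoplus_{l}E^{\mathfrak{a}}_{n_{l}}$; then $\{\sigma q_{l,v}\}$ is a Jordan-chain basis of $V_{-}$ for $\sigma N_{+}\sigma^{-1}$ with $B^{\mathbb{C}}|_{V_{-}}=\bigoplus_{l}E^{\mathfrak{a}}_{n_{l}}$, while $B^{\mathbb{C}}(V_{+},V_{-})=0$. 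Passing to the real vectors $q_{l,v}+\sigma q_{l,v}$ and $-\sqrt{-1}(q_{l,v}-\sigma q_{l,v})$ and rescaling, a direct computation turns each diagonal $2\times 2$ slot of $A$ into a transpose of $C(j)$, each chain-link into $E_{2}$, and each $2\times 2$ slot of $B$ into $\operatorname{diag}(1,-1)$ along the chain's anti-diagonal (and $0$ elsewhere); after swapping the two real vectors of each pair this is exactly $A=\bigoplus_{j,l}C_{n_{j,l}}(j)$ and $B=\bigoplus_{j,l}\widehat{\mathcal{E}}_{j,l}$. In particular no sign invariant appears in Case (II), consistently with the statement, because over $\mathbb{C}$ the anti-diagonal of each chain was normalized to $1$.

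It remains to address uniqueness. The Jordan data $s_{i},\{m_{i,k}\},t_{j},\{n_{j,l}\}$ are determined by $A$ via Proposition~\ref{prop:Jordan}. For the signs, I would argue that, for each real eigenvalue $\lambda_{i}$ and each $m$, the number of size-$m$ blocks with $\epsilon=1$ minus the number with $\epsilon=-1$ equals the signature of the symmetric form induced by $(v,w)\mapsto B\bigl((A-\lambda_{i})^{m-1}v,w\bigr)$ on the suitable subquotient of $\ker(A-\lambda_{i})^{m}\cap\operatorname{im}(A-\lambda_{i})^{m-1}$; this signature depends only on the pair $(A,B)$, and together with the ordering convention it determines every $\epsilon(i,k)$. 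I expect the main obstacle to be twofold: the change-of-basis bookkeeping in Case (II), namely getting the signs, transposes and chain orderings to match $C_{n_{j,l}}(j)$ and $\widehat{\mathcal{E}}_{j,l}$ exactly; and pinning down the canonical form behind the signature argument for the $\epsilon(i,k)$, in particular checking that it is independent of the normal form used to evaluate it.
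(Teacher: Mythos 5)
Your construction is correct and is, in substance, the worked-out version of the paper's one-sentence outline: the paper only asserts that the Jordan-chain basis of Proposition~\ref{prop:Jordan} can be modified, chain by chain, into one adapted to $B$, and your $B$-orthogonal primary decomposition followed by peeling off non-degenerate cyclic subspaces is precisely how that modification is done. The existence part (Cases (I) and (II)) is complete, and the $2\times 2$ bookkeeping you flag does work out as you predict: the diagonal slots come out as ${}^{t}C(j)$, the slots of $B$ as $\mathrm{diag}(1,-1)$ on the anti-diagonal, and swapping the two real vectors in each pair fixes both simultaneously while preserving the $E_{2}$ chain links.

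The one concrete imprecision is in the uniqueness of the signs. The space you name, $\ker(A-\lambda_{i})^{m}\cap\operatorname{im}(A-\lambda_{i})^{m-1}$, is not the right ambient space: on a single block of size $m\geqq 2$ with chain $e_{1},\ldots,e_{m}$ (so $Ne_{k}=e_{k-1}$, $B(e_{u},e_{v})=\epsilon\,\delta_{u+v,m+1}$), that intersection is spanned by $e_{1}$, where $B(N^{m-1}e_{1},e_{1})=0$, so the induced form vanishes identically. The invariant that works is the signature of $(v,w)\mapsto B\bigl((A-\lambda_{i})^{m-1}v,w\bigr)$ restricted to $\ker(A-\lambda_{i})^{m}$; its radical contains $\ker(A-\lambda_{i})^{m-1}$, and evaluated on the normal form a block of size exactly $m$ contributes $\epsilon$ while blocks of every other size contribute $0$. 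Hence this signature equals the signed count of size-$m$ blocks for $\lambda_{i}$, is manifestly determined by $(A,B)$ alone, and together with the ordering convention pins down each $\epsilon(i,k)$. With that substitution your argument is a complete proof.
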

\begin{proof}[(Outline of the Proof of Theorem \ref{theo:Petrov})]
    We can take such a basis composed of Jordan chains by changing $\mathcal{B}$ in Proposition \ref{prop:Jordan} for each $i$ and $j$.
\end{proof}
In this paper, the pair $((A^{i}_{j}), (B(e_{i}, e_{j})))$ in Theorem \ref{theo:Petrov} is called the Petrov normal form of $(A, B)$.
The following Lemma \ref{lemm:1} is useful to compute the negative index of inertia of $B$.
\begin{lemma}
    \label{lemm:1}
    The number of negative eigenvalues of $\mathcal{E}_{i, k}$ for $(i, k)$ is
    \begin{equation*}
        \frac{1}{2}
        \left\{
            m_{i, k}
            +
            \frac{(-1)^{m_{i, k}}-1}{2}\epsilon(i, k)
        \right\}
        .
    \end{equation*}
    The number of negative eigenvalues of $\widehat{\mathcal{E}}_{j, l}$ for $(j, l)$ is $n_{j, l}$.
    Moreover, the negative index of inertia of $B$ is the sum of the numbers of negative eigenvalues of $\mathcal{E}_{i, k}$ and $\widehat{\mathcal{E}}_{j, l}$.
\end{lemma}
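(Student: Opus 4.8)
The plan is to reduce the whole statement to diagonalizing the two elementary blocks appearing in the Petrov normal form, and then to reassemble the count by Sylvester's law of inertia. First I would record that, by Theorem \ref{theo:Petrov}, the matrix $(B(e_{i}, e_{j}))$ is block-diagonal with diagonal blocks exactly the $\mathcal{E}_{i, k}$ and the $\widehat{\mathcal{E}}_{j, l}$. Hence $B$ is congruent to the orthogonal direct sum of the symmetric bilinear forms represented by those blocks; since the negative index of inertia is a congruence invariant that is additive over orthogonal direct sums and, for a symmetric matrix, equals the number of negative eigenvalues counted with multiplicity, the last sentence of the lemma follows immediately and the problem is reduced to counting negative eigenvalues of each $\mathcal{E}_{i, k}$ and each $\widehat{\mathcal{E}}_{j, l}$.

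Next I would handle the blocks $\mathcal{E}_{i, k} = \epsilon(i, k)E^{\mathfrak{a}}_{m_{i, k}}$. Writing $m := m_{i, k}$, the matrix $E^{\mathfrak{a}}_{m}$ is the involution exchanging $\bm{e}_{r}$ and $\bm{e}_{m+1-r}$, so the vectors $\bm{e}_{r} + \bm{e}_{m+1-r}$ span its $(+1)$-eigenspace and the vectors $\bm{e}_{r} - \bm{e}_{m+1-r}$ span its $(-1)$-eigenspace; counting gives $\lfloor m/2\rfloor$ negative eigenvalues and $\lceil m/2\rceil$ positive eigenvalues. Then I would check that $\lfloor m/2\rfloor = \tfrac{1}{2}\{m + \tfrac{(-1)^{m}-1}{2}\}$ and $\lceil m/2\rceil = \tfrac{1}{2}\{m - \tfrac{(-1)^{m}-1}{2}\}$, and observe that multiplying by $\epsilon(i, k)\in\{-1, 1\}$ leaves the two counts as they are when $\epsilon(i, k) = 1$ and interchanges them when $\epsilon(i, k) = -1$; inserting the factor $\epsilon(i, k)$ into the first of the two expressions yields exactly the claimed formula $\tfrac{1}{2}\{m_{i, k} + \tfrac{(-1)^{m_{i, k}}-1}{2}\epsilon(i, k)\}$ for the number of negative eigenvalues of $\mathcal{E}_{i, k}$ in both cases.

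Finally, for $\widehat{\mathcal{E}}_{j, l}$ I would set $n := n_{j, l}$ and use that $\mathfrak{E} = \mathrm{diag}(-1, 1)$ has vanishing off-diagonal entries, so the odd-indexed and even-indexed coordinates do not interact and a permutation of coordinates puts $\widehat{\mathcal{E}}_{j, l}$ into the form $(-E^{\mathfrak{a}}_{n})\oplus E^{\mathfrak{a}}_{n}$. By the previous paragraph this has $\lceil n/2\rceil + \lfloor n/2\rfloor = n = n_{j, l}$ negative eigenvalues, as claimed. (Equivalently, pairing the anti-diagonal $\mathfrak{E}$-block at position $p$ with the one at position $n+1-p$ produces $4\times 4$ sub-blocks of signature $(2, 2)$, together with a single $\mathfrak{E}$ of signature $(1, 1)$ when $n$ is odd, again totalling $n$.) Summing over all $(i, k)$ and $(j, l)$ and invoking Step 1 completes the argument. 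I do not expect a genuine obstacle here: the only delicate points are the sign bookkeeping that collapses the cases $\epsilon(i, k) = \pm 1$ into one closed formula and keeping straight which of $E^{\mathfrak{a}}_{n}$ and $-E^{\mathfrak{a}}_{n}$ contributes the floor and which the ceiling.
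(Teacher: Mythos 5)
Your proposal is correct, and since the paper's own proof of Lemma \ref{lemm:1} is just the remark that it ``follows from direct computation,'' your argument is exactly the computation being alluded to, carried out in full: diagonalize $E^{\mathfrak{a}}_{m}$ via the vectors $\bm{e}_{r}\pm\bm{e}_{m+1-r}$, track the swap of positive and negative counts under the sign $\epsilon(i,k)$, split $\widehat{\mathcal{E}}_{j,l}$ into $(-E^{\mathfrak{a}}_{n})\oplus E^{\mathfrak{a}}_{n}$, and add up by Sylvester's law of inertia. All the individual verifications (the floor/ceiling identities and the permutation argument) check out, so there is nothing to add.
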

\begin{proof}
    Lemma \ref{lemm:1} follows from direct computation.
\end{proof}

\subsection{A Classification of Isoparametric Hypersurfaces of Index 1}\label{sec:3-2}
Let $M$ be an $n$-dimensional non-degenerate connected hypersurface of index 1 in the pseudo-Riemannian space form.
Let $A$ be the shape operator of $M$.
Since $A_{x}\in \mathrm{Sym}(T_{x}M, \langle\cdot,\cdot\rangle_{x})$, we have the Petrov normal form of $(A_{x}, \langle\cdot, \cdot\rangle_{x})$ by Theorem \ref{theo:Petrov}.
By Lemma \ref{lemm:1}, $(A_{x}, \langle\cdot, \cdot\rangle_{x})$ can be put into one of the following 5 types algebraically:
\begin{align*}
    &\text{I} :&
    A_{x}&\sim
    \begin{bmatrix}
        a_{0}&&&\\
        &a_{1}&&\\
        &&\ddots&\\
        &&&a_{n-1}
    \end{bmatrix},
    &&\langle\cdot,\cdot\rangle_{x}\sim
    \begin{bmatrix}
        -1&&&\\
        &1&&\\
        &&\ddots&\\
        &&&1
    \end{bmatrix},\\
    &\text{II} :&
    A_{x}&\sim
    \begin{bmatrix}
        b&1&&&\\
        0&b&&&\\
        &&a_{1}&&\\
        &&&\ddots&\\
        &&&&a_{n-2}
    \end{bmatrix},
    &&\langle\cdot,\cdot\rangle_{x}\sim
    \begin{bmatrix}
        0&\varepsilon&&&\\
        \varepsilon&0&&&\\
        &&1&&\\
        &&&\ddots&\\
        &&&&1
    \end{bmatrix},
    \\
    &\text{III} :&
    A_{x}&\sim
    \begin{bmatrix}
        b&1&0&&&\\
        0&b&1&&&\\
        0&0&b&&&\\
        &&&a_{1}&&\\
        &&&&\ddots&\\
        &&&&&a_{n-3}\\
    \end{bmatrix},
    &&\langle\cdot,\cdot\rangle_{x}\sim
    \begin{bmatrix}
        0&0&1&&&\\
        0&1&0&&&\\
        1&0&0&&&\\
        &&&1&&\\
        &&&&\ddots&\\
        &&&&&1\\
    \end{bmatrix},
    \\
    &\text{IV} :&
    A_{x}&\sim
    \begin{bmatrix}
        \alpha&-\beta&&&\\
        \beta&\alpha&&&\\
        &&a_{1}&&\\
        &&&\ddots&\\
        &&&&a_{n-2}
    \end{bmatrix},
    &&\langle\cdot,\cdot\rangle_{x}\sim
    \begin{bmatrix}
        -1&0&&&\\
        0&1&&&\\
        &&1&&\\
        &&&\ddots&\\
        &&&&1
    \end{bmatrix},
\end{align*}
where $a_{1}, a_{2}, \ldots, a_{n-1}, a_{0}, b ,\alpha, \beta\in \mathbb{R}$ and $\beta>0$, and where $\varepsilon\in \{-1, 1\}$.
We note that we distinguish between type II where $\varepsilon=1$ and type II where $\varepsilon=-1$ algebraically.
The above forms are different from the forms in Theorem \ref{theo:Petrov} since we rearrange the bases in Theorem \ref{theo:Petrov} such that the part concerning indices is located at the top position in the bases for convenience of explanation.
Although the above forms are slightly different from \cite[p.\,166--167]{MR783023}, we can modify them.
Let the left bases in \eqref{eq:bases1} be bases of the above forms.
\begin{equation}
    \begin{array}{lclcl}
        \text{II } (\varepsilon=1)&\text{:}&X_{1}, X_{2}, Y_{1}, \ldots, Y_{n-2}&\xrightarrow{\xi\to-\xi}&-X_{2}, X_{1}, Y_{1}, \ldots, Y_{n-2},\\
        \text{II } (\varepsilon=-1)&\text{:}&X_{1}, X_{2}, Y_{1}, \ldots, Y_{n-2}&\longrightarrow&X_{2}, X_{1}, Y_{1}, \ldots, Y_{n-2},\\
        \text{III}&\text{:}&X_{1}, X_{2}, X_{3}, Y_{1}, \ldots, Y_{n-3}&\longrightarrow&-X_{3}, X_{1}, X_{2}, Y_{1}, \ldots, Y_{n-3}.
    \end{array}
    \label{eq:bases1}
\end{equation}
The right bases in \eqref{eq:bases1} are bases of the forms in \cite[p.\,166--167]{MR783023}.
By considering the direction of the unit normal vectors at $x$, we can reduce the above 5 types to 4 types geometrically.
Since Theorem \ref{theo:Petrov} is a pointwise claim, $(A, \langle\cdot,\cdot\rangle)$ can have multiple types on $M$.
One of the examples is Example \ref{exam:0-1}.
\begin{example}\label{exam:0-1}
    There exists an isoparametric hypersurface whose $(A, \langle\cdot,\cdot\rangle)$ has 2 types.
    We refer to \cite[{2.5. \textit{Example}}]{MR753432}.
    Let
    \begin{equation*}
        \bm{a}_{1}
        :=
        \frac{\bm{e}_{1}+\bm{e}_{2}}{\sqrt{2}}
        ,
        \quad
        \bm{a}_{2}
        :=
        \frac{\bm{e}_{1}-\bm{e}_{2}}{\sqrt{2}}
        ,
    \end{equation*}
    where $\bm{e}_{1}, \bm{e}_{2}\in \mathbb{R}^{3}$.
    Let $f : \mathbb{R}^{2}\to \mathbb{R}^{3}_{1}$ be the mapping
    \begin{equation*}
        f(\bm{p}):=u\bm{a}_{1}+v\bm{a}_{2}-(\sin{v})\bm{e}_{3},\quad \bm{p}=(u, v)\in \mathbb{R}^{2},
    \end{equation*}
    where $\bm{e}_{3}\in \mathbb{R}^{3}$.
    Let $M:=f(\mathbb{R}^{2})\subset \mathbb{R}^{3}_{1}$.
    The shape operator $A_{f(\bm{p})}$ at $\bm{p}\in \mathbb{R}^{2}$ satisfies
    \begin{equation*}
        \xi_{f(\bm{p})}
        =
        \begin{pmatrix}
            {\displaystyle \frac{\cos{v}}{\sqrt{2}}}\vspace{1mm}\\
            {\displaystyle \frac{\cos{v}}{\sqrt{2}}}\\
            -1
        \end{pmatrix}
        ,\quad
        \mathcal{B}_{\bm{p}}
        :=
        \begin{bmatrix}
            {\displaystyle\frac{\partial f}{\partial u}(\bm{p})}&{\displaystyle\frac{\partial f}{\partial v}(\bm{p})}
        \end{bmatrix}
        ,\quad
        A_{f(\bm{p})}\mathcal{B}_{\bm{p}}
        =
        \mathcal{B}_{\bm{p}}
        \begin{pmatrix}
            0 & \sin{v}\\
            0 & 0
        \end{pmatrix}
        .
    \end{equation*}
    Thus, $M$ is isoparametric.
    By Table \ref{tab:3}, the type of $(A, \langle\cdot,\cdot\rangle)$ depends on $v$.
    \begin{table}[h]
        \begin{tabular}{|l|l|}\hline
            The value of $v$&The type\\\hline\hline
            ${\displaystyle v\in \bigcup_{n\text{: integer}}\{n\pi\}}$&type I of index 1\\\hline
            ${\displaystyle v\in \bigcup_{n\text{: integer}}(2n\pi, (2n+1)\pi)}$&type II of index 1 ($\varepsilon=-1$)\\\hline
            ${\displaystyle v\in \bigcup_{n\text{: integer}}((2n+1)\pi, (2n+2)\pi)}$&type II of index 1 ($\varepsilon=1$)\\\hline
        \end{tabular}
        \caption{The type of $(A, \langle\cdot,\cdot\rangle)$}\label{tab:3}
    \end{table}
\end{example}
In \cite{MR783023}, we have discussed the existence of a local frame field such that both the shape operator and the metric are forms in Theorem \ref{theo:Petrov} since Theorem \ref{theo:Petrov} is a pointwise claim.
For our latter discussions in Section \ref{sec:4-3}, we now guarantee the existence of such a local frame field.
\begin{proposition}
    \label{prop:1}
    Let $M$ be a non-degenerate hypersurface in the pseudo-Riemannian space form.
    Suppose that $M$ is connected and the Jordan normal form of $A$ is constant on $M$.
    Then there exists a local frame field $\{e_{i}\}$ on an open neighborhood $U_{x}$ of any $x\in M$ such that the matrix representation of $A$ with respect to $\{e_{i}\}$ and the matrix of $\langle\cdot,\cdot\rangle$ relative to $\{e_{i}\}$ correspond with a Petrov normal form on $U_{x}$.
    Moreover, $(A, \langle\cdot,\cdot\rangle)$ has only one algebraic type of Petrov normal forms on $M$.
\end{proposition}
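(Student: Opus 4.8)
The plan is to make the construction underlying Theorem \ref{theo:Petrov} vary smoothly with the point, the hypothesis being used to force every relevant subspace to have locally constant dimension; the hypersurface structure itself plays no role beyond supplying a smooth field of self-adjoint endomorphisms. Since the Jordan normal form of $A$ is constant on the connected manifold $M$, all eigenvalues of $A^{\mathbb{C}}$ and all block sizes are constant, so $\mathrm{rk}(A-\lambda_{i}I)^{k}$ and $\mathrm{rk}(A^{\mathbb{C}}-(\alpha_{j}\pm\beta_{j}\sqrt{-1})I)^{k}$ are constant for every $k$; writing the spectral projections as fixed polynomials in $A$ (Lagrange interpolation at the constant eigenvalues) one sees that the generalized eigenbundles of $A$, together with the real subbundles obtained by combining each conjugate pair of complex generalized eigenbundles, are smooth subbundles of $TM$ of constant rank. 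Moreover, because $A_{x}\in\mathrm{Sym}(T_{x}M,\langle\cdot,\cdot\rangle_{x})$ for all $x$, the usual argument (move $(A-z)^{k}$ across $\langle\cdot,\cdot\rangle$, extended $\mathbb{C}$-bilinearly to $TM^{\mathbb{C}}$, and use that $A-z$ is invertible off its eigenvalue) shows that distinct generalized eigenspaces are $\langle\cdot,\cdot\rangle$-orthogonal and that $W_{j}^{+}\perp W_{j}^{-}$; hence $TM$ is the $\langle\cdot,\cdot\rangle$-orthogonal direct sum of these subbundles, with $\langle\cdot,\cdot\rangle$ non-degenerate on each. This reduces the problem to a single real block $V_{i}=\ker(A-\lambda_{i}I)^{\mu_{i}}$ and a single complex block $W_{j}^{+}$, the block $W_{j}^{-}$ and the return to $TM$ being obtained by complex conjugation and by taking real and imaginary parts of the chain vectors.

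On the real block, put $N:=(A-\lambda_{i}I)|_{V_{i}}$, a smooth field of nilpotent endomorphisms of constant Jordan type, and I would construct a smooth local frame of $V_{i}$ that is a union of Jordan chains for $N$. The key observation is that every subspace entering the classical algorithm for a Jordan basis — the kernels $\ker N^{k}$, the images $\mathrm{im}\,N^{k}$, and the particular direct sums of these with the spans of the chain vectors already built — has dimension expressible through the constants $\mathrm{rk}\,N^{k}$, hence is a smooth subbundle, and a smooth complement of a smooth subbundle inside another smooth subbundle always exists on a sufficiently small neighborhood $U_{x}$ of $x$ (extend a frame). Running the downward induction on chain length while making all choices smoothly in this way produces the required smooth Jordan frame; the complex block $W_{j}^{+}$ is handled in the same way over $\mathbb{C}$.

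With a smooth Jordan frame in hand, I would carry out the normalization indicated in the outline of Theorem \ref{theo:Petrov}. In a real block, a chain of length $m$ with generator $v$ satisfies $\langle v,N^{m-1}v\rangle\neq0$ by non-degeneracy of $\langle\cdot,\cdot\rangle$ on $V_{i}$, so $\mathrm{sgn}\langle v,N^{m-1}v\rangle$ is a continuous $\{-1,1\}$-valued function — hence constant on the connected $U_{x}$ — which is exactly $\epsilon(i,k)$, and rescaling $v$ by the smooth nowhere-zero factor $|\langle v,N^{m-1}v\rangle|^{-1/2}$ brings that chain to its $\mathcal{E}_{i,k}$-form; the remaining cross-terms between chains are cleared by the same smooth linear-algebra steps, applied after the locally constant reordering dictated by the convention $\epsilon(i,k_{1})\geqq\epsilon(i,k_{2})$ for $m_{i,k_{1}}=m_{i,k_{2}}$. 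In a complex block the analogous procedure (now involving $\beta_{j}$, hence still smooth) yields the $C_{n_{j,l}}(j)$ blocks, and taking real and imaginary parts of the chain vectors in $W_{j}^{+}\oplus W_{j}^{-}$ produces the $\widehat{\mathcal{E}}_{j,l}$ pattern — with no signs appearing, since over $\mathbb{C}$ every nonzero scalar is a square. Assembling the blocks over all $i$ and $j$ gives a smooth frame $\{e_{i}\}$ on $U_{x}$ relative to which the matrices of $A$ and of $\langle\cdot,\cdot\rangle$ are constant and form a Petrov normal form.

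For the final assertion, the algebraic type of the Petrov normal form is precisely the discrete data $(\{m_{i,k}\},\{n_{j,l}\},\{\epsilon(i,k)\})$ attached to the Jordan structure of $A$; by Theorem \ref{theo:Petrov} this data is determined by the pair $(A_{x},\langle\cdot,\cdot\rangle_{x})$ alone, so it does not depend on the chosen frame, and by the construction it is constant on each $U_{x}$, hence locally constant on $M$; since $M$ is connected it is constant. I expect the main obstacle to be the smooth Jordan-basis step of the second paragraph: one must verify carefully that every bundle entering the classical algorithm genuinely has locally constant rank, and this is exactly where the hypothesis that the Jordan normal form of $A$ is constant is essential — for a merely isoparametric $M$ the statement fails, as Example \ref{exam:0-1} shows.
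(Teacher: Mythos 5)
Your proposal is correct and follows essentially the same route as the paper, whose own proof is only a two-sentence assertion that Theorem \ref{theo:Petrov} ``applies to a coordinate neighborhood'' because the Jordan normal form is constant, and that connectedness forces a single algebraic type; you supply the actual content of that assertion (constant-rank generalized eigenbundles via polynomial spectral projections, smooth Jordan chains, local constancy of the signs $\epsilon(i,k)$ by continuity). The only point worth tightening is the claim that $\langle v, N^{m-1}v\rangle\neq 0$ for a chain generator $v$: when several chains share the maximal length this holds only after a suitable change of generators (diagonalizing the form $(u,w)\mapsto\langle u,N^{m-1}w\rangle$ on the top layer), a step your ``remaining cross-terms are cleared'' remark implicitly covers but should make explicit.
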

\begin{proof}
    For $x\in M$, we can consider that Theorem \ref{theo:Petrov} applies to a coordinate neighborhood $U_{x}$ around $x$ since the Jordan normal form of $A$ is constant on $M$.
    For all $y\in U_{x}$, the algebraic type of Petrov normal forms of $(A_{y}, \langle\cdot,\cdot\rangle_{y})$ corresponds with the one of $(A_{x}, \langle\cdot,\cdot\rangle_{x})$.
    Then we obtain a local frame field $\{e_{i}\}$ on $U_{x}$ which satisfies both $A$ and $\langle\cdot,\cdot\rangle$ are the forms of its type.

    The latter half of Proposition \ref{prop:1} follows from the connectedness of $M$.
\end{proof}
If $M$ is of index $1$, the minimal polynomials of $A_{x}$ are the following:
\begin{align*}
    \begin{array}{lcllcl}
        \text{I}&\text{:}&(t-a_{0})\cdots (t-a_{n-1}),&\text{II}&\text{:}&(t-b)^{2}(t-a_{1})\cdots (t-a_{n-2}),\\
        \text{III}&\text{:}&(t-b)^{3}(t-a_{1})\cdots (t-a_{n-3}),&\text{IV}&\text{:}&\{(t-\alpha)^{2}+\beta^{2}\}(t-a_{1})\cdots (t-a_{n-2}).
    \end{array}
\end{align*}
Thus, the minimal polynomial of $A$ is constant if and only if the Jordan normal form of $A$ is constant.
Therefore, if the minimal polynomial of $A$ is constant, we can define 4 types I, II, III, and IV of isoparametric hypersurfaces of index 1 by Proposition \ref{prop:1}.
In this paper, we name these types the Petrov types of index 1.

\subsection{A Classification of Isoparametric Hypersurfaces of Index 2}\label{sec:3-3}
Let $M$ be an $n$-dimensional non-degenerate connected hypersurface of index 2 in the pseudo-Riemannian space form.
Let $A$ be the shape operator of $M$.
We refer to \cite[p.\,170]{Upadhyay}.
Similarly to the case of index 1, $(A_{x}, \langle\cdot, \cdot\rangle_{x})$ can be put into one of the following 19 types algebraically by Theorem \ref{theo:Petrov} and Lemma \ref{lemm:1}:
\begin{align*}
    &\text{I} :&
    A_{x}&\sim
    \begin{bmatrix}
        \alpha&-\beta&1&0&&&\\
        \beta&\alpha&0&1&&&\\
        0&0&\alpha&-\beta&&&\\
        0&0&\beta&\alpha&&&\\
        &&&&a_{1}&&\\
        &&&&&\ddots&\\
        &&&&&&a_{n-4}
    \end{bmatrix},
    &&\langle\cdot,\cdot\rangle_{x}\sim
    \begin{bmatrix}
        0&0&-1&0&&&\\
        0&0&0&1&&&\\
        -1&0&0&0&&&\\
        0&1&0&0&&&\\
        &&&&1&&\\
        &&&&&\ddots&\\
        &&&&&&1
    \end{bmatrix},\\
    &\text{II} :&
    A_{x}&\sim
    \begin{bmatrix}
        \alpha_{1}&-\beta_{1}&0&0&&&\\
        \beta_{1}&\alpha_{1}&0&0&&&\\
        0&0&\alpha_{2}&-\beta_{2}&&&\\
        0&0&\beta_{2}&\alpha_{2}&&&\\
        &&&&a_{1}&&\\
        &&&&&\ddots&\\
        &&&&&&a_{n-4}
    \end{bmatrix},
    &&\langle\cdot,\cdot\rangle_{x}\sim
    \begin{bmatrix}
        -1&0&0&0&&&\\
        0&1&0&0&&&\\
        0&0&-1&0&&&\\
        0&0&0&1&&&\\
        &&&&1&&\\
        &&&&&\ddots&\\
        &&&&&&1
    \end{bmatrix},\\
    &\text{III} :&
    A_{x}&\sim
    \begin{bmatrix}
        \alpha&-\beta&&&&\\
        \beta&\alpha&&&&\\
        &&a_{0}&&\\
        &&&a_{1}&&\\
        &&&&\ddots&\\
        &&&&&a_{n-3}
    \end{bmatrix},
    &&\langle\cdot,\cdot\rangle_{x}\sim
    \begin{bmatrix}
        -1&0&&&&\\
        0&1&&&&\\
        &&-1&&\\
        &&&1&&\\
        &&&&\ddots&\\
        &&&&&1
    \end{bmatrix},\\
    &\text{IV} :&
    A_{x}&\sim
    \begin{bmatrix}
        \alpha&-\beta&&&&&\\
        \beta&\alpha&&&&&\\
        &&b&1&&\\
        &&0&b&&&\\
        &&&&a_{1}&&\\
        &&&&&\ddots&\\
        &&&&&&a_{n-4}
    \end{bmatrix},
    &&\langle\cdot,\cdot\rangle_{x}\sim
    \begin{bmatrix}
        -1&0&&&&&\\
        0&1&&&&&\\
        &&0&\varepsilon&&\\
        &&\varepsilon&0&&&\\
        &&&&1&&\\
        &&&&&\ddots&\\
        &&&&&&1
    \end{bmatrix},\\
    &\text{V} :&
    A_{x}&\sim
    \begin{bmatrix}
        \alpha&-\beta&&&&&&\\
        \beta&\alpha&&&&&&\\
        &&b&1&0&&\\
        &&0&b&1&&&\\
        &&0&0&b&&&\\
        &&&&&a_{1}&&\\
        &&&&&&\ddots&\\
        &&&&&&&a_{n-5}
    \end{bmatrix},
    &&\langle\cdot,\cdot\rangle_{x}\sim
    \begin{bmatrix}
        -1&0&&&&&&\\
        0&1&&&&&&\\
        &&0&0&1&&\\
        &&0&1&0&&&\\
        &&1&0&0&&&\\
        &&&&&1&&\\
        &&&&&&\ddots&\\
        &&&&&&&1
    \end{bmatrix},\\
    &\text{VI} :&
    A_{x}&\sim
    \begin{bmatrix}
        b&1&0&0&&&\\
        0&b&1&0&&&\\
        0&0&b&1&&&\\
        0&0&0&b&&&\\
        &&&&a_{1}&&\\
        &&&&&\ddots&\\
        &&&&&&a_{n-4}
    \end{bmatrix},
    &&\langle\cdot,\cdot\rangle_{x}\sim
    \begin{bmatrix}
        0&0&0&\varepsilon&&&\\
        0&0&\varepsilon&0&&&\\
        0&\varepsilon&0&0&&&\\
        \varepsilon&0&0&0&&&\\
        &&&&1&&\\
        &&&&&\ddots&\\
        &&&&&&1
    \end{bmatrix},\\
    &\text{VII-i} :&
    A_{x}&\sim
    \begin{bmatrix}
        b&1&0&&&\\
        0&b&1&&&\\
        0&0&b&&&\\
        &&&a_{1}&&\\
        &&&&\ddots&\\
        &&&&&a_{n-3}
    \end{bmatrix},
    &&\langle\cdot,\cdot\rangle_{x}\sim
    \begin{bmatrix}
        0&0&-1&&&\\
        0&-1&0&&&\\
        -1&0&0&&&\\
        &&&1&&\\
        &&&&\ddots&\\
        &&&&&1
    \end{bmatrix},\\
    &\text{VII-ii} :&
    A_{x}&\sim
    \begin{bmatrix}
        b&1&0&&&&\\
        0&b&1&&&&\\
        0&0&b&&&&\\
        &&&a_{0}&&&\\
        &&&&a_{1}&&\\
        &&&&&\ddots&\\
        &&&&&&a_{n-4}
    \end{bmatrix},
    &&\langle\cdot,\cdot\rangle_{x}\sim
    \begin{bmatrix}
        0&0&1&&&&\\
        0&1&0&&&&\\
        1&0&0&&&&\\
        &&&-1&&&\\
        &&&&1&&\\
        &&&&&\ddots&\\
        &&&&&&1
    \end{bmatrix},\\
    &\text{VIII} :&
    A_{x}&\sim
    \begin{bmatrix}
        b_{1}&1&0&&&&&\\
        0&b_{1}&1&&&&&\\
        0&0&b_{1}&&&&&\\
        &&&b_{2}&1&&&\\
        &&&0&b_{2}&&&\\
        &&&&&a_{1}&&\\
        &&&&&&\ddots&\\
        &&&&&&&a_{n-5}
    \end{bmatrix},
    &&\langle\cdot,\cdot\rangle_{x}\sim
    \begin{bmatrix}
        0&0&1&&&&&\\
        0&1&0&&&&&\\
        1&0&0&&&&&\\
        &&&0&\varepsilon&&&\\
        &&&\varepsilon&0&&&\\
        &&&&&1&&\\
        &&&&&&\ddots&\\
        &&&&&&&1
    \end{bmatrix},\\
    &\text{IX-i} :&
    A_{x}&\sim
    \begin{bmatrix}
        b_{1}&1&&&&&\\
        0&b_{1}&&&&&\\
        &&b_{2}&1&&&\\
        &&0&b_{2}&&&\\
        &&&&a_{1}&&\\
        &&&&&\ddots&\\
        &&&&&&a_{n-4}
    \end{bmatrix},
    &&\langle\cdot,\cdot\rangle_{x}\sim
    \begin{bmatrix}
        0&\varepsilon&&&&&\\
        \varepsilon&0&&&&&\\
        &&0&\varepsilon&&&\\
        &&\varepsilon&0&&&\\
        &&&&1&&\\
        &&&&&\ddots&\\
        &&&&&&1
    \end{bmatrix},\\
    &\text{IX-ii} :&
    A_{x}&\sim
    \begin{bmatrix}
        b_{1}&1&&&&&\\
        0&b_{1}&&&&&\\
        &&b_{2}&1&&&\\
        &&0&b_{2}&&&\\
        &&&&a_{1}&&\\
        &&&&&\ddots&\\
        &&&&&&a_{n-4}
    \end{bmatrix},
    &&\langle\cdot,\cdot\rangle_{x}\sim
    \begin{bmatrix}
        0&\varepsilon&&&&&\\
        \varepsilon&0&&&&&\\
        &&0&-\varepsilon&&&\\
        &&-\varepsilon&0&&&\\
        &&&&1&&\\
        &&&&&\ddots&\\
        &&&&&&1
    \end{bmatrix},\\
    &\text{X} :&
    A_{x}&\sim
    \begin{bmatrix}
        b&1&&&&\\
        0&b&&&&\\
        &&a_{0}&&&\\
        &&&a_{1}&&\\
        &&&&\ddots&\\
        &&&&&a_{n-3}
    \end{bmatrix},
    &&\langle\cdot,\cdot\rangle_{x}\sim
    \begin{bmatrix}
        0&\varepsilon&&&&\\
        \varepsilon&0&&&&\\
        &&-1&&&\\
        &&&1&&\\
        &&&&\ddots&\\
        &&&&&1
    \end{bmatrix},\\
    &\text{XI} :&
    A_{x}&\sim
    \begin{bmatrix}
        a_{-1}&&&&\\
        &a_{0}&&&\\
        &&a_{1}&&\\
        &&&\ddots&\\
        &&&&a_{n-2}
    \end{bmatrix},
    &&\langle\cdot,\cdot\rangle_{x}\sim
    \begin{bmatrix}
        -1&&&&\\
        &-1&&&\\
        &&1&&\\
        &&&\ddots&\\
        &&&&1
    \end{bmatrix},
\end{align*}
where $a_{1}, a_{2}, \ldots, a_{n-2}, a_{0}, a_{-1}, b, b_{1}, b_{2}, \alpha, \beta, \alpha_{1}, \beta_{1}, \alpha_{2}, \beta_{2}\in \mathbb{R}$, and where $\beta, \beta_{1}, \beta_{2}>0$, and where $\varepsilon\in \{-1, 1\}$.
We note that we distinguish between types where $\varepsilon=1$ and types where $\varepsilon=-1$ algebraically.
Similarly to the case of index 1, the above forms are different from the forms in Theorem \ref{theo:Petrov}.
In types IV, VI, VIII, IX-i, IX-ii, and X of index 2, we can select the signs.
Moreover, we can modify the bases to reverse the signs.
Let the left bases in \eqref{eq:bases2} be bases of the above forms.
\begin{equation}
    \begin{array}{lclcl}
        \text{IV}&\text{:}&C_{1}, C_{2}, X_{1}, X_{2}, Y_{1}, \ldots, Y_{n-4}&\xrightarrow{\xi\to-\xi}&-C_{1}, C_{2}, -X_{1}, X_{2}, Y_{1}, \ldots, Y_{n-4},\\
        \text{VI}&\text{:}&X_{1}, X_{2}, X_{3}, X_{4}, Y_{1}, \ldots, Y_{n-4}&\xrightarrow{\xi\to-\xi}&-X_{1}, X_{2}, -X_{3}, X_{4}, Y_{1}, \ldots, Y_{n-4},\\
        \text{VIII}&\text{:}&X_{1}, X_{2}, X_{3}, Y_{1}, Y_{2}, Z_{1}, \ldots, Z_{n-5}&\xrightarrow{\xi\to-\xi}&-X_{1}, X_{2}, -X_{3}, -Y_{1}, Y_{2}, Z_{1}, \ldots, Z_{n-5},\\
        \text{IX}&\text{:}&X_{1}, X_{2}, Y_{1}, Y_{2}, Z_{1}, \ldots, Z_{n-4}&\xrightarrow{\xi\to-\xi}&-X_{1}, X_{2}, -Y_{1}, Y_{2}, Z_{1}, \ldots, Z_{n-4},\\
        \text{X}&\text{:}&X_{1}, X_{2}, Y, Z_{1}, \ldots, Z_{n-3} &\xrightarrow{\xi\to-\xi}&-X_{1}, X_{2}, Y, Z_{1}, \ldots, Z_{n-3}.
    \end{array}
    \label{eq:bases2}
\end{equation}
The right bases in \eqref{eq:bases2} are bases of the above forms where the sign of $\varepsilon$ is the opposite.
By considering the direction of the unit normal vectors at $x$, we can reduce the above 19 types to 13 types geometrically.
Since Theorem \ref{theo:Petrov} is a pointwise claim, $(A, \langle\cdot,\cdot\rangle)$ can have multiple types on $M$.
One of the examples is Example \ref{exam:0-2}.
\begin{example}\label{exam:0-2}
    There exists an isoparametric hypersurface whose $(A, \langle\cdot,\cdot\rangle)$ has 3 types and the minimal polynomial of $A$ is constant on it.
    Let
    \begin{equation*}
        \bm{a}_{1}
        :=
        \frac{\bm{e}_{1}+\bm{e}_{3}}{\sqrt{2}}
        ,
        \quad
        \bm{a}_{2}
        :=
        \frac{\bm{e}_{1}-\bm{e}_{3}}{\sqrt{2}}
        ,
        \quad
        \bm{a}_{3}
        :=
        \frac{\bm{e}_{2}+\bm{e}_{4}}{\sqrt{2}}
        ,
        \quad
        \bm{a}_{4}
        :=
        \frac{\bm{e}_{2}-\bm{e}_{4}}{\sqrt{2}}
    \end{equation*}
    where $\bm{e}_{1}, \bm{e}_{2}, \bm{e}_{3}, \bm{e}_{4}\in \mathbb{R}^{5}$.
    Let $f : \mathbb{R}^{4}\to \mathbb{R}^{5}_{2}$ be the mapping
    \begin{equation*}
        f(\bm{p}):=x\bm{a}_{1}+y\bm{a}_{2}+z\bm{a}_{3}+w\bm{a}_{4}+\left(\frac{y^{2}}{2}-\sin{w}\right)\bm{e}_{5},\quad \bm{p}=(x, y, z, w)\in \mathbb{R}^{4},
    \end{equation*}
    where $\bm{e}_{5}\in \mathbb{R}^{5}$.
    Let $M:=f(\mathbb{R}^{4})$.
    The shape operator $A_{f(\bm{p})}$ at $\bm{p}\in \mathbb{R}^{4}$ satisfies
    \begin{equation*}
        \xi_{f(\bm{p})}
        =
        \begin{pmatrix}
            {\displaystyle -\frac{y}{\sqrt{2}}}\vspace{1mm}\\
            {\displaystyle \frac{\cos{w}}{\sqrt{2}}}\vspace{1mm}\\
            {\displaystyle -\frac{y}{\sqrt{2}}}\vspace{1mm}\\
            {\displaystyle \frac{\cos{w}}{\sqrt{2}}}\vspace{1mm}\\
            -1
        \end{pmatrix}
        ,\quad
        \mathcal{B}_{\bm{p}}
        :=
        \begin{bmatrix}
            {\displaystyle\frac{\partial f}{\partial x}(\bm{p})}&{\displaystyle\frac{\partial f}{\partial y}(\bm{p})}&{\displaystyle\frac{\partial f}{\partial z}(\bm{p})}&{\displaystyle\frac{\partial f}{\partial w}(\bm{p})}
        \end{bmatrix}
        ,\quad
        A_{f(\bm{p})}\mathcal{B}_{\bm{p}}
        =
        \mathcal{B}_{\bm{p}}
        \begin{pmatrix}
            0&1&0&0\\
            0&0&0&0\\
            0&0&0&\sin{w}\\
            0&0&0&0
        \end{pmatrix}
        .
    \end{equation*}
    Thus $M$ is an isoparametric hypersurface in $\mathbb{R}^{5}_{2}$.
    By Table \ref{tab:2}, the type of $(A, \langle\cdot,\cdot\rangle)$ depends on $w$.
    \begin{table}[h]
        \begin{tabular}{|l|l|}\hline
            The value of $w$&The type\\\hline\hline
            ${\displaystyle w\in \bigcup_{n\text{: integer}}\{n\pi\}}$&type X of index 2\\\hline
            ${\displaystyle w\in \bigcup_{n\text{: integer}}(2n\pi, (2n+1)\pi)}$&type IX-i of index 2\\\hline
            ${\displaystyle w\in \bigcup_{n\text{: integer}}((2n+1)\pi, (2n+2)\pi)}$&type IX-ii of index 2\\\hline
        \end{tabular}
        \caption{The type of $(A, \langle\cdot,\cdot\rangle)$}\label{tab:2}
    \end{table}
    On the other hand, the minimal polynomial of $A$ is $t^{2}$ on $M$, which does not depend on $w$.
\end{example}
Therefore, if the Jordan normal form of $A$ is constant, we can define 13 types I, II, ..., and XI of isoparametric hypersurfaces of index 2 by Proposition \ref{prop:1}.
In this paper, we name these types the Petrov types of index 2.

\section{4-Dimensional Isoparametric Hypersurfaces of Index 2 in the Pseudo-Riemannian Space Forms}\label{sec:4}
In this section, we investigate 4-dimensional isoparametric hypersurfaces of index 2 in the pseudo-Riemannian space forms (isoparametric hypersurfaces of index 2 in the 5-dimensional pseudo-Riemannian space forms).

Let $M^{4}_{2}$ be a 4-dimensional hypersurface of index $2$ in the pseudo-Riemannian space form.
The ambient spaces of $M$ can be $\mathbb{R}^{5}_{2}, \mathbb{R}^{5}_{3}, S^{5}_{2}, S^{5}_{3}, H^{5}_{2}$, or $H^{5}_{3}$.
By considering the anti-isometry, we only have to consider $\mathbb{R}^{5}_{2}, S^{5}_{2}$, or $S^{5}_{3}$ as the ambient space of $M^{4}_{2}$.
Then the types of $M$ are $\delta=0$, $1$, or $-1$, respectively.
Since the dimension of $M$ is $4$, the Petrov type of $M$ can not be type V or VIII of index 2.
Therefore, the number of the cases of $M$ is $3\times 11=33$.

From now on, we investigate the existence and non-existence in each case of $M$ one by one.
For example, we have obtained the examples of types IX-i and IX-ii of index 2 by Example \ref{exam:0-2}.
Here, we arrange the cases of $M$ in Table \ref{tab:1}.
\begin{table}[htbp]
    \center
    \begin{tabular}{c||c|c|c|c||c|c|c|c|c|c|c|}
        &I&II&III&IV&VI&VII-i&VII-ii&IX-i&IX-ii&X&XI\\\hline\hline
        $\mathbb{R}^{5}_{2}$ ($\delta=0$)&$\times$&$\triangle$&$\times$&$\times$&\ref{exam:m}&\ref{exam:l}&\ref{exam:k}&\ref{exam:d}&\ref{exam:c}&\ref{exam:b}&\ref{exam:a}\\\hline
        $S^{5}_{2}$ ($\delta=1$)&$\times$&$\triangle$&&$\times$&&&&&&&\ref{exam:e}\\\hline
        $S^{5}_{3}$ ($\delta=-1)$&&\ref{exam:j}&&&&&&\ref{exam:i}&\ref{exam:h}&\ref{exam:g}&\ref{exam:f}\\\hline
    \end{tabular}
    \caption{the cases of $M$}\label{tab:1}
\end{table}
In VI, VII-i, ..., or XI, the principal curvatures of $M$ are only real numbers.
In the case where one letter is written in Table \ref{tab:1}, there exists such an isoparametric hypersurface, which is explained in Sections \ref{sec:4-1} and \ref{sec:4-2}.
The symbols $\times$ and $\triangle$ in Table \ref{tab:1} mean non-existence and conditional non-existence, respectively, which are explained in Section \ref{sec:4-3}.
We remain open questions at the blanks and the triangle symbols in Table \ref{tab:1}.
We anticipate the claims that there exist no isoparametric hypersurfaces whose shape operators have complex principal curvatures in $\delta=0$ or $\delta=1$ and that there exist isoparametric hypersurfaces whose shape operators have only real principal curvatures in all cases.

\subsection{Hahn's Examples}\label{sec:4-1}
We construct several examples by using \cite[2.3, 2.4]{MR753432}.

\begin{example}[{\cite[2.3]{MR753432}}]
    Let $f\colon \mathbb{R}^{n}_{s}\to \mathbb{R}$ be the function $f(x):=\langle Px, x\rangle_{s}+2\langle p, x\rangle_{s}$ where $0\neq P\in \mathrm{Sym}(\mathbb{R}^{n}_{s}, \langle\cdot,\cdot\rangle_{s})$ and $p\in \mathbb{R}^{n}_{s}$.
    If there exists $\rho\in \mathbb{R}\setminus \{0\}$ such that $P=\rho E_{n}$, or if $P^{2}=O$, $Pp=\bm{0}$, and $\langle p, p\rangle\neq 0$, then $f$ is isoparametric and $\mathrm{W}_{\mathrm{RN}}(f)\neq \emptyset$.
    Thus, for $c\in \mathrm{W}_{\mathrm{RN}}(f)$, a connected component $M$ of $f^{-1}(c)\subset \mathbb{R}^{n}_{s}$ is an isoparametric hypersurface.
    For $x\in M$, we can compute $\mathrm{grad}^{\mathbb{R}^{n}_{s}}(f)_{x}=2Px+2p$.
    \begin{enumerate}[label=(\alph*)]
        \item \label{exam:a}
        Let $n=5$, $s=2$, $P=-E_{5}$, $p=\bm{0}$, and $f(\bm{e}_{3})=c=-1$.
        Then $A=I$ and $f^{-1}(c)=M=S^{4}_{2}$ hold.
        \item \label{exam:b}
        Let $n=5$, $s=2$, $p=\bm{e}_{3}\in \mathbb{R}^{5}$, $f(\bm{e}_{1})=c=1$, and
        \begin{equation*}
            P
            =
            \begin{pmatrix}
                -1&0&0&0&1\\
                0&0&0&0&0\\
                0&0&0&0&0\\
                0&0&0&0&0\\
                -1&0&0&0&1
            \end{pmatrix}
            .
        \end{equation*}
        The shape operator $A_{x}$ at any $x=(x^{1}, \ldots, x^{5})\in M\subset \mathbb{R}^{5}_{2}$ satisfies
        \begin{align*}
            \mathcal{B}_{x}
            &:=
            \begin{bmatrix}
                \bm{e}_{1}+\bm{e}_{5}&\bm{e}_{1}+(-x^{1}+x^{5})\bm{e}_{3}&\bm{e}_{2}&\bm{e}_{4}
            \end{bmatrix},
            &
            A_{x}
            \mathcal{B}_{x}
            &=
            \mathcal{B}_{x}
            \left(J_{2}(0)\oplus O_{2}\right)
            .
        \end{align*}
        \item \label{exam:c}
        Let $n=5$, $s=2$, $p=\bm{e}_{4}\in \mathbb{R}^{5}$, $f(\bm{e}_{4}/2)=c=1$, and
        \begin{equation*}
            P
            =
            \begin{pmatrix}
                0&1&0&0&1\\
                1&0&-1&0&0\\
                0&1&0&0&1\\
                0&0&0&0&0\\
                -1&0&1&0&0
            \end{pmatrix}
            .
        \end{equation*}
        The shape operator $A_{x}$ at any $x=(x^{1}, \ldots, x^{5})\in M\subset \mathbb{R}^{5}_{2}$ satisfies
        \begin{align*}
            \begin{bmatrix}
                \theta_{1}&\theta_{2}(x)&\eta_{1}&\eta_{2}(x)
            \end{bmatrix}
            &:=
            \begin{bmatrix}
                -\bm{e}_{1}-\bm{e}_{3}&\bm{e}_{2}+(x^{1}-x^{3})\bm{e}_{4}&-\bm{e}_{2}+\bm{e}_{5}&\bm{e}_{1}+(x^{2}+x^{5})\bm{e}_{4}
            \end{bmatrix},\\
            \begin{bmatrix}
                \theta_{1}'&\theta_{2}'(x)&\eta_{1}'&\eta_{2}'(x)
            \end{bmatrix}
            &:=
            \begin{bmatrix}
                \theta_{1}+\eta_{1}&\theta_{2}(x)+\eta_{2}(x)&\theta_{1}-\eta_{1}&\theta_{2}(x)-\eta_{2}(x)
            \end{bmatrix},\\
            S(x)&:=x^{1}+x^{2}-x^{3}+x^{5},\quad T(x):=x^{1}-x^{2}-x^{3}-x^{5},\\
            \begin{bmatrix}
                \theta_{1}''&\theta_{2}''(x)&\eta_{1}''&\eta_{2}''(x)
            \end{bmatrix}
            &:=
            \begin{bmatrix}
                \theta_{1}'&\theta_{2}'(x)+(1/2)S(x)T(x)\eta_{1}'&\eta_{1}'&\eta_{2}'(x)
            \end{bmatrix},
        \end{align*}
        \begin{align*}
            \bm{b}_{1}&:=(1/\sqrt{2})\theta_{1}''
            ,\quad
            \bm{b}_{2}(x):=(1/\sqrt{2})\theta_{2}''(x)-(1/2\sqrt{2})\left\{(S(x))^{2}-2\right\}\theta_{1}'',\\
            \bm{b}_{3}&:=(1/\sqrt{2})\eta_{1}''
            ,\quad
            \bm{b}_{4}(x):=(1/\sqrt{2})\eta_{2}''(x)-(1/2\sqrt{2})\left\{(T(x))^{2}-2\right\}\eta_{1}'',\\
            \mathcal{B}_{x}
            &:=
            \begin{bmatrix}
                \bm{b}_{1}&\bm{b}_{2}(x)&\bm{b}_{3}&\bm{b}_{4}(x)
            \end{bmatrix}
            ,\\
            A_{x}
            \mathcal{B}_{x}
            &=
            \mathcal{B}_{x}
            \left(J_{2}(0)\oplus J_{2}(0)\right)
            ,\quad
            (\langle \bm{b}_{i}, \bm{b}_{j}\rangle_{2})
            =
            E^{\mathfrak{a}}_{2}\oplus (-E^{\mathfrak{a}}_{2})
            .
        \end{align*}
        \item \label{exam:d}
        Let $n=5$, $s=2$, $p=\bm{e}_{4}\in \mathbb{R}^{5}$, $f(\bm{e}_{4}/2)=c=1$, and
        \begin{equation*}
            P
            =
            \begin{pmatrix}
                1&0&0&0&1\\
                0&1&-1&0&0\\
                0&1&-1&0&0\\
                0&0&0&0&0\\
                -1&0&0&0&-1
            \end{pmatrix}
            .
        \end{equation*}
        The shape operator $A_{x}$ at any $x=(x^{1}, \ldots, x^{5})\in M\subset \mathbb{R}^{5}_{2}$ satisfies
        \begin{align*}
            \begin{bmatrix}
                \theta_{1}&\theta_{2}(x)&\eta_{1}&\eta_{2}(x)
            \end{bmatrix}
            &:=
            \begin{bmatrix}
                -\bm{e}_{1}+\bm{e}_{5}&\bm{e}_{1}+(x^{1}+x^{5})\bm{e}_{4}&-\bm{e}_{2}-\bm{e}_{3}&\bm{e}_{2}+(x^{2}-x^{3})\bm{e}_{4}
            \end{bmatrix},\\
            \begin{bmatrix}
                \theta_{1}'&\theta_{2}'(x)&\eta_{1}'&\eta_{2}'(x)
            \end{bmatrix}
            &:=
            \begin{bmatrix}
                \theta_{1}&\theta_{2}-(x^{1}+x^{5})(x^{2}-x^{3})\eta_{1}&\eta_{1}&\eta_{2}
            \end{bmatrix},
        \end{align*}
        \begin{align*}
            \bm{b}_{1}&:=\theta_{1}'
            ,\quad
            \bm{b}_{2}(x):=\theta_{2}'(x)-(1/2)\left\{(x^{1}+x^{5})^{2}-1\right\}\theta_{1}',\\
            \bm{b}_{3}&:=\eta_{1}'
            ,\quad
            \bm{b}_{4}(x):=\eta_{2}'(x)-(1/2)\left\{(x^{2}-x^{3})^{2}-1\right\}\eta_{1}',\\
            \mathcal{B}_{x}&:=
            \begin{bmatrix}
                \bm{b}_{1}&\bm{b}_{2}(x)&\bm{b}_{3}&\bm{b}_{4}(x)
            \end{bmatrix}
            ,\\
            A_{x}
            \mathcal{B}_{x}
            &=
            \mathcal{B}_{x}
            \left(J_{2}(0)\oplus J_{2}(0)\right)
            ,\quad
            (\langle \bm{b}_{i}, \bm{b}_{j}\rangle_{2})
            =
            E^{\mathfrak{a}}_{2}\oplus E^{\mathfrak{a}}_{2}
            .
        \end{align*}
    \end{enumerate}
\end{example}

\begin{example}[{\cite[2.4]{MR753432}}]
    Let $f\colon S^{n}_{s}\to \mathbb{R}$ be the function $f(x):=\langle Px, x\rangle_{s}$ where $P\in \mathrm{Sym}(\mathbb{R}^{n+1}_{s}, \langle\cdot,\cdot\rangle_{s})$.
    If the degree of the minimal polynomial $\mu_{P}$ of $P$ is 2, then $f$ is isoparametric and $\mathrm{W}_{\mathrm{RN}}(f)\neq \emptyset$.
    Thus, for $c\in \mathrm{W}_{\mathrm{RN}}(f)$, a connected component $M$ of $f^{-1}(c)$ is an isoparametric hypersurface.
    For $x\in M$, we can compute $\mathrm{grad}^{S^{n}_{s}}(f)_{x}=2Px-2\langle Px, x\rangle_{s} x$.
    Then the shape operator of $M$ at $x\in M$ is
    \begin{equation*}
        A_{x}
        =
        \frac{1}{\sqrt{-\delta \mu_{P}(c)}}\left.\left(cE_{n+1}-P\right)\right|_{T_{x}M}
        ,
    \end{equation*}
    where $\delta=\nu=\mathrm{sign}\left(\langle\mathrm{grad}^{S^{n}_{s}}(f)_{x}, \mathrm{grad}^{S^{n}_{s}}(f)_{x}\rangle\right)$ is the type of $M$.
    \begin{enumerate}[label=(\alph*)]
        \setcounter{enumi}{4}
        \item \label{exam:e}
        Let $n=5$, $s=2$, $f(\bm{e}_{6})=c=0$, and $P=E^{\mathfrak{a}}_{2}\oplus E^{\mathfrak{a}}_{4}$.
        Then $\mu_{P}(t)=t^2-1$.
        The shape operator $A$ is diagonalizable.
        The principal curvatures of $M$ are $-1$, $-1$, $1$, and $1$.
        \item \label{exam:f}
        Let $n=5$, $s=3$, $f(-\bm{e}_{1}/\sqrt{2}+\bm{e}_{3}/\sqrt{2}+\sqrt{2}\bm{e}_{5})=c=3$, and $P=E^{\mathfrak{a}}_{3}\oplus E^{\mathfrak{a}}_{3}$.
        Then $\mu_{P}(t)=t^2-1$.
        The shape operator $A$ is diagonalizable.
        The principal curvatures of $M$ are $1/\sqrt{2}$, $1/\sqrt{2}$, $1/\sqrt{2}$, and $\sqrt{2}$.
        \item \label{exam:g}
        Let $n=5$, $s=3$, $f(\bm{e}_{4}/\sqrt{2}+\bm{e}_{6}/\sqrt{2})=c=1$, and
        \begin{equation*}
            P
            =
            \begin{pmatrix}
                0&0&1&0&0&-1\\
                0&0&0&0&0&0\\
                1&0&0&1&0&0\\
                0&0&-1&0&0&1\\
                0&0&0&0&0&0\\
                1&0&0&1&0&0
            \end{pmatrix}
            .
        \end{equation*}
        Then $\mu_{P}(t)=t^2$.
        For $x=(x^{1}, \ldots, x^{6})\in M\subset S^{5}_{3}\subset \mathbb{R}^{6}_{3}$ where $x^1+x^4\neq 0$ and $-x^{3}+x^{6}\neq 0$, we define
        \begin{align*}
            \bm{a}_{1}(x)
            &:=
            -\frac{x^{2}}{x^{1}+x^{4}}\bm{e}_{1}+\bm{e}_{2}+\frac{x^{2}}{x^{1}+x^{4}}\bm{e}_{4},\\
            \bm{a}_{2}(x)
            &:=
            \left(-\frac{x^{3}}{x^{1}+x^{4}}+\frac{x^{4}}{-x^{3}+x^{6}}\right)\bm{e}_{1}+\bm{e}_{3}+\left(\frac{x^{3}}{x^{1}+x^{4}}+\frac{x^{1}}{-x^{3}+x^{6}}\right)\bm{e}_{4},\\
            \bm{a}_{3}(x)
            &:=
            \frac{x^{5}}{x^{1}+x^{4}}\bm{e}_{1}-\frac{x^{5}}{x^{1}+x^{4}}\bm{e}_{4}+\bm{e}_{5},\\
            \bm{a}_{4}(x)
            &:=
            \left(\frac{x^{6}}{x^{1}+x^{4}}-\frac{x^{4}}{-x^{3}+x^{6}}\right)\bm{e}_{1}+\left(-\frac{x^{6}}{x^{1}+x^{4}}-\frac{x^{1}}{-x^{3}+x^{6}}\right)\bm{e}_{4}+\bm{e}_{6}.
        \end{align*}
        Then $\bm{a}_{i}(x)\in T_{x}M\ (i\in \{1, 2, 3, 4\})$ holds.
        The shape operator $A_{x}$ at $x\in M$ satisfies
        \begin{align*}
            \mathcal{B}_{x}
            &:=
            \begin{bmatrix}
                \bm{a}_{2}(x)+\bm{a}_{4}(x)&{\displaystyle \left(-\frac{-x^{3}+x^{6}}{x^{1}+x^{4}}\right)\bm{a}_{2}(x)}&\bm{a}_{1}(x)&\bm{a}_{3}(x)
            \end{bmatrix},
            &
            A_{x}
            \mathcal{B}_{x}
            &=
            \mathcal{B}_{x}
            \left(J_{2}(1)\oplus E_{2}\right)
            .
        \end{align*}
        \item \label{exam:h}
        Let $n=5$, $s=3$, $f(\bm{e}_{5})=c=-1$, and
        \begin{equation*}
            P
            =
            \begin{bmatrix}
                E^{\mathfrak{a}}_{3}&E_{3}\\
                -E_{3}&-E^{\mathfrak{a}}_{3}
            \end{bmatrix}
            .
        \end{equation*}
        Then $\mu_{P}(t)=t^2$.
        For $x=(x^{1}, \ldots, x^{6})\in M\subset S^{5}_{3}\subset \mathbb{R}^{6}_{3}$ where $x^2+x^5\neq 0$, we define
        \begin{align*}
            \bm{a}_{1}(x)
            &:=
            \bm{e}_{1}+\left(-\frac{x^{1}}{x^{2}+x^{5}}-x^{5}\frac{x^{3}+x^{4}}{(x^{2}+x^{5})^{2}}\right)\bm{e}_{2}+\left(\frac{x^{1}}{x^{2}+x^{5}}-x^{2}\frac{x^{3}+x^{4}}{(x^{2}+x^{5})^{2}}\right)\bm{e}_{5},\\
            \bm{a}_{2}(x)
            &:=
            \left(-\frac{x^{3}}{x^{2}+x^{5}}-x^{5}\frac{x^{1}+x^{6}}{(x^{2}+x^{5})^{2}}\right)\bm{e}_{2}+\bm{e}_{3}+\left(\frac{x^{3}}{x^{2}+x^{5}}-x^{2}\frac{x^{1}+x^{6}}{(x^{2}+x^{5})^{2}}\right)\bm{e}_{5},\\
            \bm{a}_{3}(x)
            &:=
            \left(\frac{x^{4}}{x^{2}+x^{5}}-x^{5}\frac{x^{1}+x^{6}}{(x^{2}+x^{5})^{2}}\right)\bm{e}_{2}+\bm{e}_{4}+\left(-\frac{x^{4}}{x^{2}+x^{5}}-x^{2}\frac{x^{1}+x^{6}}{(x^{2}+x^{5})^{2}}\right)\bm{e}_{5},\\
            \bm{a}_{4}(x)
            &:=
            \left(\frac{x^{6}}{x^{2}+x^{5}}-x^{5}\frac{x^{3}+x^{4}}{(x^{2}+x^{5})^{2}}\right)\bm{e}_{2}+\left(-\frac{x^{6}}{x^{2}+x^{5}}-x^{2}\frac{x^{3}+x^{4}}{(x^{2}+x^{5})^{2}}\right)\bm{e}_{5}+\bm{e}_{6},
        \end{align*}
        Then $\bm{a}_{i}(x)\in T_{x}M\ (i\in \{1, 2, 3, 4\})$ holds.
        The shape operator $A_{x}$ at $x\in M$ satisfies
        \begin{align*}
            \mathcal{B}_{x}
            &:=
            \begin{bmatrix}
                -\bm{a}_{1}(x)+\bm{a}_{4}(x)&\bm{a}_{2}(x)&-\bm{a}_{2}(x)+\bm{a}_{3}(x)&\bm{a}_{1}(x)
            \end{bmatrix},
            &
            A_{x}
            \mathcal{B}_{x}
            &=
            \mathcal{B}_{x}
            \left(J_{2}(-1)\oplus J_{2}(-1)\right)
            .
        \end{align*}
        Moreover, the shape operator $A_{\bm{e}_{5}}$ satisfies
        \begin{align*}
            \bm{b}_{1}
            &:=
            \frac{1}{\sqrt{2}}\left(-\bm{a}_{1}(\bm{e}_{5})-\bm{a}_{2}(\bm{e}_{5})+\bm{a}_{3}(\bm{e}_{5})+\bm{a}_{4}(\bm{e}_{5})\right),\\
            \bm{b}_{2}
            &:=
            \frac{1}{2\sqrt{2}}\left(\bm{a}_{1}(\bm{e}_{5})+\bm{a}_{2}(\bm{e}_{5})+\bm{a}_{3}(\bm{e}_{5})+\bm{a}_{4}(\bm{e}_{5})\right),\\
            \bm{b}_{3}
            &:=
            \frac{1}{\sqrt{2}}\left(-\bm{a}_{1}(\bm{e}_{5})+\bm{a}_{2}(\bm{e}_{5})-\bm{a}_{3}(\bm{e}_{5})+\bm{a}_{4}(\bm{e}_{5})\right),\\
            \bm{b}_{4}
            &:=
            \frac{1}{2\sqrt{2}}\left(-\bm{a}_{1}(\bm{e}_{5})+\bm{a}_{2}(\bm{e}_{5})+\bm{a}_{3}(\bm{e}_{5})-\bm{a}_{4}(\bm{e}_{5})\right),\\
            \mathfrak{B}_{\bm{e}_{5}}
            &:=
            \begin{bmatrix}
                \bm{b}_{1} & \bm{b}_{2} & \bm{b}_{3} & \bm{b}_{4}
            \end{bmatrix}
            ,\\
            A_{\bm{e}_{5}}
            \mathfrak{B}_{\bm{e}_{5}}
            &=
            \mathfrak{B}_{\bm{e}_{5}}
            \left(J_{2}(-1)\oplus J_{2}(-1)\right)
            ,\quad
            (\langle \bm{b}_{i}, \bm{b}_{j}\rangle_{3})
            =
            E^{\mathfrak{a}}_{2}\oplus (-E^{\mathfrak{a}}_{2})
            .
        \end{align*}
        \item \label{exam:i}
        Let $n=5$, $s=3$, $f(\bm{e}_{5})=c=-1$, and
        \begin{equation*}
            P
            =
            \begin{bmatrix}
                E_{3}&E_{3}\\
                -E_{3}&-E_{3}
            \end{bmatrix}
            .
        \end{equation*}
        Then $\mu_{P}(t)=t^2$.
        For $x=(x^{1}, \ldots, x^{6})\in M\subset S^{5}_{3}\subset \mathbb{R}^{6}_{3}$ where $x^2+x^5\neq 0$, we define
        \begin{align*}
            \bm{a}_{1}(x)
            &:=
            \bm{e}_{1}+\left(-\frac{x^{1}}{x^{2}+x^{5}}-x^{5}\frac{x^{1}+x^{4}}{(x^{2}+x^{5})^{2}}\right)\bm{e}_{2}+\left(\frac{x^{1}}{x^{2}+x^{5}}-x^{2}\frac{x^{1}+x^{4}}{(x^{2}+x^{5})^{2}}\right)\bm{e}_{5},\\
            \bm{a}_{2}(x)
            &:=
            \left(-\frac{x^{3}}{x^{2}+x^{5}}-x^{5}\frac{x^{3}+x^{6}}{(x^{2}+x^{5})^{2}}\right)\bm{e}_{2}+\bm{e}_{3}+\left(\frac{x^{3}}{x^{2}+x^{5}}-x^{2}\frac{x^{3}+x^{6}}{(x^{2}+x^{5})^{2}}\right)\bm{e}_{5},\\
            \bm{a}_{3}(x)
            &:=
            \left(\frac{x^{4}}{x^{2}+x^{5}}-x^{5}\frac{x^{1}+x^{4}}{(x^{2}+x^{5})^{2}}\right)\bm{e}_{2}+\bm{e}_{4}+\left(-\frac{x^{4}}{x^{2}+x^{5}}-x^{2}\frac{x^{1}+x^{4}}{(x^{2}+x^{5})^{2}}\right)\bm{e}_{5},\\
            \bm{a}_{4}(x)
            &:=
            \left(\frac{x^{6}}{x^{2}+x^{5}}-x^{5}\frac{x^{3}+x^{6}}{(x^{2}+x^{5})^{2}}\right)\bm{e}_{2}+\left(-\frac{x^{6}}{x^{2}+x^{5}}-x^{2}\frac{x^{3}+x^{6}}{(x^{2}+x^{5})^{2}}\right)\bm{e}_{5}+\bm{e}_{6}.
        \end{align*}
        Then $\bm{a}_{i}(x)\in T_{x}M\ (i\in \{1, 2, 3, 4\})$ holds.
        The shape operator $A_{x}$ at $x\in M$ satisfies
        \begin{align*}
            \mathcal{B}_{x}
            &:=
            \begin{bmatrix}
                -\bm{a}_{1}(x)+\bm{a}_{3}(x)&\bm{a}_{1}(x)&-\bm{a}_{2}(x)+\bm{a}_{4}(x)&\bm{a}_{2}(x)
            \end{bmatrix},
            &
            A_{x}
            \mathcal{B}_{x}
            &=
            \mathcal{B}_{x}
            \left(J_{2}(-1)\oplus J_{2}(-1)\right)
            .
        \end{align*}
        Moreover, the shape operator $A_{\bm{e}_{5}}$ satisfies
        \begin{align*}
            \mathfrak{B}_{\bm{e}_{5}}
            &:=
            \begin{bmatrix}
                -\bm{a}_{1}(\bm{e}_{5})+\bm{a}_{3}(\bm{e}_{5})&{\displaystyle \frac{\bm{a}_{1}(\bm{e}_{5})+\bm{a}_{3}(\bm{e}_{5})}{2}}&-\bm{a}_{2}(\bm{e}_{5})+\bm{a}_{4}(\bm{e}_{5})&{\displaystyle \frac{\bm{a}_{2}(\bm{e}_{5})+\bm{a}_{4}(\bm{e}_{5})}{2}}
            \end{bmatrix}\\
            &=:
            \begin{bmatrix}
                \bm{b}_{1} & \bm{b}_{2} & \bm{b}_{3} & \bm{b}_{4}
            \end{bmatrix}
            ,\\
            A_{\bm{e}_{5}}
            \mathfrak{B}_{\bm{e}_{5}}
            &=
            \mathfrak{B}_{\bm{e}_{5}}
            \left(J_{2}(-1)\oplus J_{2}(-1)\right)
            ,\quad
            (\langle \bm{b}_{i}, \bm{b}_{j}\rangle_{3})
            =
            E^{\mathfrak{a}}_{2}\oplus E^{\mathfrak{a}}_{2}
            .
        \end{align*}
        \item \label{exam:j}
        Let $n=5$, $s=3$, $f(\bm{e}_{4})=c=1$, and
        \begin{equation*}
            P
            =
            \begin{bmatrix}
                E_{3}&-E^{\mathfrak{a}}_{3}\\
                E^{\mathfrak{a}}_{3}&E_{3}
            \end{bmatrix}
            .
        \end{equation*}
        Then $\mu_{P}(t)=t^2-2t+2$.
        The shape operator $A$ is diagonalizable.
        The principal curvatures of $M$ are $-\sqrt{-1}$, $-\sqrt{-1}$, $\sqrt{-1}$, and $\sqrt{-1}$.
    \end{enumerate}
\end{example}

\subsection{Other Examples}\label{sec:4-2}
We construct the other examples \ref{exam:k}, \ref{exam:l}, and \ref{exam:m} thanks to \cite[3. Examples]{MR783023}.
Although the examples in \cite[3. Examples]{MR783023} are ones of Lorentzian isoparametric hypersurfaces of $\mathbb{R}^{n+1}_{1}$, we can modify them to construct isoparametric hypersurfaces of index 2.

\begin{example}
    \hspace{2em}
    \begin{enumerate}[label=(\alph*)]
        \setcounter{enumi}{10}
        \item \label{exam:k}
        Let
        \begin{align*}
            &X(s)
            :=
            \begin{pmatrix}
                {\displaystyle \frac{\sqrt{2}(s^2+6)}{8}+\frac{1}{2}}\vspace{1mm}\\
                {\displaystyle \frac{\sqrt{2}}{2}}\vspace{1mm}\\
                {\displaystyle \frac{\sqrt{2}(s^2+6)}{8}-\sqrt{2}+\frac{1}{2}}\vspace{1mm}\\
                {\displaystyle -\frac{\sqrt{2}}{2}s}\vspace{1mm}\\
                {\displaystyle 1+\frac{\sqrt{2}}{2}}
            \end{pmatrix}
            ,
            \quad
            Y(s)
            :=
            \begin{pmatrix}
                {\displaystyle \frac{\sqrt{2}(s^2+6)}{8}-\frac{1}{2}}\vspace{1mm}\\
                {\displaystyle \frac{\sqrt{2}}{2}}\vspace{1mm}\\
                {\displaystyle \frac{\sqrt{2}(s^2+6)}{8}-\sqrt{2}-\frac{1}{2}}\vspace{1mm}\\
                {\displaystyle -\frac{\sqrt{2}}{2}s}\vspace{1mm}\\
                {\displaystyle 1-\frac{\sqrt{2}}{2}}
            \end{pmatrix}
            ,\\
            \quad
            &Z(s)
            :=
            \begin{pmatrix}
                {\displaystyle \frac{s}{2}}\vspace{1mm}\\
                0\vspace{1mm}\\
                {\displaystyle \frac{s}{2}}\vspace{1mm}\\
                -1\vspace{1mm}\\
                0
            \end{pmatrix}
            ,
            \quad
            V
            :=
            \begin{pmatrix}
                {\displaystyle \frac{1}{2}}\vspace{1mm}\\
                -1\vspace{1mm}\\
                {\displaystyle \frac{1}{2}}\vspace{1mm}\\
                0\vspace{1mm}\\
                0
            \end{pmatrix}
            ,
            \quad
            C(s)
            :=
            \begin{pmatrix}
                {\displaystyle \frac{s^{2}}{4}+1}\vspace{1mm}\\
                1\vspace{1mm}\\
                {\displaystyle \frac{s^{2}}{4}-1}\vspace{1mm}\\
                -s\vspace{1mm}\\
                \sqrt{2}
            \end{pmatrix}
            ,
            \quad
            x(s):=\int_{0}^{s}X(w)dw
            .
        \end{align*}
        We take an arbitrary $a>0$.
        Let $f\colon \mathbb{R}^{4}\to \mathbb{R}^{5}_{2}$ be the mapping
        \begin{equation*}
            f(\bm{p})
            :=
            x(s)+uY(s)+zZ(s)+vV+\frac{1}{a}\left(1-\sqrt{1+a^{2}v^{2}}\right)C(s)
            ,\quad \bm{p}=(s, u, z, v)\in \mathbb{R}^{4}.
        \end{equation*}
        Let
        \begin{equation*}
            D
            :=
            \left\{
                \begin{pmatrix}
                    s\\
                    u\\
                    z\\
                    v
                \end{pmatrix}
                \in \mathbb{R}^{4}
                \mid
                z+\sqrt{2}\neq 0
            \right\}
            .
        \end{equation*}
        Then the connected components $M$ of $f(D)$ are isoparametric.
        For $\bm{p}\in D$, we have
        \begin{equation*}
            \xi_{f(\bm{p})}
            =
            -\frac{\sqrt{2}z}{z+\sqrt{2}}\sqrt{1+a^{2}v^{2}}Y(s)-avV+\sqrt{1+a^{2}v^{2}}C(s)
            .
        \end{equation*}
        Thus, $\nu=1$.
        Let
        \begin{align*}
            \bm{b}_{1}(\bm{p})
            &:=
            {\displaystyle \frac{\partial f}{\partial u}(\bm{p})}
            ,&
            \bm{b}_{2}(\bm{p})
            &:=
            {\displaystyle \frac{(z+\sqrt{2})^{2}}{2\sqrt{1+a^{2}v^{2}}}\frac{\partial f}{\partial z}(\bm{p})}
            ,\\
            \bm{b}_{3}(\bm{p})
            &:=
            {\displaystyle -\frac{(z+\sqrt{2})^{3}}{2\sqrt{2}(1+a^{2}v^{2})}\frac{\partial f}{\partial s}(\bm{p})}
            ,&
            \bm{b}_{4}(\bm{p})
            &:=
            {\displaystyle \frac{\sqrt{2}azv}{(z+\sqrt{2})\sqrt{1+a^{2}v^{2}}}\frac{\partial f}{\partial u}(\bm{p})+\frac{\partial f}{\partial v}(\bm{p})}
            .
        \end{align*}
        The shape operator $A_{f(\bm{p})}$  at $f(\bm{p})\in M$ satisfies
        \begin{align*}
            \mathcal{B}_{\bm{p}}
            &:=
            \begin{bmatrix}
                \bm{b}_{1}(\bm{p})&\bm{b}_{2}(\bm{p})&\bm{b}_{3}(\bm{p})&\bm{b}_{4}(\bm{p})
            \end{bmatrix}
            ,&
            A_{f(\bm{p})}\mathcal{B}_{\bm{p}}
            &=
            \mathcal{B}_{\bm{p}}
            \left(J_{3}(0)\oplus J_{1}(a)\right)
            .
        \end{align*}
        Then we have
        \begin{equation*}
            \langle \bm{b}_{1}(\bm{p}), \bm{b}_{4}(\bm{p})\rangle_{2}
            =
            \langle \bm{b}_{2}(\bm{p}), \bm{b}_{4}(\bm{p})\rangle_{2}
            =
            \langle \bm{b}_{3}(\bm{p}), \bm{b}_{4}(\bm{p})\rangle_{2}
            =
            0
            ,\quad
            \langle \bm{b}_{4}(\bm{p}), \bm{b}_{4}(\bm{p})\rangle_{2}
            =
            -\frac{1}{1+a^{2}v^{2}}
            <
            0
            .
        \end{equation*}
        \item \label{exam:l}
        Let
        \begin{align*}
            &X(s)
            :=
            \begin{pmatrix}
                {\displaystyle \frac{\sqrt{2}(s^2+2)}{8}-\sqrt{2}}\vspace{1mm}\\
                {\displaystyle \frac{\sqrt{2}}{2}s}\vspace{1mm}\\
                {\displaystyle \frac{\sqrt{2}(s^2+2)}{8}}\vspace{1mm}\\
                {\displaystyle \frac{\sqrt{2}}{2}}\vspace{1mm}\\
                {\displaystyle \frac{\sqrt{2}}{2}}
            \end{pmatrix}
            ,
            \quad
            Y(s)
            :=
            \begin{pmatrix}
                {\displaystyle -\frac{\sqrt{2}(s^2+2)}{8}+\sqrt{2}}\vspace{1mm}\\
                {\displaystyle -\frac{\sqrt{2}}{2}s}\vspace{1mm}\\
                {\displaystyle -\frac{\sqrt{2}(s^2+2)}{8}}\vspace{1mm}\\
                {\displaystyle -\frac{\sqrt{2}}{2}}\vspace{1mm}\\
                {\displaystyle \frac{\sqrt{2}}{2}}
            \end{pmatrix}
            ,\\
            \quad
            &Z(s)
            :=
            \begin{pmatrix}
                {\displaystyle \frac{s}{2}}\vspace{1mm}\\
                1\vspace{1mm}\\
                {\displaystyle \frac{s}{2}}\vspace{1mm}\\
                0\vspace{1mm}\\
                0
            \end{pmatrix}
            ,
            \quad
            V
            :=
            \begin{pmatrix}
                {\displaystyle \frac{1}{2}}\vspace{1mm}\\
                0\vspace{1mm}\\
                {\displaystyle \frac{1}{2}}\vspace{1mm}\\
                -1\vspace{1mm}\\
                0
            \end{pmatrix}
            ,
            \quad
            C(s)
            :=
            \begin{pmatrix}
                {\displaystyle \frac{s^{2}}{4}-1}\vspace{1mm}\\
                s\vspace{1mm}\\
                {\displaystyle \frac{s^{2}}{4}+1}\vspace{1mm}\\
                1\vspace{1mm}\\
                0
            \end{pmatrix}
            ,
            \quad
            x(s):=\int_{0}^{s}X(w)dw
            .
        \end{align*}
        We take an arbitrary $a>0$.
        Let $f\colon \mathbb{R}^{4}\to \mathbb{R}^{5}_{2}$ be the mapping
        \begin{equation*}
            f(\bm{p})
            :=
            x(s)+uY(s)+zZ(s)+vV+\frac{1}{a}\left(1-\sqrt{1-a^{2}v^{2}}\right)C(s)
            ,\quad \bm{p}=(s, u, z, v)\in \mathbb{R}^{4}.
        \end{equation*}
        Let
        \begin{equation*}
            D
            :=
            \left\{
                \begin{pmatrix}
                    s\\
                    u\\
                    z\\
                    v
                \end{pmatrix}
                \in \mathbb{R}^{4}
                \mid
                z-\sqrt{2}\neq 0
                ,
                v\in \left(-\frac{1}{a}, \frac{1}{a}\right)
            \right\}
            .
        \end{equation*}
        Then the connected components $M$ of $f(D)$ are isoparametric.
        For $\bm{p}\in D$, we have
        \begin{equation*}
            \xi_{f(\bm{p})}
            =
            \frac{\sqrt{2}z}{z-\sqrt{2}}\sqrt{1-a^{2}v^{2}}Y(s)-avV+\sqrt{1-a^{2}v^{2}}C(s)
            .
        \end{equation*}
        Thus, $\nu=1$.
        Let
        \begin{align*}
            \bm{b}_{1}
            &:=
            {\displaystyle \frac{\partial f}{\partial u}(\bm{p})}
            ,&
            \bm{b}_{2}
            &:=
            {\displaystyle \frac{(z-\sqrt{2})^{2}}{2\sqrt{1-a^{2}v^{2}}}\frac{\partial f}{\partial z}(\bm{p})}
            ,\\
            \bm{b}_{3}
            &:=
            {\displaystyle \frac{(z-\sqrt{2})^{3}}{2\sqrt{2}(1-a^{2}v^{2})}\frac{\partial f}{\partial s}(\bm{p})}
            ,&
            \bm{b}_{4}
            &:=
            {\displaystyle \frac{\sqrt{2}azv}{(z-\sqrt{2})\sqrt{1-a^{2}v^{2}}}\frac{\partial f}{\partial u}(\bm{p})+\frac{\partial f}{\partial v}(\bm{p})}
            .
        \end{align*}
        The shape operator $A_{f(\bm{p})}$ at $\bm{p}\in D$ satisfies
        \begin{align*}
            \mathcal{B}_{\bm{p}}
            &:=
            \begin{bmatrix}
                \bm{b}_{1}(\bm{p})&\bm{b}_{2}(\bm{p})&\bm{b}_{3}(\bm{p})&\bm{b}_{4}(\bm{p})
            \end{bmatrix}
            ,&
            A_{f(\bm{p})}\mathcal{B}_{\bm{p}}
            =
            \mathcal{B}_{\bm{p}}
            \left(J_{3}(0)\oplus J_{1}(a)\right)
            .
        \end{align*}
        Then we have
        \begin{equation*}
            \langle \bm{b}_{1}(\bm{p}), \bm{b}_{4}(\bm{p})\rangle_{2}
            =
            \langle \bm{b}_{2}(\bm{p}), \bm{b}_{4}(\bm{p})\rangle_{2}
            =
            \langle \bm{b}_{3}(\bm{p}), \bm{b}_{4}(\bm{p})\rangle_{2}
            =
            0
            ,\quad
            \langle \bm{b}_{4}(\bm{p}), \bm{b}_{4}(\bm{p})\rangle_{2}
            =
            \frac{1}{1-a^{2}v^{2}}
            >
            0
            .
        \end{equation*}
    \end{enumerate}
\end{example}
Example \ref{exam:101} is based on an original idea although we refer to \cite[3. Examples]{MR783023}.
\begin{example}\label{exam:101}
    \hspace{2em}
    \begin{enumerate}[label=(\alph*)]
        \setcounter{enumi}{12}
        \item \label{exam:m}
        Let
        \begin{align*}
            X
            &:=
            \begin{pmatrix}
                0\\
                -1\\
                0\\
                0\\
                1
            \end{pmatrix}
            ,\quad
            Y(u)
            :=
            \begin{pmatrix}
                u\\
                -1\\
                u\\
                1\\
                0
            \end{pmatrix}
            ,\quad
            x(s)=\int_{0}^{s}Xdv
            ,\quad
            y(u)=\int_{0}^{u}Y(v)dv
            ,\\
            Z
            &:=
            \begin{pmatrix}
                1\\
                0\\
                1\\
                0\\
                0
            \end{pmatrix}
            ,\quad
            W(u)
            :=
            \frac{1}{2}
            \begin{pmatrix}
                u^{2}+1\\
                0\\
                u^{2}-1\\
                2u\\
                0
            \end{pmatrix}
            ,\quad
            C(u)
            :=
            \begin{pmatrix}
                -u\\
                1\\
                -u\\
                -1\\
                -1
            \end{pmatrix}
            .
        \end{align*}
        Let $f\colon\mathbb{R}^{4}\to \mathbb{R}^{5}_{2}$ be the mapping
        \begin{equation*}
            f(\bm{p})
            :=
            x(s)+wW(u)+zZ-\frac{z^{2}}{2}C(u)+y(u)
            ,\quad
            \bm{p}=(s, w, z, u)\in \mathbb{R}^{4}
            .
        \end{equation*}
        Then $M:=f(\mathbb{R}^{4})$ is isoparametric.
        For $\bm{p}\in \mathbb{R}^{4}$, we have
        \begin{equation*}
            \xi_{f(\bm{p})}
            =
            \left(-w+\frac{z^{3}}{2}\right)X-zW(u)+C(u)
            .
        \end{equation*}
        Thus, $\nu=1$.
        Let
        \begin{align*}
            \bm{b}_{1}(\bm{p})
            &:=
            {\displaystyle \frac{\partial f}{\partial s}(\bm{p})}
            ,&
            \bm{b}_{2}(\bm{p})
            &:=
            {\displaystyle \frac{\partial f}{\partial w}(\bm{p})}
            ,\\
            \bm{b}_{3}(\bm{p})
            &:=
            {\displaystyle \frac{3z^{2}}{2}\frac{\partial f}{\partial w}(\bm{p})+\frac{\partial f}{\partial z}(\bm{p})}
            ,&
            \bm{b}_{4}(\bm{p})
            &:=
            {\displaystyle \left(\frac{9z^{4}}{4}+z\right)\frac{\partial f}{\partial w}(\bm{p})+\frac{3z^{2}}{2}\frac{\partial f}{\partial z}(\bm{p})+\frac{\partial f}{\partial u}(\bm{p})}
            .
        \end{align*}
        The shape operator $A_{f(\bm{p})}$ at $\bm{p}\in \mathbb{R}^{4}$ satisfies
        \begin{align*}
            \mathcal{B}_{\bm{p}}
            &:=
            \begin{bmatrix}
                \bm{b}_{1}(\bm{p})&\bm{b}_{2}(\bm{p})&\bm{b}_{3}(\bm{p})&\bm{b}_{4}(\bm{p})
            \end{bmatrix}
            ,&
            A_{f(\bm{p})}\mathcal{B}_{\bm{p}}
            =
            \mathcal{B}_{\bm{p}}
            J_{4}(0)
            .
        \end{align*}
    \end{enumerate}
\end{example}

\subsection{Non-existence}\label{sec:4-3}
We show the non-existence of an isoparametric hypersurface of index 2 whose Petrov type is type I, II, III, or IV of index 2 in a certain case.
We refer to \cite[Theorem 4.10]{MR783023} to prove the following theorems by contradiction.
\subsubsection{Type I}\label{sec:4-3-1}
\begin{theorem}
    \label{theo:ne-1}
    There exist no isoparametric hypersurfaces whose Petrov type is type I of index 2 in $\mathbb{R}^{5}_{2}$ or $S^{5}_{2}$.
\end{theorem}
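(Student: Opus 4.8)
The plan is to argue by contradiction, following the scheme of \cite[Theorem 4.10]{MR783023}. Suppose $M$ is an isoparametric hypersurface of Petrov type I of index 2 in $N^{5}_{2}(\kappa)$ with $\kappa\in\{0,1\}$, so that $\delta\in\{0,1\}$ and $\nu=1$. Since $M$ is of this Petrov type, the Jordan normal form of $A$ is constant on $M$, so Proposition \ref{prop:1} provides, around any point, a local frame field $\{e_{1},e_{2},e_{3},e_{4}\}$ on some open set $U$ for which
\begin{align*}
    Ae_{1}&=\alpha e_{1}+\beta e_{2}, & Ae_{2}&=-\beta e_{1}+\alpha e_{2}, & Ae_{3}&=e_{1}+\alpha e_{3}+\beta e_{4}, & Ae_{4}&=e_{2}-\beta e_{3}+\alpha e_{4},
\end{align*}
with $\langle e_{1},e_{3}\rangle=-1$, $\langle e_{2},e_{4}\rangle=1$, and all other inner products of frame vectors equal to $0$; here $\alpha\in\mathbb{R}$ and $\beta>0$ are the (constant) real and imaginary parts of the complex principal curvature.

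First I would introduce the connection coefficients $\Gamma^{c}_{ab}$ by $\nabla_{e_{a}}e_{b}=\sum_{c}\Gamma^{c}_{ab}e_{c}$. Because every $\langle e_{a},e_{b}\rangle$ is constant, metric compatibility gives the relations $\langle\nabla_{e_{a}}e_{b},e_{c}\rangle+\langle e_{b},\nabla_{e_{a}}e_{c}\rangle=0$, which cuts down the number of independent $\Gamma^{c}_{ab}$. Next, since the matrix of $A$ in this frame has constant entries, $\nabla_{e_{a}}(Ae_{b})=\sum_{c}A^{c}_{b}\nabla_{e_{a}}e_{c}$, so the Codazzi equation \eqref{eq:Codazzi}, written as $\{e_{a},e_{b}\}e_{c}$ for all triples, becomes a homogeneous linear system in the $\Gamma^{c}_{ab}$. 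I would solve this system to express all connection coefficients in terms of a small number of free functions on $U$. The nilpotent ``$+e_{1}$'' and ``$+e_{2}$'' terms in $Ae_{3},Ae_{4}$ make several of these Codazzi relations genuinely constraining, as opposed to a purely diagonal situation.

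Then I would substitute the connection into the curvature formula \eqref{eq:R} and compare with the Gauss equation in the form \eqref{eq:1}, which for $\kappa\in\{0,1\}$ and $\nu=1$ is completely explicit in terms of $A$. Matching a well-chosen component — I expect one of $\langle R(e_{1},e_{2})e_{3},e_{4}\rangle$ or the components that see the shear terms of $A$ — should produce an algebraic identity among $\alpha$, $\beta$, $\kappa$ that is incompatible with $\beta>0$; this is precisely where the hypothesis $\delta\in\{0,1\}$ (and the resulting sign in \eqref{eq:1}) enters, and it is the reason the statement excludes $S^{5}_{3}$, where the analogous identity becomes consistent. The main obstacle I anticipate is the bookkeeping in the Codazzi step: there are many equations and coefficients, the frame is null rather than orthonormal so index raising and lowering is nonstandard, and one must track the implicit Lie-bracket contributions when passing from \eqref{eq:R} to concrete curvature components. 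Once the connection has been pinned down, however, the contradiction should fall out of one or two components of the Gauss equation.
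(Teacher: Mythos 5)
Your plan is essentially the paper's proof: the paper likewise takes the Petrov frame from Proposition \ref{prop:1}, uses the Codazzi equations (Lemmas \ref{lemm:2} and \ref{lemm:4}) to kill exactly the connection coefficients appearing in $\langle R(C_{1},C_{2})C_{3},C_{4}\rangle$, and then the Gauss equation forces $\kappa+\nu(\alpha^{2}+\beta^{2})=0$ (Proposition \ref{prop:3}), which is impossible for $\kappa\in\{0,1\}$, $\nu=1$, $\beta>0$. The only difference is one of economy: the paper does not solve for the full connection, only for the few coefficients needed to make that single curvature component vanish.
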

Suppose that there exists an isoparametric hypersurface $M$ whose Petrov type is type I of index 2 in $\mathbb{R}^{5}_{2}$, $S^{5}_{2}$, or $S^{5}_{3}$.
By Proposition \ref{prop:1}, there exists a local frame field $C_{1}, C_{2}, C_{3}, C_{4}$ such that
\begin{equation*}
    AC_{1}=\alpha C_{1}+\beta C_{2}, \quad
    AC_{2}=-\beta C_{1}+\alpha C_{2},\quad
    AC_{3}=C_{1}+\alpha C_{3}+\beta C_{4},\quad
    AC_{4}=C_{2}-\beta C_{3}+\alpha C_{4},
\end{equation*}
\begin{equation*}
    \langle C_{i}, C_{j}\rangle
    =
    \begin{cases}
        (-1)^{i}, & (\{i, j\}=\{1, 3\}\text{ or }\{i, j\}=\{2, 4\}),\\
        0, & (\text{otherwise}).
    \end{cases}
\end{equation*}
We now investigate the properties of the local frame field.
For $i, j, k\in \{1, 2, 3, 4\}$, we have $\langle \nabla_{C_{i}}C_{j}, C_{k}\rangle=-\langle \nabla_{C_{i}}C_{k}, C_{j}\rangle$ since $C_{i}\langle C_{j}, C_{k}\rangle=0$, that is, $\nabla$ preserves $\langle\cdot, \cdot\rangle$.
In particular, when $j=k$, we have $\langle \nabla_{C_{i}}C_{j}, C_{j}\rangle=0$.
\begin{lemma}
    \label{lemm:2}
    We have $\langle \nabla_{C_{1}}C_{1}, C_{2}\rangle=\langle \nabla_{C_{2}}C_{1}, C_{2}\rangle=0$.
\end{lemma}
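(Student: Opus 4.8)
\emph{Plan.} I would read off both identities from two carefully chosen instances of the Codazzi equation \eqref{eq:Codazzi}. First I record the only consequences of metric compatibility that the argument needs: since $\langle C_1,C_1\rangle=\langle C_2,C_2\rangle=\langle C_1,C_2\rangle=0$, the remarks made just before the lemma give, for every $i\in\{1,2,3,4\}$,
$\langle\nabla_{C_i}C_1,C_1\rangle=\langle\nabla_{C_i}C_2,C_2\rangle=0$ and $\langle\nabla_{C_i}C_1,C_2\rangle=-\langle\nabla_{C_i}C_2,C_1\rangle$. Combined with $AC_1=\alpha C_1+\beta C_2$ and $AC_2=-\beta C_1+\alpha C_2$, these are all that is used.

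Next I would expand $\{C_1,C_2\}C_2$, i.e.\ \eqref{eq:Codazzi} with $X=C_1$, $Y=C_2$, $Z=C_2$. Substituting $AC_2$ and discarding every term containing $\langle\nabla_{C_i}C_2,C_2\rangle$, the left-hand side becomes $-\beta\langle\nabla_{C_1}C_1,C_2\rangle+\beta\langle\nabla_{C_1}C_2,C_1\rangle$, which by $\langle\nabla_{C_1}C_2,C_1\rangle=-\langle\nabla_{C_1}C_1,C_2\rangle$ equals $-2\beta\langle\nabla_{C_1}C_1,C_2\rangle$. On the right-hand side, substituting $AC_1$ and using $\langle\nabla_{C_2}C_2,C_2\rangle=\langle\nabla_{C_2}C_1,C_1\rangle=0$ leaves $\alpha\langle\nabla_{C_2}C_1,C_2\rangle-\alpha\langle\nabla_{C_2}C_1,C_2\rangle=0$. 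Comparing the two sides and using $\beta>0$ gives $\langle\nabla_{C_1}C_1,C_2\rangle=0$.

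Then I would run the mirror computation $\{C_1,C_2\}C_1$: after the same substitutions and cancellations the left-hand side collapses to $0$, while the right-hand side collapses to $-2\beta\langle\nabla_{C_2}C_1,C_2\rangle$ (here one uses $\langle\nabla_{C_2}C_1,C_1\rangle=0$ and $\langle\nabla_{C_2}C_2,C_1\rangle=-\langle\nabla_{C_2}C_1,C_2\rangle$), hence $\langle\nabla_{C_2}C_1,C_2\rangle=0$. The calculation itself is only a few lines of bookkeeping, so the sole step requiring thought is the choice of test triples: pairing $\{C_1,C_2\}$ with $C_3$ or $C_4$, or using $\{C_1,C_3\}$, $\{C_1,C_4\}$ and the like, drags in connection coefficients $\langle\nabla_{C_i}C_j,C_k\rangle$ with $\{j,k\}\neq\{1,2\}$, whereas the pair $\{C_1,C_2\}C_1$, $\{C_1,C_2\}C_2$ isolates exactly the two quantities in the statement.
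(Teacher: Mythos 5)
Your proposal is correct and is exactly the paper's argument: the author's proof consists precisely of invoking $\{C_{1}, C_{2}\}C_{2}$ and $\{C_{1}, C_{2}\}C_{1}$, and your expansion of those two Codazzi instances (together with metric compatibility of $\nabla$) checks out. The only cosmetic remark is that the skew-symmetry relations you use follow from the constancy of the $\langle C_{i}, C_{j}\rangle$ rather than from their vanishing, which is why the same lemma also applies verbatim in the later sections where $\langle C_{i}, C_{i}\rangle=(-1)^{i}$.
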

\begin{proof}
    Lemma \ref{lemm:2} follows from $\{C_{1}, C_{2}\}C_{2}$ and $\{C_{1}, C_{2}\}C_{1}$.
\end{proof}
\begin{remark}\label{rem:1}
    Lemma \ref{lemm:2} is a claim in Section \ref{sec:4-3-1}.
    However, Lemma \ref{lemm:2} also holds in Sections \ref{sec:4-3-2}, \ref{sec:4-3-3}, or \ref{sec:4-3-4}.
\end{remark}
\begin{lemma}
    \label{lemm:4}
    We have $\langle \nabla_{C_{1}}C_{3}, C_{4}\rangle=\langle \nabla_{C_{2}}C_{3}, C_{4}\rangle=0$.
\end{lemma}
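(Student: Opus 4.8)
The plan is to mimic the proof of Lemma \ref{lemm:2}: exploit the Codazzi equation $\{X, Y\}Z$ for suitable triples $X, Y, Z \in \{C_1, C_2, C_3, C_4\}$, simplify using that $\nabla$ is compatible with $\langle\cdot,\cdot\rangle$ and using Lemma \ref{lemm:2} (which is available here by Remark \ref{rem:1}), and then solve the resulting linear relations for the two quantities $\langle\nabla_{C_1}C_3, C_4\rangle$ and $\langle\nabla_{C_2}C_3, C_4\rangle$.

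Concretely, I would first record the consequences of metric compatibility. Since every $\langle C_i, C_j\rangle$ is constant, $\langle\nabla_{C_i}C_j, C_k\rangle + \langle C_j, \nabla_{C_i}C_k\rangle = 0$; because all four $C_i$ are null this yields $\langle\nabla_{C_i}C_j, C_j\rangle = 0$ for every $i,j$, together with numerous identities pairing $C_1$ with $C_3$ and $C_2$ with $C_4$ (for instance $\langle\nabla_{C_i}C_3, C_1\rangle = -\langle\nabla_{C_i}C_1, C_3\rangle$ and $\langle\nabla_{C_i}C_2, C_4\rangle = -\langle\nabla_{C_i}C_4, C_2\rangle$). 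Second, I would expand the Codazzi equations $\{C_1, C_2\}C_3$ and $\{C_1, C_2\}C_4$, together with a couple of auxiliary ones such as $\{C_1, C_3\}C_2$ and $\{C_2, C_4\}C_1$ whose sole purpose is to eliminate the nuisance connection coefficients introduced by the Jordan structure. In each expansion one repeatedly uses that $AC_j$ is a combination of the $C_i$ with the constant coefficients $\alpha, \beta$ (so $\nabla_X(AC_j)$ is the same constant-coefficient combination of the $\nabla_X C_i$) and that $A$ is self-adjoint. Third, after substituting the metric-compatibility relations, Lemma \ref{lemm:2}, and the auxiliary identities into these expansions, all the terms collapse to $\beta$ times a linear combination of $\langle\nabla_{C_1}C_3, C_4\rangle$ and $\langle\nabla_{C_2}C_3, C_4\rangle$ on each side; since $\beta > 0$, the resulting homogeneous system forces both quantities to vanish.

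I expect the main obstacle to be organizational rather than conceptual. The metric here is not diagonal — it is a sum of two null pairs — and the off-diagonal $1$'s in $AC_3$ and $AC_4$ produce strictly more terms in each Codazzi expansion than appear in the proof of Lemma \ref{lemm:2}. The delicate point is therefore to order the auxiliary Codazzi equations so that each new one introduces only quantities that earlier steps (or metric compatibility, or Lemma \ref{lemm:2}) have already shown to vanish or to coincide with the two target quantities, rather than an ever-growing list of unknown connection coefficients.
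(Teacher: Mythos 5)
Your overall strategy --- expand Codazzi identities $\{X,Y\}Z$, simplify with metric compatibility and Lemma \ref{lemm:2}, and solve the resulting linear system --- is indeed how the paper proceeds, but the concrete equations you select cannot prove the lemma, because the target quantities never appear in them. Write $\Gamma_{ijk}:=\langle\nabla_{C_i}C_j,C_k\rangle$. The unknown $\Gamma_{134}$ can enter \eqref{eq:Codazzi} only through a term $\langle\nabla_{C_1}C_3,\cdot\rangle$ or $\langle\nabla_{C_1}C_4,\cdot\rangle$ paired against $C_4$ or $C_3$; inspecting the four terms of $\{X,Y\}Z$, this forces one of $X,Y$ to equal $C_1$ while the other and $Z$ both lie in $\{C_3,C_4\}$ (it is $AC_3$ and $AC_4$, not $AC_1$ or $AC_2$, that carry the $C_3$- and $C_4$-components needed for the pairing). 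In your primary equations $\{C_1,C_2\}C_3$ and $\{C_1,C_2\}C_4$ the only covariant derivatives occurring are $\nabla_{C_1}C_1$, $\nabla_{C_1}C_2$, $\nabla_{C_2}C_1$, $\nabla_{C_2}C_2$: explicitly, $\{C_1,C_2\}C_3$ reduces to $-\beta\Gamma_{113}-\beta\Gamma_{124}=\beta\Gamma_{223}-\beta\Gamma_{214}$, and your auxiliary $\{C_1,C_3\}C_2$ reduces to $\Gamma_{112}-\beta\Gamma_{113}-\beta\Gamma_{124}=0$. None of the four identities you name involves $\Gamma_{134}$ or $\Gamma_{234}$, so they only constrain other connection coefficients, and no amount of metric compatibility can recover the targets from them.

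The paper's choice is the one dictated by the observation above: it takes $\{C_1,C_3\}C_3$ and $\{C_1,C_4\}C_4$ as the primary equations (each has left-hand side containing $-2\beta\Gamma_{134}$), and uses $\{C_2,C_3\}C_4$, $\{C_2,C_4\}C_3$, and $\{C_1,C_3\}C_2$ to eliminate the surviving coefficients $\Gamma_{113},\Gamma_{124},\Gamma_{314},\Gamma_{323},\Gamma_{412},\Gamma_{413},\Gamma_{424},\Gamma_{214},\Gamma_{223}$; adding the two primary equations and substituting yields $-4\beta\Gamma_{134}=0$, whence $\Gamma_{134}=0$ since $\beta>0$. (The swapped set $\{C_2,C_3\}C_3$, $\{C_2,C_4\}C_4$, $\{C_1,C_3\}C_4$, $\{C_1,C_4\}C_3$, $\{C_2,C_4\}C_1$ handles $\Gamma_{234}$.) Relatedly, your expectation that ``all the terms collapse to $\beta$ times a linear combination of the two target quantities'' is too optimistic: the correct system genuinely involves nine further connection coefficients which are not individually shown to vanish and must instead be cancelled against one another across the five equations. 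So the gap is not merely organizational --- the equations driving the argument must be chosen so that the unknowns actually occur in them, and yours are not.
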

\begin{proof}
    We have
    \begin{align}
        &\{C_{1}, C_{3}\}C_{3};
        &
        2\langle \nabla_{C_{1}}C_{1}, C_{3}\rangle
        -
        2\beta\langle \nabla_{C_{1}}C_{3}, C_{4}\rangle
        &=
        \beta\langle\nabla_{C_{3}}C_{2}, C_{3}\rangle
        -
        \beta\langle\nabla_{C_{3}}C_{1}, C_{4}\rangle
        ,\label{eq:9}\\
        &\{C_{1}, C_{4}\}C_{4};
        &
        2\langle \nabla_{C_{1}}C_{2}, C_{4}\rangle
        -
        2\beta\langle \nabla_{C_{1}}C_{3}, C_{4}\rangle
        &=
        \beta\langle\nabla_{C_{4}}C_{2}, C_{4}\rangle\notag\\
        &&&\quad-
        \langle \nabla_{C_{4}}C_{1}, C_{2}\rangle
        +
        \beta\langle\nabla_{C_{4}}C_{1}, C_{3}\rangle
        ,\label{eq:10}\\
        &\{C_{2}, C_{3}\}C_{4};
        &
        \langle \nabla_{C_{2}}C_{1}, C_{4}\rangle
        +
        \langle \nabla_{C_{2}}C_{2}, C_{3}\rangle
        &=
        -
        \beta\langle\nabla_{C_{3}}C_{1}, C_{4}\rangle
        +
        \beta\langle \nabla_{C_{3}}C_{2}, C_{3}\rangle
        ,\label{eq:11}\\
        &\{C_{2}, C_{4}\}C_{3};
        &
        \langle \nabla_{C_{2}}C_{2}, C_{3}\rangle
        +
        \langle \nabla_{C_{2}}C_{1}, C_{4}\rangle
        &=
        -
        \beta\langle \nabla_{C_{4}}C_{1}, C_{3}\rangle\notag\\
        &&&\quad
        +
        \langle\nabla_{C_{4}}C_{1}, C_{2}\rangle
        -
        \beta\langle \nabla_{C_{4}}C_{2}, C_{4}\rangle
        ,\label{eq:12}\\
        &\{C_{1}, C_{3}\}C_{2};
        &
        \langle \nabla_{C_{1}}C_{1}, C_{2}\rangle
        -
        \beta\langle \nabla_{C_{1}}C_{2}, C_{4}\rangle
        &-
        \beta\langle \nabla_{C_{1}}C_{1}, C_{3}\rangle
        =
        0
        .
        \label{eq:13}
    \end{align}
    We note that $\beta\neq 0$ and Lemma \ref{lemm:2}.
    We have $\langle \nabla_{C_{1}}C_{3}, C_{4}\rangle=0$ by solving the system of linear equations \eqref{eq:9}, \eqref{eq:10}, \eqref{eq:11}, \eqref{eq:12}, and \eqref{eq:13}.
    Similarly, we have $\langle \nabla_{C_{2}}C_{3}, C_{4}\rangle=0$ by solving the system of linear equations $\{C_{2}, C_{3}\}C_{3}$, $\{C_{2}, C_{4}\}C_{4}$, $\{C_{1}, C_{3}\}C_{4}$, $\{C_{1}, C_{4}\}C_{3}$, and $\{C_{2}, C_{4}\}C_{1}$.
\end{proof}
\begin{proposition}
    \label{prop:3}
    We have $\kappa+\nu(\alpha^{2}+\beta^{2})=0$.
\end{proposition}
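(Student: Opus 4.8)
The plan is to compute the single curvature component $\langle R(C_1,C_2)C_3,C_4\rangle$ in two ways and compare. First I would evaluate it via the Gauss equation \eqref{eq:Gauss} with $(X,Y,Z,W)=(C_1,C_2,C_3,C_4)$. Using the given pairings ($\langle C_1,C_3\rangle=-1$, $\langle C_2,C_4\rangle=1$, all other pairings among $C_1,C_2,C_3,C_4$ being $0$), the $\kappa$-part collapses to $\kappa\{\langle C_2,C_3\rangle\langle C_1,C_4\rangle-\langle C_1,C_3\rangle\langle C_2,C_4\rangle\}=\kappa$; and using $AC_1=\alpha C_1+\beta C_2$, $AC_2=-\beta C_1+\alpha C_2$, the $\nu$-part collapses to $\nu\{\langle AC_2,C_3\rangle\langle AC_1,C_4\rangle-\langle AC_1,C_3\rangle\langle AC_2,C_4\rangle\}=\nu\{\beta\cdot\beta-(-\alpha)\cdot\alpha\}=\nu(\alpha^{2}+\beta^{2})$. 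Thus $\langle R(C_1,C_2)C_3,C_4\rangle=\kappa+\nu(\alpha^{2}+\beta^{2})$, and the whole proposition reduces to showing that this component is $0$.

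For the vanishing I would use the intrinsic formula \eqref{eq:R}, after first recording some consequences of metric compatibility. Since $\langle C_3,C_3\rangle=\langle C_4,C_4\rangle=\langle C_3,C_4\rangle=0$ are constant, for $i\in\{1,2\}$ we get $\langle\nabla_{C_i}C_3,C_3\rangle=\langle\nabla_{C_i}C_4,C_4\rangle=0$ and $\langle\nabla_{C_i}C_4,C_3\rangle=-\langle\nabla_{C_i}C_3,C_4\rangle$, the latter being $0$ by Lemma \ref{lemm:4}. Reading off coefficients in the basis $\{C_1,C_2,C_3,C_4\}$, these identities force each of $\nabla_{C_1}C_3$, $\nabla_{C_2}C_3$, $\nabla_{C_1}C_4$, $\nabla_{C_2}C_4$ to lie in the subspace spanned by $C_3$ and $C_4$, which is totally isotropic; hence $\langle\nabla_{C_i}C_3,\nabla_{C_j}C_4\rangle=0$ for all $i,j\in\{1,2\}$. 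In the same spirit, $\langle\nabla_{C_2}C_1,C_1\rangle=\langle\nabla_{C_1}C_2,C_2\rangle=0$ by metric compatibility, $\langle\nabla_{C_1}C_2,C_1\rangle=-\langle\nabla_{C_1}C_1,C_2\rangle=0$ and $\langle\nabla_{C_2}C_1,C_2\rangle=0$ by Lemma \ref{lemm:2}, so $\langle[C_1,C_2],C_1\rangle=\langle[C_1,C_2],C_2\rangle=0$, i.e. $[C_1,C_2]$ lies in the subspace spanned by $C_1$ and $C_2$.

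Then I would expand \eqref{eq:R} at $(C_1,C_2,C_3,C_4)$, combining the last two summands (using that $\nabla$ is torsion-free) into $-\langle\nabla_{[C_1,C_2]}C_3,C_4\rangle$. Rewriting $\langle\nabla_{C_1}\nabla_{C_2}C_3,C_4\rangle=C_1\langle\nabla_{C_2}C_3,C_4\rangle-\langle\nabla_{C_2}C_3,\nabla_{C_1}C_4\rangle$, the first term is $0$ since $\langle\nabla_{C_2}C_3,C_4\rangle\equiv 0$ by Lemma \ref{lemm:4}, and the second is $0$ by the isotropy observation; the $\langle\nabla_{C_2}\nabla_{C_1}C_3,C_4\rangle$ term vanishes in exactly the same way; and $\langle\nabla_{[C_1,C_2]}C_3,C_4\rangle=0$ because $[C_1,C_2]$ is a combination of $C_1$ and $C_2$ while $\langle\nabla_{C_1}C_3,C_4\rangle=\langle\nabla_{C_2}C_3,C_4\rangle=0$. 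Hence $\langle R(C_1,C_2)C_3,C_4\rangle=0$, and comparing with the Gauss computation yields $\kappa+\nu(\alpha^{2}+\beta^{2})=0$.

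The only genuinely nontrivial step is deciding which curvature component to inspect: once $\langle R(C_1,C_2)C_3,C_4\rangle$ is chosen, the Gauss equation instantly produces $\kappa+\nu(\alpha^{2}+\beta^{2})$, and the very special shape of the metric in this type (the pairs $\{C_1,C_2\}$ and $\{C_3,C_4\}$ each spanning an isotropic $2$-plane) makes every term of \eqref{eq:R} collapse using only Lemmas \ref{lemm:2} and \ref{lemm:4} together with metric compatibility. I do not expect to need any Codazzi equation beyond those already invoked in the proofs of those two lemmas.
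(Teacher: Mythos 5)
Your proposal is correct and follows essentially the same route as the paper: both compute $\langle R(C_{1},C_{2})C_{3},C_{4}\rangle$ once via the Gauss equation, obtaining $\kappa+\nu(\alpha^{2}+\beta^{2})$, and once via \eqref{eq:R}, where Lemmas \ref{lemm:2} and \ref{lemm:4} together with metric compatibility force every term to vanish. The only cosmetic difference is that you merge the last two terms of \eqref{eq:R} into a single Lie-bracket term and invoke the isotropy of $\mathrm{span}\{C_{3},C_{4}\}$, while the paper expands $\nabla_{C_{2}}C_{3}$, $\nabla_{C_{1}}C_{3}$, $\nabla_{C_{1}}C_{2}$, $\nabla_{C_{2}}C_{1}$ explicitly in the frame and kills the four terms separately.
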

\begin{proof}
    Using the Gauss equation, we have $\langle R(C_{1}, C_{2})C_{3}, C_{4}\rangle=\kappa+\nu(\alpha^{2}+\beta^{2})$.
    By Lemmas \ref{lemm:2} and \ref{lemm:4}, and \eqref{eq:R}, we have
    \begin{align*}
        \nabla_{C_{2}}C_{3}&=-\langle\nabla_{C_{2}}C_{3}, C_{1}\rangle C_{3}+\langle\nabla_{C_{2}}C_{3}, C_{2}\rangle C_{4},&\langle \nabla_{C_{1}}(\nabla_{C_{2}}C_{3}), C_{4}\rangle&=0,\\
        \nabla_{C_{1}}C_{3}&=-\langle\nabla_{C_{1}}C_{3}, C_{1}\rangle C_{3}+\langle\nabla_{C_{1}}C_{3}, C_{2}\rangle C_{4},&-\langle \nabla_{C_{2}}(\nabla_{C_{1}}C_{3}), C_{4}\rangle&=0,\\
        \nabla_{C_{1}}C_{2}&=-\langle\nabla_{C_{1}}C_{2}, C_{3}\rangle C_{1}+\langle\nabla_{C_{1}}C_{2}, C_{4}\rangle C_{2},&-\langle \nabla_{\nabla_{C_{1}}C_{2}}C_{3}, C_{4}\rangle&=0,\\
        \nabla_{C_{2}}C_{1}&=-\langle\nabla_{C_{2}}C_{1}, C_{3}\rangle C_{1}+\langle\nabla_{C_{2}}C_{1}, C_{4}\rangle C_{2},&\langle \nabla_{\nabla_{C_{2}}C_{1}}C_{3}, C_{4}\rangle&=0.
    \end{align*}
\end{proof}
We can prove Theorem \ref{theo:ne-1} by Proposition \ref{prop:3} in $\delta=0$ or $\delta=1$.
\begin{corollary}
    If $\lambda$ is a complex principal curvature of an isoparametric hypersurface whose Petrov type is type I of index 2 in $S^{5}_{3}$, then $|\lambda|=1$ holds.
\end{corollary}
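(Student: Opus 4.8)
The plan is to deduce the corollary directly from Proposition \ref{prop:3}, which was established for all three admissible ambient spaces $\mathbb{R}^{5}_{2}$, $S^{5}_{2}$, and $S^{5}_{3}$ under the standing hypothesis that $M$ carries a type I Petrov structure of index 2. First I would pin down the constants $\kappa$ and $\nu$ in the case $M\subset S^{5}_{3}$. Since $S^{5}_{3}$ has constant curvature $\kappa=1$, and since a $4$-dimensional hypersurface of index $2$ sitting inside the $5$-dimensional ambient space of index $3$ is forced to have a timelike unit normal, we get $\nu=\langle\xi,\xi\rangle=-1$; equivalently $\delta=\kappa\nu=-1$, as recorded in the $S^{5}_{3}$ row of Table \ref{tab:1}.

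Next I would substitute $\kappa=1$ and $\nu=-1$ into the identity $\kappa+\nu(\alpha^{2}+\beta^{2})=0$ of Proposition \ref{prop:3}, which yields $\alpha^{2}+\beta^{2}=1$. By the description of type I of index 2 in Section \ref{sec:3-3}, together with the local frame field $C_{1},C_{2},C_{3},C_{4}$ and the matrix form of $A$ fixed at the beginning of Section \ref{sec:4-3-1}, the complex principal curvatures of $M$ are exactly $\alpha\pm\beta\sqrt{-1}$ with $\beta>0$. Hence any complex principal curvature $\lambda$ satisfies $|\lambda|^{2}=\alpha^{2}+\beta^{2}=1$, so $|\lambda|=1$, as claimed.

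There is essentially no obstacle: the whole content of the corollary is already packaged in Proposition \ref{prop:3}, and the only point requiring attention is the sign bookkeeping for $\nu$ in $S^{5}_{3}$. This same bookkeeping is what makes the computation self-defeating in the other two cases and so yields Theorem \ref{theo:ne-1}: when $\kappa=0$ with $\nu=\pm1$, or when $\kappa=\nu=1$, one has $\kappa+\nu(\alpha^{2}+\beta^{2})>0$ because $\beta>0$, contradicting Proposition \ref{prop:3}. The only mild care needed, if one wants to be fully rigorous, is to note that the $\alpha,\beta$ occurring in the local frame field are genuinely the real and imaginary parts of $\lambda$, which is immediate from the action of $A$ on $C_{1},C_{2},C_{3},C_{4}$.
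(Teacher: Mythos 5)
Your proposal is correct and is exactly the argument the paper intends: the corollary is stated as an immediate consequence of Proposition \ref{prop:3}, obtained by substituting $\kappa=1$ and $\nu=-1$ (the $\delta=-1$ case for $S^{5}_{3}$) into $\kappa+\nu(\alpha^{2}+\beta^{2})=0$ and observing that the complex principal curvatures are $\alpha\pm\beta\sqrt{-1}$. Your sign bookkeeping for $\nu$ and the accompanying remark on how the same identity yields Theorem \ref{theo:ne-1} in the cases $\delta=0,1$ both match the paper.
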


\subsubsection{Type II}\label{sec:4-3-2}
\begin{theorem}
    \label{theo:ne-2}
    There exist no isoparametric hypersurfaces whose Petrov type is type II of index 2 in $\mathbb{R}^{5}_{2}$ or $S^{5}_{2}$ in the case where the multiplicities of the complex principal curvatures are $2$.
\end{theorem}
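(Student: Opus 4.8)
The plan is to argue by contradiction, mirroring the proof of Theorem \ref{theo:ne-1} (Proposition \ref{prop:3} and the lemmas preceding it). Suppose $M$ is an isoparametric hypersurface of Petrov type II of index $2$ in $\mathbb{R}^5_2$ or $S^5_2$ whose complex principal curvature $\lambda=\alpha+\beta\sqrt{-1}$ ($\beta>0$) has multiplicity $2$. Since $\alpha,\beta$ are constant and the multiplicity is fixed, the Jordan normal form of $A$ is constant on $M$, so Proposition \ref{prop:1} yields, on a neighborhood $U_x$ of any point, a local frame $C_1,C_2,C_3,C_4$ realizing the type II normal form
\[
AC_1=\alpha C_1+\beta C_2,\quad AC_2=-\beta C_1+\alpha C_2,\quad AC_3=\alpha C_3+\beta C_4,\quad AC_4=-\beta C_3+\alpha C_4,
\]
with $\langle C_1,C_1\rangle=\langle C_3,C_3\rangle=-1$, $\langle C_2,C_2\rangle=\langle C_4,C_4\rangle=1$, and $\langle C_i,C_j\rangle=0$ for $i\neq j$. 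Here the hypothesis ``multiplicities are $2$'' is essential: it forces the two $2\times2$ blocks of $A$ to carry the \emph{same} $\alpha,\beta$, which is what makes the frame relations symmetric under swapping the two blocks; the general type II case with distinct complex principal curvatures is exactly the $\triangle$ left open in Table \ref{tab:1}.

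Next I would record the vanishing of certain connection coefficients. Since $\nabla$ preserves $\langle\cdot,\cdot\rangle$ and each $C_i$ has constant norm, $\langle\nabla_{C_i}C_j,C_k\rangle=-\langle\nabla_{C_i}C_k,C_j\rangle$, and in particular $\langle\nabla_{C_i}C_j,C_j\rangle=0$. By Remark \ref{rem:1}, Lemma \ref{lemm:2} applies verbatim, giving $\langle\nabla_{C_1}C_1,C_2\rangle=\langle\nabla_{C_2}C_1,C_2\rangle=0$. I would then establish the analogue of Lemma \ref{lemm:4} in this setting, namely $\langle\nabla_{C_1}C_3,C_4\rangle=\langle\nabla_{C_2}C_3,C_4\rangle=0$ together with the block-swapped relations and whatever additional cross-terms among $\langle\nabla_{C_i}C_j,C_k\rangle$ are needed, by writing out the Codazzi equations $\{C_i,C_j\}C_k$ of \eqref{eq:Codazzi} for a suitable list of triples and solving the resulting linear system; as in \eqref{eq:9}--\eqref{eq:13}, it is $\beta\neq0$ that renders the system non-degenerate.

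With these relations in hand, the contradiction comes from computing $\langle R(C_1,C_2)C_1,C_2\rangle$ in two ways. On one hand, \eqref{eq:Gauss} gives directly $\langle R(C_1,C_2)C_1,C_2\rangle=\kappa+\nu(\alpha^2+\beta^2)$, using $\langle AC_1,C_2\rangle=\langle AC_2,C_1\rangle=\beta$, $\langle AC_1,C_1\rangle=-\alpha$, $\langle AC_2,C_2\rangle=\alpha$. On the other hand, expanding \eqref{eq:R} and inserting the vanishing connection coefficients from the previous step (which, exactly as in the proof of Proposition \ref{prop:3}, reduce $\nabla_{C_1}C_1$, $\nabla_{C_1}C_2$, $\nabla_{C_2}C_1$ to combinations of $C_3,C_4$ on which the surviving second covariant derivatives have zero $C_2$-pairing) yields $\langle R(C_1,C_2)C_1,C_2\rangle=0$. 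Hence $\kappa+\nu(\alpha^2+\beta^2)=0$, equivalently $\delta+\alpha^2+\beta^2=0$ after multiplying by $\nu$. Since $\beta>0$, this is impossible when $\delta=0$ (ambient $\mathbb{R}^5_2$) or $\delta=1$ (ambient $S^5_2$), which proves the theorem.

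The main obstacle I anticipate is the derivation of the Lemma \ref{lemm:4} analogue and its companions: one must choose enough Codazzi triples to pin down \emph{all} connection coefficients that survive in the expansion of $\langle R(C_1,C_2)C_1,C_2\rangle$, and because the two $2\times2$ blocks share the eigenvalue $\lambda$, several of the Codazzi equations coincide or become linearly dependent, so the bookkeeping needs care (in contrast to the type~I case, the metric here is diagonal rather than off-diagonal, so the relevant two-plane is not totally isotropic and the argument of Proposition \ref{prop:3} must be adapted accordingly). As a by-product the same identity $\kappa+\nu(\alpha^2+\beta^2)=0$ holds in $S^5_3$, which is consistent with Example \ref{exam:j}, where $\alpha=0$, $\beta=1$, and $\delta=-1$.
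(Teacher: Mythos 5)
Your overall strategy is the same as the paper's: set up the type II frame with $\alpha_1=\alpha_2$, $\beta_1=\beta_2$ forced by the multiplicity hypothesis, compute $\langle R(C_1,C_2)C_1,C_2\rangle=\kappa+\nu(\alpha^2+\beta^2)$ from the Gauss equation, show the same quantity vanishes via \eqref{eq:R} and Codazzi, and conclude $\delta+\alpha^2+\beta^2=0$ is impossible for $\delta\in\{0,1\}$. The one place your sketch goes wrong is the mechanism by which the \eqref{eq:R}-expansion vanishes, precisely at the obstacle you flagged. Because the metric on the $C_1,C_2$-plane is diagonal rather than isotropic, the Codazzi system does \emph{not} force the cross-coefficients $a^i_1=\langle\nabla_{C_1}C_1,D_i\rangle,\dots,a^i_4=\langle\nabla_{C_2}C_2,D_i\rangle$ to vanish individually, and the terms $\langle\nabla_{C_1}\nabla_{C_2}C_1,C_2\rangle=-\langle\nabla_{C_2}C_1,\nabla_{C_1}C_2\rangle$ and $\langle\nabla_{C_2}\nabla_{C_1}C_1,C_2\rangle=-\langle\nabla_{C_1}C_1,\nabla_{C_2}C_2\rangle$ do \emph{not} individually have ``zero $C_2$-pairing''; they are nontrivial quadratic expressions in the $a^i_j$. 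What the paper's Lemma \ref{lemm:11} actually extracts from the six Codazzi equations $\{D_k,C_i\}C_j$ is (i) $\langle\nabla_{D_1}C_1,C_2\rangle=\langle\nabla_{D_2}C_1,C_2\rangle=0$, which kills the two $\nabla_{\nabla C}$-terms, and (ii) the linear relations $a^2_2=-a^1_1$, $a^1_2=a^2_1$, $a^2_3=a^1_4$, $a^1_3=-a^2_4$, whose consequence is the quadratic identity $a_3^1a_2^1-a_3^2a_2^2-a_1^1a_4^1+a_1^2a_4^2=0$; it is only this combination, not the individual coefficients, that vanishes, and it is exactly the combination appearing in the difference of the two second-derivative terms. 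Also note that the vanishing you propose to prove, $\langle\nabla_{C_1}C_3,C_4\rangle=\langle\nabla_{C_2}C_3,C_4\rangle=0$ (i.e.\ $\langle\nabla_{C_i}D_1,D_2\rangle=0$), is not the quantity that enters the expansion of $\langle R(C_1,C_2)C_1,C_2\rangle$; the relevant one is $\langle\nabla_{D_i}C_1,C_2\rangle$, with the derivative taken in the $D$-directions. With these corrections the argument closes exactly as you describe, and your remark that the same identity is consistent with Example \ref{exam:j} in $S^5_3$ matches the paper's Corollary \ref{coro:2}.
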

Suppose that there exists an isoparametric hypersurface $M$ whose Petrov type is type II of index 2 in $\mathbb{R}^{5}_{2}$, $S^{5}_{2}$ or $S^{5}_{3}$.
By Proposition \ref{prop:1}, there exists a local frame field $C_{1}, C_{2}, D_{1}, D_{2}$ such that
\begin{equation*}
    AC_{1}=\alpha_{1} C_{1}+\beta_{1} C_{2},\quad
    AC_{2}=-\beta_{1} C_{1}+\alpha_{1} C_{2},\quad
    AD_{1}=\alpha_{2} C_{3}+\beta_{2} C_{4},\quad
    AD_{2}=-\beta_{2} C_{3}+\alpha_{2} C_{4},\\
\end{equation*}
\begin{equation*}
    \langle C_{i}, C_{j}\rangle
    =
    \langle D_{i}, D_{j}\rangle
    =
    \begin{cases}
        (-1)^{i}, & (i=j),\\
        0, & (\text{otherwise}).
    \end{cases}
\end{equation*}
Let us introduce the following notation:
\begin{align*}
    a_{1}^{i}
    &:=
    \langle \nabla_{C_{1}}C_{1}, D_{i}\rangle,
    &
    a_{2}^{i}
    &:=
    \langle \nabla_{C_{1}}C_{2}, D_{i}\rangle,
    &
    a_{3}^{i}
    &:=
    \langle \nabla_{C_{2}}C_{1}, D_{i}\rangle,
    &
    a_{4}^{i}
    &:=
    \langle \nabla_{C_{2}}C_{2}, D_{i}\rangle,
    &
    i\in \{1, 2\}.
\end{align*}
\begin{lemma}
    \label{lemm:11}
    If $\alpha_{2}=\alpha_{1}$ and $\beta_{2}=\beta_{1}$, we have $\langle \nabla_{D_{1}}C_{1}, C_{2}\rangle=\langle \nabla_{D_{2}}C_{1}, C_{2}\rangle=0$ and $a_{3}^{1}a_{2}^{1}-a_{3}^{2}a_{2}^{2}-a_{1}^{1}a_{4}^{1}+a_{1}^{2}a_{4}^{2}=0$.
\end{lemma}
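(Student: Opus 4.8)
\textbf{Proof plan for Lemma \ref{lemm:11}.}
The plan is to mimic the strategy used for Lemma \ref{lemm:2} and Lemma \ref{lemm:4}: extract linear relations among Christoffel-type symbols from several instances of the Codazzi equation $\{X,Y\}Z$, and then solve the resulting linear system. Throughout we work under the standing hypothesis $\alpha_2=\alpha_1=:\alpha$, $\beta_2=\beta_1=:\beta$ with $\beta\neq 0$, so that $A$ acts on $C_1,C_2$ and on $D_1,D_2$ by the same rotation-type block; crucially $AC_i$ lies in $\mathrm{span}\{C_1,C_2\}$ while $AD_i$ lies in $\mathrm{span}\{C_3,C_4\}$, but here $C_3,C_4$ are \emph{not} part of our frame — wait, re-reading the type II setup, the frame is $C_1,C_2,D_1,D_2$ and the image vectors are $C_3,C_4$; I will instead read $AD_1=\alpha D_1+\beta D_2$-type relations consistently with the frame, i.e.\ treat $\{C_1,C_2,D_1,D_2\}$ as the orthonormal-up-to-sign basis on which both $A$ and $\langle\cdot,\cdot\rangle$ have the declared block forms. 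By Remark \ref{rem:1}, Lemma \ref{lemm:2} is available, giving $\langle\nabla_{C_1}C_1,C_2\rangle=\langle\nabla_{C_2}C_1,C_2\rangle=0$.

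First I would write out $\{C_1,D_1\}D_1$, $\{C_2,D_1\}D_1$, $\{C_1,D_2\}D_2$, $\{C_2,D_2\}D_2$ together with the mixed versions $\{C_1,D_1\}D_2$, $\{C_1,D_2\}D_1$, $\{C_2,D_1\}D_2$, $\{C_2,D_2\}D_1$, projecting each onto a well-chosen basis vector. Because $A$ is semisimple on both $2$-planes with the same eigenvalues $\alpha\pm\beta\sqrt{-1}$, many terms of the form $\langle\nabla_X(AY),Z\rangle$ collapse to $\alpha\langle\nabla_XY,Z\rangle\pm\beta\langle\nabla_XY',Z\rangle$, so the equations become linear in the symbols $\langle\nabla_{D_i}C_1,C_2\rangle$ and the $a^i_k$. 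The target quantities $\langle\nabla_{D_1}C_1,C_2\rangle$ and $\langle\nabla_{D_2}C_1,C_2\rangle$ should appear directly in the Codazzi relations $\{C_1,D_i\}C_2$ and $\{C_2,D_i\}C_1$; combining those with Lemma \ref{lemm:2} and the antisymmetry $\langle\nabla_XC_1,C_2\rangle=-\langle\nabla_XC_2,C_1\rangle$ should force them to vanish, exactly as in the proof of Lemma \ref{lemm:4}.

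For the quadratic identity $a_3^1a_2^1-a_3^2a_2^2-a_1^1a_4^1+a_1^2a_4^2=0$, the plan is to feed the now-established vanishings ($\langle\nabla_{C_1}C_1,C_2\rangle=\langle\nabla_{C_2}C_1,C_2\rangle=0$, $\langle\nabla_{D_i}C_1,C_2\rangle=0$) into the Gauss equation for the component $\langle R(C_1,C_2)D_1,D_2\rangle$ and compute this curvature component a second time from the formula \eqref{eq:R}. On one side the Gauss equation \eqref{eq:Gauss} gives $\langle R(C_1,C_2)D_1,D_2\rangle=\nu\{\langle AC_2,D_1\rangle\langle AC_1,D_2\rangle-\langle AC_1,D_1\rangle\langle AC_2,D_2\rangle\}$, which vanishes since $AC_1,AC_2\in\mathrm{span}\{C_1,C_2\}\perp\mathrm{span}\{D_1,D_2\}$; on the other side, expanding $\nabla_{C_1}\nabla_{C_2}D_1-\nabla_{C_2}\nabla_{C_1}D_1-\nabla_{[C_1,C_2]}D_1$ and pairing with $D_2$, the first-derivative terms $\nabla_{C_i}D_1$ are controlled by the $a^j_k$ (via $\langle\nabla_{C_i}D_1,C_j\rangle=-\langle\nabla_{C_i}C_j,D_1\rangle$ up to sign), and the quadratic cross-terms assemble precisely into $a_3^1a_2^1-a_3^2a_2^2-a_1^1a_4^1+a_1^2a_4^2$ after using the sign pattern $\langle C_i,C_i\rangle=\langle D_i,D_i\rangle=(-1)^i$.

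The main obstacle will be bookkeeping: keeping the $(-1)^i$ signs from the indefinite metric consistent across the many Codazzi expansions, and verifying that the genuinely quadratic terms in the second computation of $\langle R(C_1,C_2)D_1,D_2\rangle$ collapse to exactly the four-term combination claimed (with the stated signs) rather than a superficially different expression — this is where I expect to spend the bulk of the routine effort, and where a sign slip would be easy. I would cross-check the final quadratic identity by also computing $\langle R(C_1,C_2)D_2,D_1\rangle$ and confirming the answer is antisymmetric in $D_1\leftrightarrow D_2$ as it must be.
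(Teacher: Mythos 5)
Your plan for the first assertion is essentially the paper's: write down Codazzi equations mixing the $C$'s and $D$'s and solve the resulting linear system, using $\beta_1=\beta_2\neq 0$ and $\alpha_1-\alpha_2=0$. One correction, though: the instances you list first, $\{C_i,D_j\}D_k$, govern the mirror quantities $\langle\nabla_{C_i}D_1,D_2\rangle$ and $\langle\nabla_{D_j}D_k,C_i\rangle$, not the $a^i_k$. The system that actually involves the $a^i_k$ and $\langle\nabla_{D_i}C_1,C_2\rangle$ is the one with a $C$ in the last slot, namely $\{D_1,C_1\}C_1$, $\{D_1,C_1\}C_2$, $\{D_1,C_2\}C_1$, $\{D_2,C_1\}C_2$, $\{D_2,C_2\}C_1$, $\{D_2,C_2\}C_2$ (equations \eqref{eq:50}--\eqref{eq:57} of the paper). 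Under the hypothesis these reduce to $a_2^2=-a_1^1$, $a_3^2=a_4^1$, $a_2^1=a_1^2$, $a_3^1=-a_4^2$, together with $-2\langle\nabla_{D_1}C_1,C_2\rangle=a_2^1-a_1^2$ and $-2\langle\nabla_{D_2}C_1,C_2\rangle=a_4^1-a_3^2$, whence the two vanishings.

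For the quadratic identity your curvature route both overshoots and undershoots. It undershoots because the \eqref{eq:R}-expansion of $\langle R(C_1,C_2)D_1,D_2\rangle$ is not purely quadratic in the $a^i_k$: it contains the derivative terms $C_1\langle\nabla_{C_2}D_1,D_2\rangle-C_2\langle\nabla_{C_1}D_1,D_2\rangle$ and, from the $\nabla_{\nabla_{C_1}C_2}D_1$ and $\nabla_{\nabla_{C_2}C_1}D_1$ terms, contributions proportional to $\langle\nabla_{D_i}D_1,D_2\rangle$. None of these are controlled by what you have established; killing them requires the mirror statements $\langle\nabla_{C_i}D_1,D_2\rangle=0$ and $\langle\nabla_{D_i}D_1,D_2\rangle=0$ (the $C\leftrightarrow D$ analogues of the first assertion and of Lemma \ref{lemm:2}), which your proposal does not prove. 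Even granting those, the quadratic combination that emerges is $a_3^1a_1^2-a_4^1a_2^2-a_1^1a_3^2+a_2^1a_4^2$, which matches the stated $a_3^1a_2^1-a_3^2a_2^2-a_1^1a_4^1+a_1^2a_4^2$ only after invoking the linear relations $a_2^1=a_1^2$ and $a_3^2=a_4^1$. And it overshoots because once you have all four linear relations above --- which you need anyway for the first assertion --- substituting $a_3^1=-a_4^2$, $a_3^2=a_4^1$, $a_2^1=a_1^2$, $a_2^2=-a_1^1$ into the stated expression makes it vanish identically, with no curvature computation at all. That direct substitution is the paper's proof; the Gauss-equation step you describe is deferred to Proposition \ref{prop:7}, where the quadratic identity is precisely what forces $\langle R(C_1,C_2)C_1,C_2\rangle=0$.
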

\begin{proof}
    We have
    \begin{align}
        &\{D_{1}, C_{1}\}C_{1};
        &
        -2\beta_{1}\langle\nabla_{D_{1}}C_{1}, C_{2}\rangle
        &=
        (\alpha_{1}-\alpha_{2})a_{1}^{1}-\beta_{2} a_{1}^{2}+\beta_{1} a_{2}^{1},\label{eq:50}\\
        &\{D_{1}, C_{1}\}C_{2};
        &
        0
        &=
        (\alpha_{1}-\alpha_{2})a_{2}^{1}-\beta_{2} a_{2}^{2}-\beta_{1} a_{1}^{1},\label{eq:51}\\
        &\{D_{1}, C_{2}\}C_{1};
        &
        0
        &=
        (\alpha_{1}-\alpha_{2})a_{3}^{1}-\beta_{2} a_{3}^{2}+\beta_{1} a_{4}^{1},\label{eq:52}\\
        &\{D_{2}, C_{1}\}C_{2};
        &
        0
        &=
        (\alpha_{1}-\alpha_{2})a_{2}^{2}+\beta_{2} a_{2}^{1}-\beta_{1} a_{1}^{2},\label{eq:55}\\
        &\{D_{2}, C_{2}\}C_{1};
        &
        0
        &=
        (\alpha_{1}-\alpha_{2})a_{3}^{2}+\beta_{2} a_{3}^{1}+\beta_{1} a_{4}^{2},\label{eq:56}\\
        &\{D_{2}, C_{2}\}C_{2};
        &
        -2\beta_{1}\langle\nabla_{D_{2}}C_{1}, C_{2}\rangle
        &=
        (\alpha_{1}-\alpha_{2})a_{4}^{2}+\beta_{2} a_{4}^{1}-\beta_{1} a_{3}^{2}.\label{eq:57}
    \end{align}
    We note the assumption.
    We should solve the system of linear equations \eqref{eq:50}, \eqref{eq:51}, \eqref{eq:52}, \eqref{eq:55}, \eqref{eq:56}, and \eqref{eq:57}.
\end{proof}
\begin{proposition}
    \label{prop:7}
    If $\alpha_{2}=\alpha_{1}$ and $\beta_{2}=\beta_{1}$, we have $\kappa+\nu((\alpha_{1})^{2}+(\beta_{1})^{2})=0$.
\end{proposition}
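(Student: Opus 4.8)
The plan is to evaluate the curvature component $\langle R(C_{1}, C_{2})C_{1}, C_{2}\rangle$ in two ways, exactly in the spirit of the proof of Proposition \ref{prop:3}. First, I would apply the Gauss equation \eqref{eq:Gauss} with $X=Z=C_{1}$ and $Y=W=C_{2}$; substituting $AC_{1}=\alpha_{1}C_{1}+\beta_{1}C_{2}$, $AC_{2}=-\beta_{1}C_{1}+\alpha_{1}C_{2}$ together with $\langle C_{1}, C_{1}\rangle=-1$, $\langle C_{2}, C_{2}\rangle=1$, $\langle C_{1}, C_{2}\rangle=0$, a short computation gives $\langle R(C_{1}, C_{2})C_{1}, C_{2}\rangle=\kappa+\nu((\alpha_{1})^{2}+(\beta_{1})^{2})$. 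This step uses neither the hypothesis $\alpha_{2}=\alpha_{1}$, $\beta_{2}=\beta_{1}$ nor the $D_{i}$; the assumption will enter only through Lemma \ref{lemm:11}.

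Secondly, I would compute the same component from \eqref{eq:R}. Because $\nabla$ preserves $\langle\cdot,\cdot\rangle$, we have $\langle\nabla_{X}C_{i}, C_{i}\rangle=0$ and $\langle\nabla_{X}D_{i}, C_{j}\rangle=-\langle\nabla_{X}C_{j}, D_{i}\rangle$ for every $X$. Combining these with Lemma \ref{lemm:2} (available here by Remark \ref{rem:1}) and $\langle C_{i}, D_{j}\rangle=0$, and resolving vectors in the frame $C_{1}, C_{2}, D_{1}, D_{2}$ (whose Gram matrix is $\mathrm{diag}(-1, 1, -1, 1)$), one obtains $\nabla_{C_{1}}C_{1}=-a_{1}^{1}D_{1}+a_{1}^{2}D_{2}$, $\nabla_{C_{2}}C_{1}=-a_{3}^{1}D_{1}+a_{3}^{2}D_{2}$, and $\nabla_{C_{1}}C_{2}=-a_{2}^{1}D_{1}+a_{2}^{2}D_{2}$. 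Plugging these into the four terms of \eqref{eq:R}: the terms $-\langle\nabla_{\nabla_{C_{1}}C_{2}}C_{1}, C_{2}\rangle$ and $\langle\nabla_{\nabla_{C_{2}}C_{1}}C_{1}, C_{2}\rangle$ vanish, since $\nabla_{C_{1}}C_{2}$ and $\nabla_{C_{2}}C_{1}$ are combinations of $D_{1}$ and $D_{2}$ while $\langle\nabla_{D_{1}}C_{1}, C_{2}\rangle=\langle\nabla_{D_{2}}C_{1}, C_{2}\rangle=0$ by Lemma \ref{lemm:11}; and the other two terms, after differentiating the coefficients and throwing away the parts paired with $\langle D_{i}, C_{2}\rangle=0$, simplify to $\langle\nabla_{C_{1}}\nabla_{C_{2}}C_{1}, C_{2}\rangle=a_{3}^{1}a_{2}^{1}-a_{3}^{2}a_{2}^{2}$ and $-\langle\nabla_{C_{2}}\nabla_{C_{1}}C_{1}, C_{2}\rangle=-a_{1}^{1}a_{4}^{1}+a_{1}^{2}a_{4}^{2}$. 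Hence $\langle R(C_{1}, C_{2})C_{1}, C_{2}\rangle=a_{3}^{1}a_{2}^{1}-a_{3}^{2}a_{2}^{2}-a_{1}^{1}a_{4}^{1}+a_{1}^{2}a_{4}^{2}$.

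Finally, the quadratic identity in Lemma \ref{lemm:11} says exactly that the right-hand side here is $0$, so equating the two evaluations gives $\kappa+\nu((\alpha_{1})^{2}+(\beta_{1})^{2})=0$, which is the assertion. The part that demands the most care is the sign bookkeeping forced by the indefinite metric $\mathrm{diag}(-1, 1, -1, 1)$: one must correctly track the signs when expanding a vector in the (non-orthonormal) frame and confirm that the derivative-of-coefficient contributions genuinely cancel, which they do because each such term is multiplied by $\langle D_{i}, C_{2}\rangle=0$. The structurally crucial feature is that both conclusions of Lemma \ref{lemm:11} are needed: the vanishing of $\langle\nabla_{D_{i}}C_{1}, C_{2}\rangle$ kills two of the four terms of \eqref{eq:R}, and the quadratic relation disposes of the remaining two.
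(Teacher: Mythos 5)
Your proposal is correct and follows the paper's own argument exactly: compute $\langle R(C_{1}, C_{2})C_{1}, C_{2}\rangle$ once via the Gauss equation to get $\kappa+\nu((\alpha_{1})^{2}+(\beta_{1})^{2})$, and once via \eqref{eq:R} using Lemma \ref{lemm:2} (through Remark \ref{rem:1}) and both conclusions of Lemma \ref{lemm:11} to get $0$. The sign bookkeeping in the frame with Gram matrix $\mathrm{diag}(-1,1,-1,1)$ checks out, and the residual quadratic expression $a_{3}^{1}a_{2}^{1}-a_{3}^{2}a_{2}^{2}-a_{1}^{1}a_{4}^{1}+a_{1}^{2}a_{4}^{2}$ is precisely the one Lemma \ref{lemm:11} annihilates.
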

\begin{proof}
    Using the Gauss equation, we have $\langle R(C_{1}, C_{2})C_{1}, C_{2}\rangle=\kappa+\nu((\alpha_{1})^{2}+(\beta_{1})^{2})$.
    We note Remark \ref{rem:1}.
    By Lemmas \ref{lemm:2} and \ref{lemm:11}, and \eqref{eq:R}, we have $\langle R(C_{1}, C_{2})C_{1}, C_{2}\rangle=0$.
\end{proof}
We can prove Theorem \ref{theo:ne-2} by Proposition \ref{prop:7}.
\begin{corollary}\label{coro:2}
    If $\lambda$ is a complex principal curvature of an isoparametric hypersurface whose Petrov type is type II of index 2 in $S^{5}_{3}$ in the case where the multiplicities of the complex principal curvatures are $2$, then $|\lambda|=1$ holds.
\end{corollary}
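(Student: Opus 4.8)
The plan is to obtain Corollary \ref{coro:2} as an immediate consequence of Proposition \ref{prop:7}; the only thing to check carefully is the translation of the multiplicity hypothesis and the identification of $\kappa$ and $\nu$ for $S^{5}_{3}$.

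First I would unpack what ``the multiplicities of the complex principal curvatures are $2$'' means for a Petrov type II hypersurface of index $2$. In dimension $4$ such a shape operator has complex eigenvalues $\alpha_{1}\pm\beta_{1}\sqrt{-1}$ and $\alpha_{2}\pm\beta_{2}\sqrt{-1}$ with $\beta_{1},\beta_{2}>0$, each arising from a single $2\times2$ block of type $C(j)$, and there are no real principal curvatures. If a complex principal curvature has multiplicity $2$, the two conjugate pairs must coincide, so $\alpha_{2}=\alpha_{1}$ and $\beta_{2}=\beta_{1}$; this is precisely the hypothesis of Proposition \ref{prop:7}. Writing $\lambda=\alpha_{1}+\beta_{1}\sqrt{-1}$ for the complex principal curvature, we have $|\lambda|^{2}=(\alpha_{1})^{2}+(\beta_{1})^{2}$.

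Next I would substitute the ambient data. For a $4$-dimensional hypersurface of index $2$ in $S^{5}_{3}$ the type is $\delta=-1$ (the entry recorded in Table \ref{tab:1}), and since the ambient curvature is $\kappa=1$ and $\delta=\kappa\nu$, this forces $\nu=-1$. Plugging these into the identity $\kappa+\nu((\alpha_{1})^{2}+(\beta_{1})^{2})=0$ supplied by Proposition \ref{prop:7} yields $1-|\lambda|^{2}=0$, that is $|\lambda|=1$, as claimed. There is no real obstacle here: all of the differential-geometric content has already been absorbed into Lemma \ref{lemm:11} and Proposition \ref{prop:7}, and what remains is exactly the bookkeeping just described—the one point worth stating explicitly in the write-up is that the multiplicity-$2$ hypothesis is precisely what licenses the application of Proposition \ref{prop:7}.
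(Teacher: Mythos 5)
Your proposal is correct and is essentially the argument the paper intends: the multiplicity-$2$ hypothesis forces $\alpha_{2}=\alpha_{1}$, $\beta_{2}=\beta_{1}$, so Proposition \ref{prop:7} applies, and with $\kappa=1$, $\nu=-1$ for $S^{5}_{3}$ the identity $\kappa+\nu((\alpha_{1})^{2}+(\beta_{1})^{2})=0$ gives $|\lambda|=1$. The paper leaves this bookkeeping implicit, so your explicit write-up matches its route exactly.
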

One of the examples of Corollary \ref{coro:2} is \ref{exam:j}.

\subsubsection{Type III}\label{sec:4-3-3}
\begin{theorem}\label{theo:ne-3}
    There exist no isoparametric hypersurfaces whose Petrov type is type III of index 2 in $\mathbb{R}^{5}_{2}$.
\end{theorem}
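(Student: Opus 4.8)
The plan is to argue by contradiction, mirroring the structure of the proofs of Theorems \ref{theo:ne-1} and \ref{theo:ne-2}. Suppose $M\subset\mathbb{R}^{5}_{2}$ is isoparametric with Petrov type III of index $2$; then $\kappa=0$, $\delta=0$, and $\nu\in\{-1,1\}$. By Proposition \ref{prop:1}, around each point of $M$ there is a local frame field $C_{1},C_{2},C_{3},C_{4}$ putting $(A,\langle\cdot,\cdot\rangle)$ into the type-III normal form, so that
\[
    AC_{1}=\alpha C_{1}+\beta C_{2},\qquad AC_{2}=-\beta C_{1}+\alpha C_{2},\qquad AC_{3}=a_{0}C_{3},\qquad AC_{4}=a_{1}C_{4},
\]
with $\langle C_{1},C_{1}\rangle=\langle C_{3},C_{3}\rangle=-1$, $\langle C_{2},C_{2}\rangle=\langle C_{4},C_{4}\rangle=1$, all remaining pairings $0$, $a_{0},a_{1},\alpha\in\mathbb{R}$, and $\beta>0$. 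Since $\nabla$ preserves $\langle\cdot,\cdot\rangle$ we have $\langle\nabla_{C_{i}}C_{j},C_{j}\rangle=0$, and Lemma \ref{lemm:2} (valid here by Remark \ref{rem:1}) gives $\langle\nabla_{C_{1}}C_{1},C_{2}\rangle=\langle\nabla_{C_{2}}C_{1},C_{2}\rangle=0$; together with skew-symmetry of $\langle\nabla_{C_{i}}\cdot,\cdot\rangle$ this forces $\langle\nabla_{C_{i}}C_{j},C_{k}\rangle=0$ for all $i,j,k\in\{1,2\}$.

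Next I would exploit the Codazzi equations. Because $C_{3},C_{4}$ are genuine eigenvectors, the relations $AC_{3}=a_{0}C_{3}$, $AC_{4}=a_{1}C_{4}$ turn each of $\{C_{i},C_{3}\}C_{j}$, $\{C_{i},C_{4}\}C_{j}$ ($i\in\{1,2\}$, $j\in\{1,2,3,4\}$) and $\{C_{3},C_{4}\}C_{j}$ into scalar identities whose coefficients involve only $\alpha,\beta,a_{0},a_{1}$. Using $\beta\neq0$ I would solve the resulting linear system to obtain a strengthening of Lemma \ref{lemm:4}: namely that $\nabla_{C_{i}}C_{j}\in\mathrm{span}\{C_{1},C_{2}\}$ for all $i,j\in\{1,2\}$ and that $\langle\nabla_{C_{3}}C_{1},C_{2}\rangle=\langle\nabla_{C_{4}}C_{1},C_{2}\rangle=0$. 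In the degenerate subcases I expect to supplement this with Cartan's formula \eqref{eq:Cartan} applied to the real principal curvatures $a_{0}$ and $a_{1}$ (each of geometric multiplicity equal to its algebraic multiplicity), where the complex pair $\alpha\pm\beta\sqrt{-1}$ contributes the real term $2a_{0}(\alpha(\alpha-a_{0})+\beta^{2})/((\alpha-a_{0})^{2}+\beta^{2})$ and, since $\delta=0$, the equations collapse substantially.

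Finally, exactly as in Proposition \ref{prop:3}, I would evaluate $\langle R(C_{1},C_{2})C_{1},C_{2}\rangle$ in two ways: the Gauss equation \eqref{eq:Gauss} gives $\kappa+\nu(\alpha^{2}+\beta^{2})$, while the expansion \eqref{eq:R}, after inserting the vanishing relations above, gives $0$. Hence $\kappa+\nu(\alpha^{2}+\beta^{2})=0$; since $\kappa=0$ while $\nu(\alpha^{2}+\beta^{2})\neq0$ (because $\beta>0$), this is the required contradiction. The main obstacle is the Codazzi step: unlike in type I, where $C_{3},C_{4}$ lie in a Jordan chain with $C_{1},C_{2}$, here they are eigenvectors for possibly coinciding eigenvalues, so one must treat the cases $a_{0}=a_{1}$, $a_{0}=0$, $a_{1}=0$ (and their combinations) separately, checking that the linear system still closes; and if $\langle R(C_{1},C_{2})C_{1},C_{2}\rangle$ alone does not suffice in some case, one would bring in a second curvature component such as $\langle R(C_{3},C_{4})C_{3},C_{4}\rangle$ or $\langle R(C_{1},C_{3})C_{1},C_{3}\rangle$ to eliminate the remaining parameters.
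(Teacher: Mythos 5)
There is a genuine gap, and it sits exactly at the step you flag as ``the main obstacle.'' The Codazzi equations do \emph{not} force $\nabla_{C_{i}}C_{j}\in\mathrm{span}\{C_{1},C_{2}\}$ for $i,j\in\{1,2\}$, nor do they force $\langle\nabla_{C_{3}}C_{1},C_{2}\rangle=\langle\nabla_{C_{4}}C_{1},C_{2}\rangle=0$. Writing $Z$ for $C_{3}$ or $C_{4}$ with eigenvalue $a_{Z}$, the equations $\{C_{1},Z\}C_{1}$ and $\{C_{1},Z\}C_{2}$ form a $2\times 2$ system in $\langle\nabla_{C_{1}}C_{1},Z\rangle$ and $\langle\nabla_{C_{1}}C_{2},Z\rangle$ with determinant $(\alpha-a_{Z})^{2}+\beta^{2}>0$, whose solution is proportional to the quantity $\langle\nabla_{Z}C_{1},C_{2}\rangle$ (this is the content of Lemma \ref{lemm:3}); that quantity is a free parameter which no Codazzi equation kills. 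Worse, the paper's Lemma \ref{lemm:203} proves that $\langle\nabla_{C_{1}}C_{2},Y\rangle\,\langle\nabla_{Y}C_{1},C_{2}\rangle\neq 0$ --- precisely the \emph{negation} of the vanishing you need in the direction of the spacelike eigenvector. Consequently $\langle R(C_{1},C_{2})C_{1},C_{2}\rangle$ does not reduce to $0$: it equals $\alpha^{2}+\beta^{2}>0$ by the Gauss equation, and the nonvanishing $Y$-direction connection coefficients supply exactly this positive contribution in the expansion \eqref{eq:R}, so no contradiction can be extracted from that single curvature component. (Indeed, the argument ``assume those coefficients vanish and derive $\alpha^{2}+\beta^{2}\leqq 0$'' is how the paper proves they \emph{cannot} vanish.)

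The actual proof has to work considerably harder and along different lines. It first restricts the real spectrum via Proposition \ref{prop:4} (at most one nonzero real principal curvature) and pins down that value by Cartan's identity as $r=(\alpha^{2}+\beta^{2})/\alpha$ (Lemma \ref{lemm:6}). The decisive curvature computation is not $\langle R(C_{1},C_{2})C_{1},C_{2}\rangle$ but the combination of $\langle R(C_{1},Y)C_{1},Y\rangle$ and $\langle R(C_{2},Y)C_{2},Y\rangle$ (Proposition \ref{prop:103}), supplemented by $\langle R(Y,X)Y,X\rangle$ (Lemma \ref{lemm:202}) and $\langle R(Y,X)Y,C_{1}\rangle$, followed by a four-way case analysis on $(a_{0},a_{1})$ in which Lemma \ref{lemm:203} supplies the final contradiction in each branch. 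Your closing sentence, suggesting one might ``bring in a second curvature component,'' gestures at the right idea, but the entire substance of the proof lies in carrying that out; as written, the central vanishing claim on which your contradiction rests is false.
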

Suppose that there exists an isoparametric hypersurface $M$ whose Petrov type is type III of index 2 in $\mathbb{R}^{5}_{2}$.
By Proposition \ref{prop:1}, there exists a local frame field $C_{1}, C_{2}, X, Y$ such that
\begin{equation*}
    AC_{1}=\alpha_{1} C_{1}+\beta_{1} C_{2},\quad
    AC_{2}=-\beta_{1} C_{1}+\alpha_{1} C_{2},\quad
    AX=a_{0}X,\quad
    AY=a_{1}Y,\\
\end{equation*}
\begin{equation*}
    \langle C_{i}, C_{j}\rangle
    =
    \begin{cases}
        (-1)^{i}, & (i=j),\\
        0, & (\text{otherwise}),
    \end{cases}
    \quad
    \langle X, X\rangle=-1,\quad
    \langle Y, Y\rangle=1,\quad
    \langle C_{i}, X\rangle
    =
    \langle C_{i}, Y\rangle
    =
    \langle X, Y\rangle
    =
    0.
\end{equation*}
Since the ambient space of $M$ is $\mathbb{R}^{5}_{2}$, $\kappa=0$ and $\nu=1$.
\begin{proposition}
    \label{prop:4}
    $M$ has at most one non-zero real principal curvature, that is, one of the following cases holds:
    \begin{enumerate}[label=(\arabic*)]
        \item $a_{0}=a_{1}=0.$\label{prop:4-1}
        \item There exists $r\neq 0$ such that $a_{0}=a_{1}=r$.\label{prop:4-2}
        \item $a_{0}=0$ and there exists $r\neq 0$ such that $a_{1}=r$.\label{prop:4-3}
        \item $a_{1}=0$ and there exists $r\neq 0$ such that $a_{0}=r$.\label{prop:4-4}
    \end{enumerate}
\end{proposition}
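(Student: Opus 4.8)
The plan is to argue by contradiction, in the spirit of the proofs of Theorems \ref{theo:ne-1} and \ref{theo:ne-2}. Suppose that none of \ref{prop:4-1}--\ref{prop:4-4} holds; unravelling the four cases, this says exactly that $a_{0}$ and $a_{1}$ are distinct and both nonzero, so that $M$ has the four distinct principal curvatures $\alpha_{1}\pm\beta_{1}\sqrt{-1}$, $a_{0}$, $a_{1}$, each of algebraic multiplicity $1$, with $a_{0}$ and $a_{1}$ of geometric multiplicity $1$ as well. Since the ambient space is $\mathbb{R}^{5}_{2}$ we have $\kappa=0$, $\nu=1$, $\delta=0$, so Cartan's formula \eqref{eq:Cartan} applies to $a_{0}$ and to $a_{1}$; as $a_{0},a_{1}\neq 0$, dividing the two identities by $a_{0}$ and $a_{1}$ and combining each conjugate pair of complex terms gives
\begin{equation*}
    \frac{2\{(\alpha_{1})^{2}+(\beta_{1})^{2}-a_{0}\alpha_{1}\}}{(\alpha_{1}-a_{0})^{2}+(\beta_{1})^{2}}
    +
    \frac{a_{1}}{a_{1}-a_{0}}
    =
    0
    ,\qquad
    \frac{2\{(\alpha_{1})^{2}+(\beta_{1})^{2}-a_{1}\alpha_{1}\}}{(\alpha_{1}-a_{1})^{2}+(\beta_{1})^{2}}
    +
    \frac{a_{0}}{a_{0}-a_{1}}
    =
    0
    .
\end{equation*}
These are sharp restrictions but, as one sees for instance by adding them, not contradictory in themselves; so the rest of the argument must extract more from the structure equations of the frame $C_{1},C_{2},X,Y$.

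I would first collect connection data. Lemma \ref{lemm:2} (valid here by Remark \ref{rem:1}) gives $\langle\nabla_{C_{1}}C_{1},C_{2}\rangle=\langle\nabla_{C_{2}}C_{1},C_{2}\rangle=0$. From $\{X,Y\}X$ and $\{X,Y\}Y$, using $a_{0}\neq a_{1}$, one gets $\langle\nabla_{X}Y,X\rangle=\langle\nabla_{Y}X,Y\rangle=0$; and from $\{X,C_{k}\}X$ and $\{Y,C_{k}\}Y$ ($k=1,2$) — each a $2\times 2$ system with nonzero determinant $(\alpha_{1}-a_{0})^{2}+(\beta_{1})^{2}$, resp.\ $(\alpha_{1}-a_{1})^{2}+(\beta_{1})^{2}$ — one gets $\langle\nabla_{X}C_{k},X\rangle=\langle\nabla_{Y}C_{k},Y\rangle=0$. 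With metric compatibility these force $\nabla_{X}X=\nabla_{Y}Y=0$ and $\nabla_{X}Y,\nabla_{Y}X\in\mathrm{span}(C_{1},C_{2})$. Finally, the Codazzi equation \eqref{eq:2} for the pair $X,Y$ is $(a_{1}I-A)\nabla_{X}Y=(a_{0}I-A)\nabla_{Y}X$; since $A$ preserves $\mathrm{span}(C_{1},C_{2})$ and there has only the complex eigenvalues $\alpha_{1}\pm\beta_{1}\sqrt{-1}$, both $a_{0}I-A$ and $a_{1}I-A$ are invertible on it, whence $\nabla_{X}Y=T\nabla_{Y}X$ for an explicit invertible $T$ on $\mathrm{span}(C_{1},C_{2})$ commuting with $A$, with $\det T=\{(\alpha_{1}-a_{0})^{2}+(\beta_{1})^{2}\}/\{(\alpha_{1}-a_{1})^{2}+(\beta_{1})^{2}\}$.

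Next I would compute the sectional curvature of $\mathrm{span}(X,Y)$ twice. The Gauss equation \eqref{eq:Gauss} gives $\langle R(X,Y)Y,X\rangle=-\kappa-\nu a_{0}a_{1}=-a_{0}a_{1}$, which is nonzero here. Substituting $\nabla_{X}X=\nabla_{Y}Y=0$ and $\nabla_{X}Y=T\nabla_{Y}X$ into \eqref{eq:R}, and using $\{C_{k},X\}Y$ together with metric compatibility to write the coefficients $\langle\nabla_{C_{k}}X,Y\rangle$ in terms of the two components of $\nabla_{Y}X$, one expresses $\langle R(X,Y)Y,X\rangle$ as a quadratic form in those two components with coefficients built from $\alpha_{1},\beta_{1},a_{0},a_{1}$. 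Equating the two values, and then bringing in the remaining Codazzi equations — those of the form $\{C_{1},C_{2}\}X$, $\{C_{1},C_{2}\}Y$, $\{X,C_{k}\}C_{l}$, $\{X,Y\}C_{k}$, which constrain the components of $\nabla_{Y}X$ together with $\langle\nabla_{C_{i}}C_{j},X\rangle$ and $\langle\nabla_{C_{i}}C_{j},Y\rangle$ — and the two Cartan relations, I expect the resulting over-determined system to be inconsistent, forcing $a_{0}a_{1}=0$ or $a_{0}=a_{1}$: a contradiction. (This reduction is what is needed downstream: once $a_{0},a_{1}$ are confined to cases \ref{prop:4-1}--\ref{prop:4-4}, the parallel analysis of $\langle R(C_{1},C_{2})C_{1},C_{2}\rangle$, which equals $\kappa+\nu((\alpha_{1})^{2}+(\beta_{1})^{2})=(\alpha_{1})^{2}+(\beta_{1})^{2}>0$ by Gauss, becomes tractable and should yield Theorem \ref{theo:ne-3}.)

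The hard part is precisely this last combination. Cartan's formula alone permits a two-parameter family of admissible quadruples $(\alpha_{1},\beta_{1},a_{0},a_{1})$, so the contradiction has to be squeezed out of the full Codazzi (and Gauss) system; the delicate step is the bookkeeping — tracking all the coefficients $\langle\nabla_{E_{i}}E_{j},E_{k}\rangle$ of the frame and selecting exactly which Codazzi identities close the system — after which the contradiction should reduce to a finite algebraic check.
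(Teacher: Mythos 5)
There is a genuine gap, and it sits exactly where you decided the short route fails. You assert that the two Cartan identities for $a_{0}$ and $a_{1}$ are ``not contradictory in themselves, as one sees for instance by adding them,'' and on that basis you launch a long Codazzi--Gauss bookkeeping program whose final inconsistency you only ``expect.'' But adding the two identities destroys information: taken together they are already contradictory, and that is the entire proof. The paper's proof is literally ``Similar to \cite[Corollary 2.9]{MR783023},'' i.e.\ a pure Cartan-formula argument involving no connection coefficients at all.

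Concretely, suppose $a_{0}\neq a_{1}$ are both nonzero, and set $z:=\alpha+\beta\sqrt{-1}$ (nonzero since $\beta>0$), $u:=1/a_{0}$, $v:=1/a_{1}$, $w:=1/z$, and $m:=\mathrm{Re}(w)=\alpha/(\alpha^{2}+\beta^{2})$. Using $\frac{k_{i}k_{j}}{k_{j}-k_{i}}=\frac{1}{1/k_{i}-1/k_{j}}$ for nonzero $k_{i},k_{j}$ and pairing the conjugate terms, Cartan's formula \eqref{eq:Cartan} with $\delta=0$ applied to $a_{0}$ and to $a_{1}$ becomes
\begin{equation*}
\frac{1}{u-v}+\frac{2(u-m)}{|u-w|^{2}}=0,
\qquad
\frac{1}{v-u}+\frac{2(v-m)}{|v-w|^{2}}=0 .
\end{equation*}
If $u<v$, the first equation forces $u>m$ and the second forces $v<m$, giving $m<u<v<m$; if $u>v$, the signs reverse and give $m<v<u<m$. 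Either way this is absurd, so $a_{0}=a_{1}$ or one of them vanishes, which is exactly the statement. (Your two displayed identities are correct; only the conclusion you drew from their sum is not.) Consequently the remainder of your proposal --- the frame computations, $\nabla_{X}X=\nabla_{Y}Y=0$, the operator $T$, the double computation of $\langle R(X,Y)Y,X\rangle$ --- is not needed for this proposition, and as written it does not amount to a proof, since the decisive contradiction is never actually exhibited.
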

\begin{proof}
    Similar to \cite[Corollary 2.9]{MR783023}.
\end{proof}
\begin{lemma}
    \label{lemm:6}
    In \ref{prop:4-2}, \ref{prop:4-3}, or \ref{prop:4-4} of Proposition \ref{prop:4}, we have $\alpha\neq 0$ and
    \begin{equation}
        r=\frac{\alpha^{2}+\beta^{2}}{\alpha}.
        \label{eq:18}
    \end{equation}
\end{lemma}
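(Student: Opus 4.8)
The plan is to apply Cartan's formula \eqref{eq:Cartan} to the nonzero real principal curvature $r$ produced by Proposition \ref{prop:4}. Recall that here $\delta=\kappa\cdot\nu=0$, since the ambient space is $\mathbb{R}^{5}_{2}$, and that the distinct principal curvatures of $M$ are the complex conjugate pair $\alpha\pm\beta\sqrt{-1}$, each of algebraic multiplicity $1$, together with the real eigenvalues carried by $X$ and $Y$. First I would check that \eqref{eq:Cartan} may indeed be invoked for $k_{i}=r$: in case \ref{prop:4-2} the real eigenvalue $r$ has algebraic multiplicity $2$ and the explicit eigenvectors $X,Y$, so its geometric multiplicity is also $2$; in cases \ref{prop:4-3} and \ref{prop:4-4} both real eigenvalues ($0$ and $r$) have algebraic multiplicity $1$, hence geometric multiplicity $1$. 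In all three cases the geometric and algebraic multiplicities of $r$ coincide, which is precisely the hypothesis of \eqref{eq:Cartan}; note also that $\lambda-r\neq 0$ and $\overline{\lambda}-r\neq 0$ automatically, as $\lambda:=\alpha+\beta\sqrt{-1}$ is non-real while $r$ is real.

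Next I would write out \eqref{eq:Cartan} for $k_{i}=r$. Because $\delta=0$, the summand attached to a real principal curvature equal to $0$ (present in cases \ref{prop:4-3} and \ref{prop:4-4}) is $m_{j}(\,0+r\cdot 0\,)/(0-r)=0$, and the multiplicity of $r$ itself never enters, since $j$ runs only over indices $\neq i$. Hence in every case \eqref{eq:Cartan} collapses to the contribution of the conjugate pair:
\begin{equation*}
    \frac{r(\alpha+\beta\sqrt{-1})}{(\alpha+\beta\sqrt{-1})-r}
    +
    \frac{r(\alpha-\beta\sqrt{-1})}{(\alpha-\beta\sqrt{-1})-r}
    =
    0.
\end{equation*}
Clearing denominators turns the left-hand side into $r\lambda(\overline{\lambda}-r)+r\overline{\lambda}(\lambda-r)=2r\{(\alpha^{2}+\beta^{2})-r\alpha\}$, so $\alpha^{2}+\beta^{2}=r\alpha$ because $r\neq 0$. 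Since $\beta>0$ we have $\alpha^{2}+\beta^{2}>0$, which forces $\alpha\neq 0$; dividing through by $\alpha$ gives \eqref{eq:18}.

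This argument is essentially computational. The only point that needs care is the verification that the geometric and algebraic multiplicities of $r$ agree so that Cartan's formula applies, but this is immediate because $r$ is a nonzero eigenvalue of multiplicity at most $2$ equipped with the explicit eigenvectors $X$ and $Y$. I do not expect a genuine obstacle here.
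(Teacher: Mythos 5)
Your proposal is correct and follows essentially the same route as the paper: both apply Cartan's formula \eqref{eq:Cartan} to $k_{i}=r$ with $\delta=0$, observe that only the conjugate pair $\alpha\pm\beta\sqrt{-1}$ contributes, and reduce to $(\alpha-r)\alpha+\beta^{2}=0$, from which $\alpha\neq 0$ and \eqref{eq:18} follow since $\beta>0$. The paper's version is merely terser, stating the combined-fraction form $2r\{(\alpha-r)\alpha+\beta^{2}\}/\{(\alpha-r)^{2}+\beta^{2}\}=0$ directly, while you additionally verify the multiplicity hypothesis of \eqref{eq:Cartan}.
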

\begin{proof}
    Using Cartan's formula \eqref{eq:Cartan} for $r$, by Proposition \ref{prop:4}, we have
    \begin{equation*}
        2r
        \left(
            \frac{(\alpha-r)\alpha+\beta^{2}}{(\alpha-r)^{2}+\beta^{2}}
        \right)
        =
        0.
    \end{equation*}
    Hence, we have $(r-\alpha)\alpha-\beta^{2}=0$.
    If $\alpha=0$, then $\beta=0$, which is a contradiction.
    Thus, we have $\alpha\neq 0$ and \eqref{eq:18}.
\end{proof}
\begin{lemma}\label{lemm:101}
    For $Z=X, Y$, we have $\langle \nabla_{Z}Z, C_{1}\rangle=\langle \nabla_{Z}Z, C_{2}\rangle=0$.
\end{lemma}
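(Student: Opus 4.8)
The plan is to pin down $\langle\nabla_{Z}Z, C_{1}\rangle$ and $\langle\nabla_{Z}Z, C_{2}\rangle$ from two Codazzi equations, in the same spirit as the proofs of Lemmas \ref{lemm:2} and \ref{lemm:4}. I first treat $Z=X$. Since $AX=a_{0}X$ with $a_{0}$ constant, the Codazzi equation $\{X, C_{1}\}X$ has vanishing right-hand side, because $\langle\nabla_{C_{1}}(AX), X\rangle-\langle\nabla_{C_{1}}X, AX\rangle=a_{0}\langle\nabla_{C_{1}}X, X\rangle-a_{0}\langle\nabla_{C_{1}}X, X\rangle=0$; expanding $AC_{1}=\alpha_{1}C_{1}+\beta_{1}C_{2}$ on the left-hand side gives
\[
    (\alpha_{1}-a_{0})\langle\nabla_{X}C_{1}, X\rangle+\beta_{1}\langle\nabla_{X}C_{2}, X\rangle=0.
\]
Likewise $\{X, C_{2}\}X$, using $AC_{2}=-\beta_{1}C_{1}+\alpha_{1}C_{2}$, yields
\[
    -\beta_{1}\langle\nabla_{X}C_{1}, X\rangle+(\alpha_{1}-a_{0})\langle\nabla_{X}C_{2}, X\rangle=0.
\]

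Next I would rewrite these relations in terms of $\nabla_{X}X$. Because $\nabla$ preserves $\langle\cdot,\cdot\rangle$ and $\langle C_{i}, X\rangle=0$, we have $\langle\nabla_{X}C_{i}, X\rangle=-\langle\nabla_{X}X, C_{i}\rangle$ for $i=1,2$. Substituting, the two displayed equations become a homogeneous linear system in the unknowns $\langle\nabla_{X}X, C_{1}\rangle$ and $\langle\nabla_{X}X, C_{2}\rangle$ with coefficient matrix
\[
    \begin{pmatrix}
        \alpha_{1}-a_{0} & \beta_{1}\\
        \beta_{1} & -(\alpha_{1}-a_{0})
    \end{pmatrix},
\]
of determinant $-\{(\alpha_{1}-a_{0})^{2}+\beta_{1}^{2}\}$. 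This determinant is nonzero because $\beta_{1}=\beta>0$, so $\langle\nabla_{X}X, C_{1}\rangle=\langle\nabla_{X}X, C_{2}\rangle=0$.

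Finally, the case $Z=Y$ is handled by the identical argument with $Y$ in place of $X$ and $a_{1}$ in place of $a_{0}$: the right-hand sides of $\{Y, C_{1}\}Y$ and $\{Y, C_{2}\}Y$ again vanish since $AY=a_{1}Y$, and the corresponding coefficient matrix has determinant $-\{(\alpha_{1}-a_{1})^{2}+\beta_{1}^{2}\}\neq 0$, forcing $\langle\nabla_{Y}Y, C_{1}\rangle=\langle\nabla_{Y}Y, C_{2}\rangle=0$. I do not anticipate any real obstacle here: the proof is a short linear-algebra manipulation of the Codazzi equations of the type already used repeatedly in Section \ref{sec:4-3}, and the only point worth isolating is that the $2\times2$ determinant above is manifestly strictly negative thanks to $\beta>0$.
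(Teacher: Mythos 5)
Your proof is correct and essentially identical to the paper's: the paper also derives the two Codazzi relations $\{C_{1},Z\}Z$ and $\{C_{2},Z\}Z$ (the same equations as your $\{Z,C_{1}\}Z$ and $\{Z,C_{2}\}Z$, since the Codazzi identity is symmetric in its first two slots) and concludes from the $2\times 2$ system whose determinant is $\pm\{(\alpha-a_{Z})^{2}+\beta^{2}\}\neq 0$. No substantive difference.
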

\begin{proof}
    We have
    \begin{align}
        &\{C_{1}, Z\}Z;
        &
        0&=-(\alpha-a_{Z})\langle \nabla_{Z}Z, C_{1}\rangle-\beta\langle \nabla_{Z}Z, C_{2}\rangle,\label{eq:103}\\
        &\{C_{2}, Z\}Z;
        &
        0&=-(\alpha-a_{Z})\langle \nabla_{Z}Z, C_{2}\rangle+\beta\langle \nabla_{Z}Z, C_{1}\rangle,\label{eq:104}
    \end{align}
    where $a_{Z}$ is $a_{0}$ or $a_{1}$ if $Z=X$ or $Z=Y$, respectively.
    We should solve the system of linear equations \eqref{eq:103} and \eqref{eq:104}.
\end{proof}
\begin{lemma}
    \label{lemm:3}
    For $Z=X, Y$, we have
    \begin{align*}
        \langle \nabla_{C_{1}}C_{1}, Z\rangle
        &=
        \langle \nabla_{C_{2}}C_{2}, Z\rangle,\quad
        \langle \nabla_{C_{1}}C_{2}, Z\rangle
        =
        -
        \langle \nabla_{C_{2}}C_{1}, Z\rangle
        =
        \frac{-2\beta^{2}}{(\alpha-a_{Z})^{2}+\beta^{2}}\langle \nabla_{Z}C_{1}, C_{2}\rangle,
    \end{align*}
    where $a_{Z}$ is $a_{0}$ or $a_{1}$ if $Z=X$ or $Z=Y$, respectively.
\end{lemma}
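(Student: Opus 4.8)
The plan is to argue exactly as in the proofs of Lemmas \ref{lemm:101} and \ref{lemm:6}: fix $Z\in\{X,Y\}$, write down the Codazzi equations $\{C_{1},Z\}C_{1}$, $\{C_{1},Z\}C_{2}$, $\{C_{2},Z\}C_{1}$, $\{C_{2},Z\}C_{2}$, and solve the resulting linear system for the four scalars $\langle\nabla_{C_{1}}C_{1},Z\rangle$, $\langle\nabla_{C_{1}}C_{2},Z\rangle$, $\langle\nabla_{C_{2}}C_{1},Z\rangle$, $\langle\nabla_{C_{2}}C_{2},Z\rangle$. Since the principal curvatures $\alpha,\beta,a_{0},a_{1}$ are constant, in each $\{P,Z\}Q$ one may pull $A$ through $\nabla$; and since $\nabla$ preserves $\langle\cdot,\cdot\rangle$ and the frame $C_{1},C_{2},X,Y$ is orthogonal with $\langle C_{i},C_{i}\rangle=(-1)^{i}$, $\langle X,X\rangle=-\langle Y,Y\rangle=-1$, every inner product that appears collapses to one of these four scalars, to the single quantity $\langle\nabla_{Z}C_{1},C_{2}\rangle$, or to $0$. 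Before expanding, I would record the three auxiliary identities $\langle\nabla_{C_{i}}Z,C_{j}\rangle=-\langle\nabla_{C_{i}}C_{j},Z\rangle$, $\langle\nabla_{Z}C_{i},C_{i}\rangle=0$, and $\langle\nabla_{Z}C_{2},C_{1}\rangle=-\langle\nabla_{Z}C_{1},C_{2}\rangle$, all immediate from metric compatibility and the orthogonality of the frame.

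Carrying this out and writing $m:=\alpha-a_{Z}$, $w:=\langle\nabla_{Z}C_{1},C_{2}\rangle$, the four Codazzi equations reduce to
\begin{align*}
    \{C_{1},Z\}C_{1}:\quad & m\langle\nabla_{C_{1}}C_{1},Z\rangle+\beta\langle\nabla_{C_{1}}C_{2},Z\rangle=-2\beta w,\\
    \{C_{2},Z\}C_{2}:\quad & m\langle\nabla_{C_{2}}C_{2},Z\rangle-\beta\langle\nabla_{C_{2}}C_{1},Z\rangle=-2\beta w,\\
    \{C_{1},Z\}C_{2}:\quad & m\langle\nabla_{C_{1}}C_{2},Z\rangle-\beta\langle\nabla_{C_{1}}C_{1},Z\rangle=0,\\
    \{C_{2},Z\}C_{1}:\quad & m\langle\nabla_{C_{2}}C_{1},Z\rangle+\beta\langle\nabla_{C_{2}}C_{2},Z\rangle=0.
\end{align*}
Because $\beta>0$ in type III we have $m^{2}+\beta^{2}>0$. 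Eliminating $\langle\nabla_{C_{1}}C_{1},Z\rangle$ between the first and third equations gives $\langle\nabla_{C_{1}}C_{2},Z\rangle=-2\beta^{2}w/(m^{2}+\beta^{2})$ and then $\langle\nabla_{C_{1}}C_{1},Z\rangle=-2\beta m w/(m^{2}+\beta^{2})$; likewise the second and fourth give $\langle\nabla_{C_{2}}C_{1},Z\rangle=2\beta^{2}w/(m^{2}+\beta^{2})$ and $\langle\nabla_{C_{2}}C_{2},Z\rangle=-2\beta m w/(m^{2}+\beta^{2})$. Hence $\langle\nabla_{C_{1}}C_{1},Z\rangle=\langle\nabla_{C_{2}}C_{2},Z\rangle$ and $\langle\nabla_{C_{1}}C_{2},Z\rangle=-\langle\nabla_{C_{2}}C_{1},Z\rangle=\dfrac{-2\beta^{2}}{m^{2}+\beta^{2}}w$, which is the assertion since $m^{2}=(\alpha-a_{Z})^{2}$.

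The computation is entirely routine, and I do not anticipate a genuine obstacle: the only things to watch are the signs introduced by the indefinite metric (handled by the three auxiliary identities above) and the non-vanishing of the denominator $(\alpha-a_{Z})^{2}+\beta^{2}$, which is immediate from $\beta>0$. As a consistency check one can verify that the remaining Codazzi equation $\{C_{1},C_{2}\}Z$ is automatically satisfied by this solution, so no further relations arise. This lemma is a bookkeeping step; together with Lemma \ref{lemm:101} it will feed the Gauss-equation argument (analogous to Proposition \ref{prop:3}) that ultimately produces the contradiction in type III.
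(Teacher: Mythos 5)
Your proposal is correct and follows essentially the same route as the paper: the paper's proof consists of exactly the four Codazzi equations $\{C_{1},Z\}C_{1}$, $\{C_{1},Z\}C_{2}$, $\{C_{2},Z\}C_{1}$, $\{C_{2},Z\}C_{2}$ (its equations \eqref{eq:3}--\eqref{eq:6}, which agree with yours term for term) followed by solving the resulting linear system. Your explicit elimination and the resulting values of the four scalars are correct, and your observation that $(\alpha-a_{Z})^{2}+\beta^{2}>0$ because $\beta>0$ supplies the nondegeneracy that the paper leaves implicit (its remark ``$\beta=0$'' is evidently a typo for $\beta\neq 0$).
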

\begin{proof}
    We have
    \begin{align}
        &\{C_{1}, Z\}C_{1};
        &
        (\alpha-a_{Z})\langle \nabla_{C_{1}}C_{1}, Z\rangle+\beta\langle \nabla_{C_{1}}C_{2}, Z\rangle &=-2\beta\langle \nabla_{Z}C_{1}, C_{2}\rangle,\label{eq:3}\\
        &\{C_{1}, Z\}C_{2};
        &
        (\alpha-a_{Z})\langle \nabla_{C_{1}}C_{2}, Z\rangle-\beta\langle \nabla_{C_{1}}C_{1}, Z\rangle &=0,\label{eq:4}\\
        &\{C_{2}, Z\}C_{1};
        &
        (\alpha-a_{Z})\langle \nabla_{C_{2}}C_{1}, Z\rangle+\beta\langle \nabla_{C_{2}}C_{2}, Z\rangle &=0,\label{eq:5}\\
        &\{C_{2}, Z\}C_{2};
        &
        (\alpha-a_{Z})\langle \nabla_{C_{2}}C_{2}, Z\rangle-\beta\langle \nabla_{C_{2}}C_{1}, Z\rangle &=-2\beta\langle \nabla_{Z}C_{1}, C_{2}\rangle.\label{eq:6}
    \end{align}
    We note that $\beta=0$.
    We should solve the system of linear equation \eqref{eq:3}, \eqref{eq:4}, \eqref{eq:5}, and \eqref{eq:6}.
\end{proof}
Let us introduce the following notation:
\begin{equation*}
    P_{i}:=\langle\nabla_{X}Y, C_{i}\rangle
    ,\quad
    Q_{i}:=\langle\nabla_{Y}X, C_{i}\rangle
    ,\quad
    R_{i}:=\langle\nabla_{C_{i}}X, Y\rangle
    ,\quad
    i\in \{1, 2\}.
\end{equation*}
\begin{proposition}
    \label{prop:103}
    We have
    \begin{equation}
        \begin{split}
            2\alpha a_{1}
            =
            2(\kappa+\nu\alpha a_{1})
            &=
            -2\langle\nabla_{C_{1}}C_{1}, Y\rangle^{2}-2\langle\nabla_{C_{1}}C_{2}, Y\rangle^{2}\\
            &
            -P_{1}Q_{1}
            +Q_{1}R_{1}
            +R_{1}P_{1}
            +P_{2}Q_{2}
            -Q_{2}R_{2}
            -R_{2} P_{2}
            .
        \end{split}
        \label{eq:120}
    \end{equation}
\end{proposition}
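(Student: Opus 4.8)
The plan is to compute one component of the curvature tensor $R$ in two ways, just as in the proofs of Propositions \ref{prop:3} and \ref{prop:7}: once by the Gauss equation \eqref{eq:Gauss}, which produces the left-hand side of \eqref{eq:120}, and once by expanding \eqref{eq:R} in terms of the frame $C_1, C_2, X, Y$, which produces the right-hand side. Since the ambient space of $M$ is $\mathbb{R}^5_2$ we have $\kappa = 0$ and $\nu = 1$, so the first equality $2\alpha a_1 = 2(\kappa + \nu\alpha a_1)$ is automatic, and it remains to identify $2(\kappa + \nu\alpha a_1)$ with the right-hand side of \eqref{eq:120}.

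First I would apply \eqref{eq:Gauss} to the quadruples $(C_1, Y, C_1, Y)$ and $(C_2, Y, C_2, Y)$. Using $\langle C_i, C_i\rangle = (-1)^i$, $\langle Y, Y\rangle = 1$, $\langle C_i, Y\rangle = 0$ and $AC_1 = \alpha C_1 + \beta C_2$, $AC_2 = -\beta C_1 + \alpha C_2$, $AY = a_1 Y$, one gets $\langle R(C_1, Y)C_1, Y\rangle = \kappa + \nu\alpha a_1$ and $\langle R(C_2, Y)C_2, Y\rangle = -(\kappa + \nu\alpha a_1)$, hence $2(\kappa + \nu\alpha a_1) = \langle R(C_1, Y)C_1, Y\rangle - \langle R(C_2, Y)C_2, Y\rangle$. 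It is this difference that I would then expand via \eqref{eq:R}.

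For the second evaluation, one expands $\nabla_Y C_i$, $\nabla_{C_i}Y$ and $\nabla_{C_i}C_i$ in the frame $C_1, C_2, X, Y$. Because $\nabla$ preserves $\langle\cdot,\cdot\rangle$ and the frame has constant inner products, Lemma \ref{lemm:2} together with Remark \ref{rem:1} kills the $C_1$- and $C_2$-components of $\nabla_{C_i}C_i$ (and also gives $\langle\nabla_{C_2}C_2, C_1\rangle = 0$), Lemma \ref{lemm:101} shows $\nabla_Y Y$ is a multiple of $X$, and the coefficients that remain are $\langle\nabla_{C_1}C_1, Y\rangle$, $\langle\nabla_{C_1}C_2, Y\rangle$, $\langle\nabla_Y C_1, C_2\rangle$, together with $P_i, Q_i, R_i$ entering through $\langle\nabla_X C_i, Y\rangle = -P_i$, $\langle\nabla_Y C_i, X\rangle = -Q_i$ and $\langle\nabla_{C_i}X, Y\rangle = R_i$. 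Substituting these into the four terms of \eqref{eq:R} for both slots, the cross-terms $\langle\nabla_Y C_1, C_2\rangle\langle\nabla_{C_1}C_2, Y\rangle$ cancel within each slot; in the difference $\langle R(C_1, Y)C_1, Y\rangle - \langle R(C_2, Y)C_2, Y\rangle$ the frame-derivative term $Y\langle\nabla_{C_1}C_1, Y\rangle$ cancels, and so does the term proportional to $\bigl(\langle\nabla_{C_2}C_2, X\rangle - \langle\nabla_{C_1}C_1, X\rangle\bigr)\langle\nabla_Y Y, X\rangle$ because $\langle\nabla_{C_1}C_1, X\rangle = \langle\nabla_{C_2}C_2, X\rangle$ by Lemma \ref{lemm:3}. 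The quadratic terms from the two slots add up, again by Lemma \ref{lemm:3} (which gives $\langle\nabla_{C_1}C_1, Y\rangle = \langle\nabla_{C_2}C_2, Y\rangle$ and $\langle\nabla_{C_1}C_2, Y\rangle = -\langle\nabla_{C_2}C_1, Y\rangle$), to $-2\langle\nabla_{C_1}C_1, Y\rangle^2 - 2\langle\nabla_{C_1}C_2, Y\rangle^2$, and the $P,Q,R$ terms assemble into $-P_1 Q_1 + Q_1 R_1 + R_1 P_1 + P_2 Q_2 - Q_2 R_2 - R_2 P_2$, yielding \eqref{eq:120}.

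The difficulty is entirely clerical: one must expand all four terms of \eqref{eq:R} for each of the two slots while tracking every sign, and the indefinite metric makes this delicate because the component-extraction formulas for $C_1$ (with $\langle C_1, C_1\rangle = -1$) differ in sign from those for $C_2$ (with $\langle C_2, C_2\rangle = 1$), so the two slots are not related by mere relabeling. The step most prone to error is checking that all the spurious contributions — the cross-terms, the frame-derivative term $Y\langle\nabla_{C_1}C_1, Y\rangle$, and the $\langle\nabla_Y Y, X\rangle$-terms — cancel exactly, the last cancellation being the only place where the $X$-part of Lemma \ref{lemm:3} is needed.
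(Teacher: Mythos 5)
Your proposal matches the paper's own proof: the paper likewise evaluates $\langle R(C_{1}, Y)C_{1}, Y\rangle$ and $\langle R(C_{2}, Y)C_{2}, Y\rangle$ once by the Gauss equation \eqref{eq:Gauss} and once by \eqref{eq:R} using Lemmas \ref{lemm:2}, \ref{lemm:101}, and \ref{lemm:3}, then subtracts so that the $\langle\nabla_{Y}\nabla_{C_{1}}C_{1}, Y\rangle$ term cancels and the quadratic and $P, Q, R$ terms assemble into \eqref{eq:120}. The argument and the supporting lemmas you invoke are exactly those of the paper.
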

\begin{proof}
    We note Remark \ref{rem:1}.
    By Lemmas \ref{lemm:2}, \ref{lemm:101}, and \ref{lemm:3}, and \eqref{eq:R}, we have
    \begin{equation}
        \begin{split}
            \langle R(C_{1}, Y)C_{1}, Y\rangle
            &=
            -\langle\nabla_{C_{1}}C_{1}, Y\rangle^{2}-\langle\nabla_{C_{1}}C_{2}, Y\rangle^{2}
            -
            \langle \nabla_{Y}\nabla_{C_{1}}C_{1}, Y\rangle\\
            &+
            Q_{1}R_{1}
            +
            R_{1}P_{1}
            -
            Q_{1}P_{1}
            .
        \end{split}
        \label{eq:118}
    \end{equation}
    Similarly, we have
    \begin{align}
        \begin{split}
            \langle R(C_{2}, Y)C_{2}, Y\rangle
            &=
            \langle\nabla_{C_{1}}C_{1}, Y\rangle^{2}+\langle\nabla_{C_{1}}C_{2}, Y\rangle^{2}
            -
            \langle \nabla_{Y}\nabla_{C_{1}}C_{1}, Y\rangle\\
            &+
            Q_{2}R_{2}
            +
            R_{2}P_{2}
            -
            Q_{2}P_{2}
            .
        \end{split}
        \label{eq:119}
    \end{align}
    We obtain \eqref{eq:120} by \eqref{eq:118} and \eqref{eq:119}, and using the Gauss equation.
\end{proof}
\begin{lemma}\label{lemm:102}
    If $a_{0}=a_{1}$, then we have $P_{1}=P_{2}=Q_{1}=Q_{2}=0$.
\end{lemma}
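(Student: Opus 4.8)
The plan is to squeeze $P_1,P_2,Q_1,Q_2$ out of Codazzi equations that, under the hypothesis $a_0=a_1$, degenerate into homogeneous linear systems with an invertible coefficient matrix. First I would record the consequences of $\nabla$ being metric together with the orthogonality relations of the frame: differentiating $\langle C_i,Y\rangle=0$ along $X$ gives $\langle\nabla_X C_i,Y\rangle=-P_i$; differentiating $\langle C_i,X\rangle=0$ along $Y$ gives $\langle\nabla_Y C_i,X\rangle=-Q_i$; and differentiating $\langle X,Y\rangle=0$ along $C_i$ gives $\langle\nabla_{C_i}Y,X\rangle=-R_i$.

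Next I would expand the four Codazzi equations $\{C_1,X\}Y$, $\{C_2,X\}Y$, $\{C_1,Y\}X$, $\{C_2,Y\}X$, using $AX=a_0X$, $AY=a_1Y$, $AC_1=\alpha C_1+\beta C_2$, $AC_2=-\beta C_1+\alpha C_2$ and the constancy of the principal curvatures. The crucial point is that in each of these four identities the entire left-hand side collapses to $(a_0-a_1)R_1$ or $(a_0-a_1)R_2$, which vanishes precisely because $a_0=a_1$; no information about $R_1,R_2$ is needed. Reading off the right-hand sides and substituting the three identities above, $\{C_1,X\}Y$ and $\{C_2,X\}Y$ become
\begin{align*}
    (\alpha-a_0)P_1+\beta P_2&=0, & \beta P_1-(\alpha-a_0)P_2&=0,
\end{align*}
while $\{C_1,Y\}X$ and $\{C_2,Y\}X$ become the same pair of equations with each $P_i$ replaced by $Q_i$. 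The coefficient matrix of either system is $\begin{pmatrix}\alpha-a_0 & \beta\\ \beta & -(\alpha-a_0)\end{pmatrix}$, whose determinant equals $-\bigl((\alpha-a_0)^2+\beta^2\bigr)\neq 0$ since $\beta>0$. Hence $P_1=P_2=0$ and $Q_1=Q_2=0$.

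I do not expect a genuine obstacle here; the work is purely organizational, in the same spirit as the proofs of Lemmas \ref{lemm:101} and \ref{lemm:3}. The two things to watch are the signs generated by the indefinite metric — which are absorbed once and for all by the identities $\langle\nabla_X C_i,Y\rangle=-P_i$, $\langle\nabla_Y C_i,X\rangle=-Q_i$, $\langle\nabla_{C_i}Y,X\rangle=-R_i$ — and the bookkeeping that makes the left-hand side of each Codazzi equation proportional to $a_0-a_1$, so that the hypothesis $a_0=a_1$ is exactly what turns them into homogeneous systems governed by the invertible block displayed above.
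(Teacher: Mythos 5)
Your proposal is correct and follows essentially the same route as the paper: the paper's proof also consists of the four Codazzi equations $\{C_1,X\}Y$, $\{C_2,X\}Y$, $\{C_1,Y\}X$, $\{C_2,Y\}X$, whose left-hand sides vanish under $a_0=a_1$, yielding the two homogeneous $2\times 2$ systems with determinant $\pm\bigl((\alpha-a_0)^2+\beta^2\bigr)\neq 0$. The sign bookkeeping via $\langle\nabla_X C_i,Y\rangle=-P_i$ and $\langle\nabla_Y C_i,X\rangle=-Q_i$ is exactly what the paper implicitly uses.
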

\begin{proof}
    By the assumption, we have
    \begin{align}
        &\{C_{1}, X\}Y;
        &
        0&=-(\alpha-a_{1})P_{1}-\beta P_{2},\label{eq:121}\\
        &\{C_{2}, X\}Y;
        &
        0&=-(\alpha-a_{1})P_{2}+\beta P_{1},\label{eq:122}\\
        &\{C_{1}, Y\}X;
        &
        0&=-(\alpha-a_{0})Q_{1}-\beta Q_{2},\label{eq:123}\\
        &\{C_{2}, Y\}X;
        &
        0&=-(\alpha-a_{0})Q_{2}+\beta Q_{1}.\label{eq:124}
    \end{align}
    We should solve the system of linear equations \eqref{eq:121} and \eqref{eq:122} and the system of linear equations \eqref{eq:123} and \eqref{eq:124}.
\end{proof}
\begin{lemma}\label{lemm:201}
    In \ref{prop:4-3} of Proposition \ref{prop:4}, we have
    \begin{align}
        Q_{1}&=\frac{\beta}{\alpha}P_{2}
        ,&
        Q_{2}&=-\frac{\beta}{\alpha}P_{1}
        ,\label{eq:201}\\
        R_{1}&=-\frac{\beta^{2}}{\alpha^{2}+\beta^{2}}P_{1}+\frac{\alpha\beta}{\alpha^{2}+\beta^{2}}P_{2}
        ,&
        R_{2}&=-\frac{\alpha\beta}{\alpha^{2}+\beta^{2}}P_{1}-\frac{\beta^{2}}{\alpha^{2}+\beta^{2}}P_{2}
        .\label{eq:202}
    \end{align}
    In \ref{prop:4-4} of Proposition \ref{prop:4}, we have
    \begin{align}
        Q_{1}&=-\frac{\alpha}{\beta}P_{2}
        ,&
        Q_{2}&=\frac{\alpha}{\beta}P_{1}
        ,\label{eq:203}\\
        R_{1}&=-\frac{\alpha^{2}}{\alpha^{2}+\beta^{2}}P_{1}-\frac{\alpha\beta}{\alpha^{2}+\beta^{2}}P_{2}
        ,&
        R_{2}&=\frac{\alpha\beta}{\alpha^{2}+\beta^{2}}P_{1}-\frac{\alpha^{2}}{\alpha^{2}+\beta^{2}}P_{2}
        .\label{eq:204}
    \end{align}
\end{lemma}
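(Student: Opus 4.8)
The plan is to obtain all eight identities in the same way Lemma~\ref{lemm:102} was obtained, namely by feeding suitable triples of the frame vectors $C_{1}, C_{2}, X, Y$ into the Codazzi equation \eqref{eq:Codazzi}; the new point compared with Lemma~\ref{lemm:102} is that in cases \ref{prop:4-3} and \ref{prop:4-4} one has $a_{0}\neq a_{1}$, so the resulting $2\times 2$ linear systems are non-singular and can actually be solved for $Q_{1}, Q_{2}$ and $R_{1}, R_{2}$ in terms of $P_{1}, P_{2}$. Throughout I would use the elementary observation that, since $\langle C_{i}, X\rangle=\langle C_{i}, Y\rangle=\langle X, Y\rangle=0$ identically on the frame domain and $\nabla$ preserves $\langle\cdot,\cdot\rangle$, differentiating these relations gives $\langle \nabla_{X}C_{i}, Y\rangle=-P_{i}$, $\langle \nabla_{Y}C_{i}, X\rangle=-Q_{i}$, and $\langle \nabla_{C_{i}}Y, X\rangle=-R_{i}$. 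Hence every instance of the Codazzi equation among these four vectors turns into a linear relation among $P_{1}, P_{2}, Q_{1}, Q_{2}, R_{1}, R_{2}$ whose coefficients are polynomials in $\alpha, \beta, a_{0}, a_{1}$.

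First I would treat case \ref{prop:4-3}: here $a_{0}=0$ and $a_{1}=r$ with $r=(\alpha^{2}+\beta^{2})/\alpha$ by Lemma~\ref{lemm:6}, so that $\alpha\neq 0$ and $r-\alpha=\beta^{2}/\alpha$. Expanding $\{X, Y\}C_{1}$ and $\{X, Y\}C_{2}$ with $AX=0$, $AY=rY$, $AC_{1}=\alpha C_{1}+\beta C_{2}$, $AC_{2}=-\beta C_{1}+\alpha C_{2}$ gives, after this substitution,
\begin{align*}
    -\alpha Q_{1}-\beta Q_{2}&=\tfrac{\beta^{2}}{\alpha}P_{1}-\beta P_{2},
    &
    \beta Q_{1}-\alpha Q_{2}&=\beta P_{1}+\tfrac{\beta^{2}}{\alpha}P_{2},
\end{align*}
a system whose coefficient determinant is $\alpha^{2}+\beta^{2}\neq 0$; solving it yields \eqref{eq:201}. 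Next, expanding $\{C_{1}, X\}Y$ and $\{C_{2}, X\}Y$ and using $\langle \nabla_{X}C_{i}, Y\rangle=-P_{i}$ produces $(a_{0}-a_{1})R_{i}=-rR_{i}$ as an explicit linear combination of $P_{1}$ and $P_{2}$; dividing by $-r$ and substituting $r=(\alpha^{2}+\beta^{2})/\alpha$ gives \eqref{eq:202}.

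Case \ref{prop:4-4} is completely symmetric, with the roles of $a_{0}$ and $a_{1}$ interchanged: now $a_{1}=0$ and $a_{0}=r=(\alpha^{2}+\beta^{2})/\alpha$, so $a_{0}-\alpha=\beta^{2}/\alpha$, $a_{1}-\alpha=-\alpha$, and $a_{0}-a_{1}=r$. The same four Codazzi equations $\{X, Y\}C_{1}$, $\{X, Y\}C_{2}$, $\{C_{1}, X\}Y$, $\{C_{2}, X\}Y$ then give a non-singular $2\times 2$ system for $(Q_{1}, Q_{2})$ together with a direct expression for each $R_{i}$, whose solutions are \eqref{eq:203} and \eqref{eq:204}. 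I expect no conceptual obstacle; the only thing that requires care is the bookkeeping — substituting $r=(\alpha^{2}+\beta^{2})/\alpha$ consistently so that every coefficient collapses to the stated rational function of $\alpha$ and $\beta$, and recording that the relevant determinants never vanish because $\alpha\neq 0$ (Lemma~\ref{lemm:6}) and $\beta>0$.
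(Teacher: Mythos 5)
Your proposal is correct and follows essentially the same route as the paper: both obtain \eqref{eq:201} and \eqref{eq:203} by solving the $2\times 2$ system coming from $\{X, Y\}C_{1}$ and $\{X, Y\}C_{2}$, and both then read off the $R_{i}$ from one further pair of Codazzi equations (the paper uses $\{C_{i}, Y\}X$, expressing $R_{i}$ via the already-computed $Q_{i}$, while you use $\{C_{i}, X\}Y$ to get $R_{i}$ directly from $P_{1}, P_{2}$ — an immaterial variation, since $a_{0}-a_{1}=\mp r\neq 0$ makes either pair solvable). The computations check out in both cases \ref{prop:4-3} and \ref{prop:4-4}.
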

\begin{proof}
    \eqref{eq:201} and \eqref{eq:203} follow from $\{X, Y\}C_{1}$ and $\{X, Y\}C_{2}$.
    Thus, \eqref{eq:202} and \eqref{eq:204} follow from $\{C_{1}, Y\}X$ and $\{C_{2}, Y\}X$.
\end{proof}
\begin{lemma}\label{lemm:204}
    We have $\langle\nabla_{X}X, Y\rangle=0$ and $\langle\nabla_{Y}Y, X\rangle=0$.
\end{lemma}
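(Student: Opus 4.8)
The plan is to reduce the lemma to two scalar vanishings and then settle them according to the case split of Proposition~\ref{prop:4}. First I would record the metric consequences: since $\langle X,X\rangle$, $\langle Y,Y\rangle$, $\langle X,Y\rangle$ are constant, we get $\langle\nabla_X X,X\rangle=\langle\nabla_Y Y,Y\rangle=0$ and $\langle\nabla_X Y,X\rangle=-\langle\nabla_X X,Y\rangle$, $\langle\nabla_Y X,Y\rangle=-\langle\nabla_Y Y,X\rangle$; combined with Lemma~\ref{lemm:101} this gives $\nabla_X X\in\mathbb{R}Y$ and $\nabla_Y Y\in\mathbb{R}X$, so the claim is equivalent to $\langle\nabla_X X,Y\rangle=\langle\nabla_Y Y,X\rangle=0$.

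In cases~\ref{prop:4-3} and~\ref{prop:4-4} of Proposition~\ref{prop:4} we have $a_0\neq a_1$, and a direct Codazzi computation suffices: evaluating $\{X,Y\}X$ and $\{X,Y\}Y$ and using that $AX=a_0X$, $AY=a_1Y$ are scalar, the equations collapse to $(a_1-a_0)\langle\nabla_X X,Y\rangle=0$ and $(a_0-a_1)\langle\nabla_Y Y,X\rangle=0$, so dividing by $a_0-a_1$ finishes these two cases.

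Cases~\ref{prop:4-1} and~\ref{prop:4-2} are the hard part, because $A$ is scalar on $\mathrm{span}\{X,Y\}$ and hence every Codazzi equation built from $X$, $Y$ and the frame degenerates to $0=0$; the idea is to show neither configuration actually occurs. By Lemma~\ref{lemm:102} we have $P_i=Q_i=0$, so \eqref{eq:120} reduces to $2\alpha a_1=-2\langle\nabla_{C_1}C_1,Y\rangle^2-2\langle\nabla_{C_1}C_2,Y\rangle^2\leqq 0$. In case~\ref{prop:4-2}, $a_1=r=(\alpha^2+\beta^2)/\alpha$ by Lemma~\ref{lemm:6}, so the left side equals $2(\alpha^2+\beta^2)>0$, a contradiction; hence case~\ref{prop:4-2} does not arise. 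In case~\ref{prop:4-1} ($a_0=a_1=0$), the same identity forces $\langle\nabla_{C_1}C_1,Y\rangle=\langle\nabla_{C_1}C_2,Y\rangle=0$, and then Lemma~\ref{lemm:3} gives $\langle\nabla_Y C_1,C_2\rangle=0$, whence $\nabla_Y C_1=\nabla_Y C_2=0$; applying the Codazzi identity \eqref{eq:2} to the pair $(X,C_1)$ (using $\alpha^2+\beta^2\neq 0$) gives $\nabla_X C_1=\nabla_X C_2=0$, and then Lemmas~\ref{lemm:2} and~\ref{lemm:3} give $\nabla_{C_1}C_2=\nabla_{C_2}C_1=0$, $\nabla_{C_1}C_1=\nabla_{C_2}C_2\in\mathbb{R}X$, $\nabla_{C_1}X,\nabla_{C_2}X\in\mathbb{R}Y$. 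With all these connection coefficients the intrinsic expansion \eqref{eq:R} of $\langle R(C_1,C_2)C_1,C_2\rangle$ is identically $0$, while the Gauss equation \eqref{eq:Gauss} gives $\langle R(C_1,C_2)C_1,C_2\rangle=-(\alpha^2+\beta^2)<0$, a contradiction; so case~\ref{prop:4-1} is vacuous as well, and Lemma~\ref{lemm:204} follows in all cases.

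The step I expect to be the main obstacle is exactly this elimination of case~\ref{prop:4-1}: one has to track the mixed connection coefficients between $\{C_1,C_2\}$ and $\{X,Y\}$ carefully enough that all four terms in \eqref{eq:R} for $\langle R(C_1,C_2)C_1,C_2\rangle$ cancel, so that the Gauss value $-(\alpha^2+\beta^2)$ produces the contradiction.
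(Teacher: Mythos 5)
Your core mechanism is exactly the paper's: its entire proof of Lemma~\ref{lemm:204} is ``follows from $\{X, Y\}X$ and $\{Y, X\}Y$'', which, as you compute, yield $(a_{1}-a_{0})\langle\nabla_{X}X, Y\rangle=0$ and $(a_{0}-a_{1})\langle\nabla_{Y}Y, X\rangle=0$ and hence settle cases~\ref{prop:4-3} and~\ref{prop:4-4} of Proposition~\ref{prop:4}. You have also correctly put your finger on the fact that this Codazzi argument is vacuous when $a_{0}=a_{1}$ --- a point the paper's one-line proof passes over silently. In the paper this does no harm, because Lemma~\ref{lemm:204} is only ever invoked where $a_{0}\neq a_{1}$ (in Lemma~\ref{lemm:202} and in \eqref{eq:207}), and the cases $a_{0}=a_{1}$ are killed later, inside the proof of Theorem~\ref{theo:ne-3}: case~\ref{prop:4-2} by precisely the inequality $2\alpha a_{1}\leqq 0$ versus Lemma~\ref{lemm:6} that you give, and case~\ref{prop:4-1} by Lemma~\ref{lemm:203}. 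So your extra case analysis is not wrong-headed; it just duplicates work the paper defers.

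The one genuine gap is in your elimination of case~\ref{prop:4-1}. The first steps are fine: \eqref{eq:120} with Lemma~\ref{lemm:102} forces $\langle\nabla_{C_{1}}C_{1}, Y\rangle=\langle\nabla_{C_{1}}C_{2}, Y\rangle=0$, and Lemma~\ref{lemm:3} then gives $\langle\nabla_{Y}C_{1}, C_{2}\rangle=0$ and $\nabla_{Y}C_{1}=\nabla_{Y}C_{2}=0$. But the next step, ``applying Codazzi to $(X, C_{1})$ gives $\nabla_{X}C_{1}=\nabla_{X}C_{2}=0$'', does not follow: the remaining component $\langle\nabla_{X}C_{1}, C_{2}\rangle$ is tied by Lemma~\ref{lemm:3} (with $Z=X$) to $\langle\nabla_{C_{1}}C_{2}, X\rangle$, and nothing forces either to vanish. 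Consequently the claim that the intrinsic expansion \eqref{eq:R} of $\langle R(C_{1}, C_{2})C_{1}, C_{2}\rangle$ is identically zero is unsubstantiated; moreover your Gauss value $-(\alpha^{2}+\beta^{2})$ has the wrong sign, since \eqref{eq:Gauss} gives $\kappa+\nu(\alpha^{2}+\beta^{2})=\alpha^{2}+\beta^{2}>0$ here. The repair is short: the three vanishings you have already established are exactly hypothesis \eqref{eq:211}, so Lemma~\ref{lemm:203} (whose proof uses only Lemmas~\ref{lemm:2} and~\ref{lemm:3}, not Lemma~\ref{lemm:204}, so there is no circularity) immediately produces the contradiction $\alpha^{2}+\beta^{2}\leqq 0$. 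With that substitution your argument is complete, and it in fact proves the lemma in all four cases, which is more than the paper's stated proof does.
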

\begin{proof}
    Lemma \ref{lemm:204} follows from $\{X, Y\}X$ and $\{Y, X\}Y$.
\end{proof}
\begin{lemma}\label{lemm:202}
    If $a_{0}\neq a_{1}$, then we have $P_{1}P_{2}=0$.
\end{lemma}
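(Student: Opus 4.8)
The plan is to compute the curvature component $\langle R(X,Y)X,Y\rangle$ in two independent ways and compare. First I would pin down the relevant covariant derivatives in the frame $C_{1}, C_{2}, X, Y$. From $X\langle Y,Y\rangle=0$, from $\langle \nabla_{X}X,Y\rangle=0$ (Lemma \ref{lemm:204}), from $\langle \nabla_{X}X,C_{i}\rangle=0$ (Lemma \ref{lemm:101}) and from the trivial identity $\langle \nabla_{X}X,X\rangle=0$, every component of $\nabla_{X}X$ in the frame vanishes, so $\nabla_{X}X=0$. The same kind of argument (together with Lemma \ref{lemm:204}) gives $\langle \nabla_{X}Y,X\rangle=\langle \nabla_{X}Y,Y\rangle=0$ and $\langle \nabla_{Y}X,X\rangle=\langle \nabla_{Y}X,Y\rangle=0$, hence $\nabla_{X}Y=-P_{1}C_{1}+P_{2}C_{2}$ and $\nabla_{Y}X=-Q_{1}C_{1}+Q_{2}C_{2}$; also $\langle \nabla_{X}C_{i},Y\rangle=-P_{i}$ and $\langle \nabla_{C_{i}}X,Y\rangle=R_{i}$ directly from the definitions of $P_{i},R_{i}$.

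Next, since $a_{0}\neq a_{1}$, Proposition \ref{prop:4} forces case \ref{prop:4-3} or case \ref{prop:4-4}, so in particular $a_{0}a_{1}=0$. Putting $Z=X$, $W=Y$ in the Gauss equation \eqref{eq:Gauss} (with $\kappa=0$, $\nu=1$, $\langle X,Y\rangle=0$, $\langle X,X\rangle=-1$, $\langle Y,Y\rangle=1$) gives $\langle R(X,Y)X,Y\rangle=-\langle AX,X\rangle\langle AY,Y\rangle=a_{0}a_{1}=0$. On the other hand, substituting the covariant derivatives above into \eqref{eq:R} and using $\nabla_{X}X=0$ to kill the term $\langle \nabla_{Y}\nabla_{X}X,Y\rangle$, I obtain
\begin{equation*}
    \langle R(X,Y)X,Y\rangle
    =
    P_{1}Q_{1}-P_{2}Q_{2}+P_{1}R_{1}-P_{2}R_{2}-Q_{1}R_{1}+Q_{2}R_{2}.
\end{equation*}
Comparing the two computations yields the quadratic relation $P_{1}Q_{1}-P_{2}Q_{2}+P_{1}R_{1}-P_{2}R_{2}-Q_{1}R_{1}+Q_{2}R_{2}=0$.

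Finally I would substitute the linear expressions for $Q_{1},Q_{2},R_{1},R_{2}$ in terms of $P_{1},P_{2}$ from Lemma \ref{lemm:201}: formulas \eqref{eq:201} and \eqref{eq:202} in case \ref{prop:4-3}, and formulas \eqref{eq:203} and \eqref{eq:204} in case \ref{prop:4-4}. A short computation shows that the $P_{1}^{2}$ and $P_{2}^{2}$ contributions cancel identically, so the relation collapses to $4\beta(\alpha^{2}+\beta^{2})P_{1}P_{2}=0$ in case \ref{prop:4-3} and to $-4\alpha(\alpha^{2}+\beta^{2})P_{1}P_{2}=0$ in case \ref{prop:4-4}, the latter using $\alpha\neq 0$ from Lemma \ref{lemm:6}. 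Since $\beta>0$ and $\alpha^{2}+\beta^{2}>0$, in both cases $P_{1}P_{2}=0$.

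The only conceptual point is the observation $\nabla_{X}X=0$, which turns $\langle R(X,Y)X,Y\rangle$ into a homogeneous quadratic form in $(P_{1},P_{2})$ whose diagonal part vanishes identically, so that the whole relation reduces to a nonzero multiple of $P_{1}P_{2}$. The remaining work — inserting the formulas of Lemma \ref{lemm:201} and verifying the cancellation — is routine, but it is the place where a sign slip is most likely.
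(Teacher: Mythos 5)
Your proposal is correct and follows essentially the same route as the paper: compute $\langle R(Y,X)Y,X\rangle$ via \eqref{eq:R} using Lemmas \ref{lemm:101} and \ref{lemm:204}, set it to zero by the Gauss equation (since $a_{0}a_{1}=0$ when $a_{0}\neq a_{1}$), and substitute the linear relations of Lemma \ref{lemm:201} to reduce the quadratic form to a nonzero multiple of $P_{1}P_{2}$. (The paper's coefficients are $4\beta/\alpha$ and $-4\alpha/\beta$ rather than your denominator-cleared $4\beta(\alpha^{2}+\beta^{2})$ and $-4\alpha(\alpha^{2}+\beta^{2})$, but either way the factor is nonzero and the conclusion stands.)
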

\begin{proof}
    By Lemmas \ref{lemm:101} and \ref{lemm:204}, and \eqref{eq:R}, we have
    \begin{equation}
        \langle R(Y, X)Y, X\rangle
        =
        P_{1}Q_{1}-Q_{1}R_{1}+R_{1}P_{1}
        -P_{2}Q_{2}+Q_{2}R_{2}-R_{2}P_{2}
        .\label{eq:205}
    \end{equation}
    Using the Gauss equation, we have $\langle R(Y, X)Y, X\rangle=0$.
    In \ref{prop:4-3} of Proposition \ref{prop:4}, the right-hand side of \eqref{eq:205} is equal to $(4\beta/\alpha)P_{1}P_{2}=0$ by Lemma \ref{lemm:201}.
    In \ref{prop:4-4} of Proposition \ref{prop:4}, the right-hand side of \eqref{eq:205} is equal to $(-4\alpha/\beta)P_{1}P_{2}=0$ by Lemma \ref{lemm:201}.
\end{proof}
\begin{lemma}\label{lemm:203}
    We have $\langle\nabla_{C_{1}}C_{2}, Y\rangle\langle\nabla_{Y}C_{1}, C_{2}\rangle\neq 0$.
\end{lemma}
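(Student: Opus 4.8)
The plan is to argue by contradiction: assume $\langle\nabla_{C_1}C_2, Y\rangle\langle\nabla_Y C_1, C_2\rangle = 0$ and contradict the Gauss equation for the plane spanned by $C_1$ and $C_2$.

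First I would reduce the hypothesis to the vanishing of a single scalar. By Lemma~\ref{lemm:3} with $Z=Y$,
\[
    \langle\nabla_{C_1}C_2, Y\rangle
    =
    \frac{-2\beta^{2}}{(\alpha-a_1)^{2}+\beta^{2}}\,\langle\nabla_Y C_1, C_2\rangle,
\]
and since $\beta>0$ (so $(\alpha-a_1)^{2}+\beta^{2}>0$) the coefficient is strictly negative; hence the product in question is a negative multiple of $\langle\nabla_Y C_1, C_2\rangle^{2}$, which vanishes if and only if $\langle\nabla_Y C_1, C_2\rangle=0$. Assuming this, Lemma~\ref{lemm:3} also yields $\langle\nabla_{C_1}C_2, Y\rangle=\langle\nabla_{C_2}C_1, Y\rangle=0$ together with $\langle\nabla_{C_1}C_1, Y\rangle=\langle\nabla_{C_2}C_2, Y\rangle$.

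The crucial step is to show that the remaining coefficient $\langle\nabla_{C_1}C_1, Y\rangle$ also vanishes; Lemma~\ref{lemm:3} by itself does not give this, so I would bring in the Codazzi relation $\{C_1,C_2\}Y$, which has not been used so far. Expanding it and simplifying with Lemma~\ref{lemm:3} gives $\beta\,\langle\nabla_{C_1}C_1, Y\rangle=(\alpha-a_1)\,\langle\nabla_{C_1}C_2, Y\rangle$, and the right-hand side is now $0$, so $\langle\nabla_{C_1}C_1, Y\rangle=\langle\nabla_{C_2}C_2, Y\rangle=0$. Combining this with Lemma~\ref{lemm:2}, the metric-compatibility of $\nabla$, and Lemma~\ref{lemm:3} with $Z=X$, each of $\nabla_{C_1}C_1,\ \nabla_{C_2}C_2,\ \nabla_{C_1}C_2,\ \nabla_{C_2}C_1$ is forced to be a scalar multiple of $X$: with $p:=\langle\nabla_{C_1}C_1, X\rangle$, $v:=\langle\nabla_X C_1, C_2\rangle$ and $\gamma:=2\beta^{2}/((\alpha-a_0)^{2}+\beta^{2})>0$, one gets $\nabla_{C_1}C_1=\nabla_{C_2}C_2=-pX$ and $\nabla_{C_1}C_2=-\nabla_{C_2}C_1=\gamma vX$.

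Finally I would substitute these into \eqref{eq:R} to compute $\langle R(C_1,C_2)C_1,C_2\rangle$ intrinsically; a short computation (using $\langle\nabla_{C_1}X,C_2\rangle=-\langle\nabla_{C_1}C_2,X\rangle=\gamma v$, and similarly for $C_2$) gives $\langle R(C_1,C_2)C_1,C_2\rangle=-\gamma^{2}v^{2}-p^{2}-2\gamma v^{2}\le 0$. On the other hand, the Gauss equation \eqref{eq:Gauss} gives $\langle R(C_1,C_2)C_1,C_2\rangle=\kappa+\nu(\alpha^{2}+\beta^{2})=\alpha^{2}+\beta^{2}>0$, since here $\kappa=0$, $\nu=1$ and $\beta>0$. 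This contradiction proves the lemma. I expect the main obstacle to be precisely the middle step: without knowing that $\langle\nabla_{C_1}C_1,Y\rangle$ vanishes, the intrinsic expression for $\langle R(C_1,C_2)C_1,C_2\rangle$ acquires a $+\langle\nabla_{C_1}C_1,Y\rangle^{2}$ term that ruins the sign comparison, so one must identify and verify the right hitherto-unused Codazzi identity.
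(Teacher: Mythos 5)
Your proof is correct and follows essentially the same route as the paper: assume the product vanishes, use Lemma \ref{lemm:3} to kill all $Y$-components of $\nabla_{C_{i}}C_{j}$, and then contradict the Gauss equation by computing $\langle R(C_{1},C_{2})C_{1},C_{2}\rangle$ intrinsically as a non-positive quantity (your $-\gamma^{2}v^{2}-p^{2}-2\gamma v^{2}$ is exactly the paper's expression) while it must equal $\alpha^{2}+\beta^{2}>0$. The only cosmetic difference is that you obtain $\langle\nabla_{C_{1}}C_{1},Y\rangle=0$ from the Codazzi relation $\{C_{1},C_{2}\}Y$, whereas the paper reads the same linear relation off from \eqref{eq:4} (i.e., $\{C_{1},Y\}C_{2}$) inside the proof of Lemma \ref{lemm:3}, so the ``crucial step'' you flag is already available there.
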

\begin{proof}
    We suppose that $\langle\nabla_{C_{1}}C_{2}, Y\rangle\langle\nabla_{Y}C_{1}, C_{2}\rangle=0$.
    By Lemma \ref{lemm:3} and \eqref{eq:4}, we have
    \begin{equation}
        \langle\nabla_{C_{1}}C_{1}, Y\rangle=\langle\nabla_{C_{1}}C_{2}, Y\rangle=\langle\nabla_{Y}C_{1}, C_{2}\rangle=0
        .\label{eq:211}
    \end{equation}
    Using the Gauss equation, we have $\langle R(C_{1}, C_{2})C_{1}, C_{2}\rangle=\kappa+\nu(\alpha^{2}+\beta^{2})$.
    By Lemmas \ref{lemm:2} and \ref{lemm:3}, \eqref{eq:R}, and \eqref{eq:211}, we have
    \begin{align*}
        \kappa+\nu(\alpha^{2}+\beta^{2})
        &=
        -\langle\nabla_{C_{1}}C_{1}, X\rangle^{2}
        -\langle\nabla_{C_{1}}C_{2}, X\rangle^{2}
        -\frac{4\beta^{2}}{(\alpha-a_{0})^{2}+\beta^{2}}\langle\nabla_{X}C_{1}, C_{2}\rangle^{2}
        ,
    \end{align*}
    which is a contradiction.
\end{proof}
\begin{proof}[(The Proof of Theorem \ref{theo:ne-3})]
    In \ref{prop:4-1} of Proposition \ref{prop:4}, we obtain $\langle \nabla_{C_{1}}C_{1}, Y\rangle=\langle \nabla_{C_{1}}C_{2}, Y\rangle=0$ by Proposition \ref{prop:103} and Lemma \ref{lemm:102}, which is a contradiction by Lemma \ref{lemm:203}.

    In \ref{prop:4-2} of Proposition \ref{prop:4}, we have a contradiction by Proposition \ref{prop:103} and Lemmas \ref{lemm:6} and \ref{lemm:102}.

    In \ref{prop:4-3} of Proposition \ref{prop:4}, by Proposition \ref{prop:103} and Lemmas \ref{lemm:6}, \ref{lemm:201}, and \ref{lemm:203}, we have
    \begin{equation}
        2(\alpha^{2}+\beta^{2})
        =
        -2\langle\nabla_{C_{1}}C_{1}, Y\rangle^{2}
        -2\langle\nabla_{C_{1}}C_{2}, Y\rangle^{2}
        +\frac{2\beta^{2}}{\alpha^{2}+\beta^{2}}\left\{-(P_{1})^{2}+(P_{2})^{2}\right\}
        .\label{eq:206}
    \end{equation}
    By Lemma \ref{lemm:202}, $P_{1}=0$ or $P_{2}=0$ holds.
    If $P_{2}=0$, we have a contradiction by \eqref{eq:206}.
    Thus, we suppose that $P_{2}\neq 0$ and $P_{1}=0$.
    Using the Gauss equation, we have $\langle R(Y, X)Y, C_{1}\rangle=0$.
    By Lemmas \ref{lemm:101} and \ref{lemm:204}, we have
    \begin{equation}
        \langle R(Y, X)Y, C_{1}\rangle
        =
        -P_{2}\langle\nabla_{Y}C_{1}, C_{2}\rangle
        -Q_{1}\langle\nabla_{C_{1}}C_{1}, Y\rangle
        +P_{2}\langle\nabla_{C_{1}}C_{2}, Y\rangle
        .
        \label{eq:207}
    \end{equation}
    By \eqref{eq:3} of Lemma \ref{lemm:3} and Lemma \ref{lemm:201}, we have $-Q_{1}\langle\nabla_{C_{1}}C_{1}, Y\rangle+P_{2}\langle\nabla_{C_{1}}C_{2}, Y\rangle=-2P_{2}\langle\nabla_{Y}C_{1}, C_{2}\rangle$.
    Therefore, we have $\langle\nabla_{Y}C_{1}, C_{2}\rangle=0$, which is a contradiction by Lemma \ref{lemm:203}.

    In \ref{prop:4-4} of Proposition \ref{prop:4}, by Proposition \ref{prop:103} and Lemmas \ref{lemm:6}, \ref{lemm:201}, and \ref{lemm:203}, we have
    \begin{equation}
        0
        =
        -2\langle\nabla_{C_{1}}C_{1}, Y\rangle^{2}
        -2\langle\nabla_{C_{1}}C_{2}, Y\rangle^{2}
        +\frac{2\alpha^{2}}{\alpha^{2}+\beta^{2}}\left\{-(P_{1})^{2}+(P_{2})^{2}\right\}
        .\label{eq:208}
    \end{equation}
    By Lemma \ref{lemm:202}, $P_{1}=0$ or $P_{2}=0$ holds.
    If $P_{2}=0$, we have $\langle\nabla_{C_{1}}C_{2}, Y\rangle=0$ by \eqref{eq:208}, which is a contradiction by Lemma \ref{lemm:203}.
    If we suppose that $P_{2}\neq 0$ and $P_{1}=0$, we have a contradiction in a similar way to \ref{prop:4-3} of Proposition \ref{prop:4}.
\end{proof}

\subsubsection{Type IV}\label{sec:4-3-4}
\begin{theorem}\label{theo:ne-4}
    There exist no isoparametric hypersurfaces whose Petrov type is type IV of index 2 in $\mathbb{R}^{5}_{2}$ or $S^{5}_{2}$.
\end{theorem}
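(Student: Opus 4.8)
The plan is to follow the pattern of Theorems \ref{theo:ne-1} and \ref{theo:ne-2}. Suppose for contradiction that there is an isoparametric hypersurface $M$ whose Petrov type is type IV of index $2$ in $\mathbb{R}^{5}_{2}$, $S^{5}_{2}$, or $S^{5}_{3}$. By Proposition \ref{prop:1} there is a local frame field $C_{1}, C_{2}, D_{1}, D_{2}$ with
\[
    AC_{1}=\alpha C_{1}+\beta C_{2},\quad AC_{2}=-\beta C_{1}+\alpha C_{2},\quad AD_{1}=bD_{1},\quad AD_{2}=D_{1}+bD_{2},
\]
and with $\langle C_{i}, C_{j}\rangle=(-1)^{i}$ if $i=j$ and $0$ otherwise, $\langle D_{1}, D_{2}\rangle=\varepsilon$, $\langle D_{i}, D_{i}\rangle=0$, and all the mixed products $\langle C_{i}, D_{j}\rangle$ equal to $0$, where $\beta>0$ and $\varepsilon\in\{-1,1\}$. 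Since the sign of $\varepsilon$ can be reversed by $\xi\to-\xi$ (see \eqref{eq:bases2}) and this affects neither $\alpha^{2}+\beta^{2}$ nor $\kappa$, $\nu$, we may fix $\varepsilon$.

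First I would note that Lemma \ref{lemm:2} still applies by Remark \ref{rem:1}, so $\langle\nabla_{C_{1}}C_{1}, C_{2}\rangle=\langle\nabla_{C_{2}}C_{1}, C_{2}\rangle=0$; combined with the fact that $\nabla$ preserves $\langle\cdot,\cdot\rangle$, this shows that each of $\nabla_{C_{1}}C_{1}$, $\nabla_{C_{1}}C_{2}$, $\nabla_{C_{2}}C_{1}$, $\nabla_{C_{2}}C_{2}$ lies in $\mathrm{span}(D_{1}, D_{2})$. Next, in analogy with Lemma \ref{lemm:11}, I would write $a^{i}_{m}$ for the components $\langle\nabla_{C_{\bullet}}C_{\bullet}, D_{i}\rangle$ and feed the Codazzi equations $\{D_{1}, C_{1}\}C_{1}$, $\{D_{1}, C_{1}\}C_{2}$, $\{D_{1}, C_{2}\}C_{1}$, $\{D_{2}, C_{1}\}C_{2}$, $\{D_{2}, C_{2}\}C_{1}$, $\{D_{2}, C_{2}\}C_{2}$ (and, if needed, $\{D_{1}, C_{2}\}C_{2}$ and $\{D_{2}, C_{1}\}C_{1}$) into a linear system whose unknowns are the $a^{i}_{m}$ together with $\langle\nabla_{D_{1}}C_{1}, C_{2}\rangle$ and $\langle\nabla_{D_{2}}C_{1}, C_{2}\rangle$. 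Because $\beta\neq0$, solving this system should force $\langle\nabla_{D_{1}}C_{1}, C_{2}\rangle=\langle\nabla_{D_{2}}C_{1}, C_{2}\rangle=0$ and leave exactly one bilinear relation among the $a^{i}_{m}$, which will be the identity used in the curvature computation below.

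With these relations established, I would prove the analog of Proposition \ref{prop:7}: expanding $\langle R(C_{1}, C_{2})C_{1}, C_{2}\rangle$ by \eqref{eq:R}, the terms $\langle\nabla_{\nabla_{C_{1}}C_{2}}C_{1}, C_{2}\rangle$ and $\langle\nabla_{\nabla_{C_{2}}C_{1}}C_{1}, C_{2}\rangle$ vanish because $\nabla_{C_{1}}C_{2},\nabla_{C_{2}}C_{1}\in\mathrm{span}(D_{1}, D_{2})$ and $\langle\nabla_{D_{i}}C_{1}, C_{2}\rangle=0$, while $\langle\nabla_{C_{1}}\nabla_{C_{2}}C_{1}, C_{2}\rangle-\langle\nabla_{C_{2}}\nabla_{C_{1}}C_{1}, C_{2}\rangle$ reduces to a bilinear expression in the $a^{i}_{m}$ which vanishes by the Codazzi relation; hence $\langle R(C_{1}, C_{2})C_{1}, C_{2}\rangle=0$. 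On the other hand the Gauss equation \eqref{eq:Gauss} gives $\langle R(C_{1}, C_{2})C_{1}, C_{2}\rangle=\kappa+\nu(\alpha^{2}+\beta^{2})$, so $\kappa+\nu(\alpha^{2}+\beta^{2})=0$. If $\delta=\kappa\nu=0$ then $\kappa=0$ (since $\nu\neq0$), whence $\alpha^{2}+\beta^{2}=0$, contradicting $\beta>0$; if $\delta=1$ then $\kappa=\nu=1$ and $\kappa+\nu(\alpha^{2}+\beta^{2})\geqq1>0$, again a contradiction. This rules out $\mathbb{R}^{5}_{2}$ and $S^{5}_{2}$ and proves Theorem \ref{theo:ne-4}. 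In the remaining case $\delta=-1$ (that is, $S^{5}_{3}$) the same identity only yields $\alpha^{2}+\beta^{2}=1$, which I would record as a corollary stating that a complex principal curvature $\lambda$ of such a hypersurface in $S^{5}_{3}$ satisfies $|\lambda|=1$, parallel to the corollaries after Theorems \ref{theo:ne-1} and \ref{theo:ne-2}.

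The main obstacle will be the bookkeeping in the Codazzi step. Because the metric restricted to $\mathrm{span}(D_{1}, D_{2})$ is the null pairing $\begin{pmatrix}0&\varepsilon\\\varepsilon&0\end{pmatrix}$ rather than the diagonal metric that occurs in type II, raising and lowering indices exchanges $D_{1}$ and $D_{2}$, and the Jordan relation $AD_{2}=D_{1}+bD_{2}$ inserts extra $D_{1}$-terms into the equations $\{D_{2}, C_{j}\}C_{k}$; one has to check that the resulting linear system remains non-degenerate enough to annihilate $\langle\nabla_{D_{i}}C_{1}, C_{2}\rangle$ and to produce exactly the bilinear identity that cancels the curvature expression. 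A secondary point to watch is the degenerate possibility $b=\alpha$, where some coefficients of the system collapse; I expect the argument still goes through, but that case should be re-examined separately.
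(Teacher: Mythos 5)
Your overall skeleton is the right one (and matches the paper's endgame): show $\kappa+\nu(\alpha^{2}+\beta^{2})=0$ by computing $\langle R(C_{1},C_{2})C_{1},C_{2}\rangle$ two ways, then rule out $\delta=0$ and $\delta=1$ and record the $|\lambda|=1$ corollary for $S^{5}_{3}$. But the step you delegate to an ``analog of Lemma \ref{lemm:11}'' is a genuine gap: the Codazzi equations $\{D_{i},C_{j}\}C_{k}$ do \emph{not} force $\langle\nabla_{D_{1}}C_{1},C_{2}\rangle=\langle\nabla_{D_{2}}C_{1},C_{2}\rangle=0$ here. Working them out (with $AD_{1}=bD_{1}$, $AD_{2}=D_{1}+bD_{2}$) gives, e.g., $0=(\alpha-b)\langle\nabla_{C_{1}}C_{2},D_{1}\rangle-\beta\langle\nabla_{C_{1}}C_{1},D_{1}\rangle$ and $-2\beta\langle\nabla_{D_{1}}C_{1},C_{2}\rangle=(\alpha-b)\langle\nabla_{C_{1}}C_{1},D_{1}\rangle+\beta\langle\nabla_{C_{1}}C_{2},D_{1}\rangle$; since $(\alpha-b)^{2}+\beta^{2}\neq 0$ this only \emph{expresses} $\langle\nabla_{D_{1}}C_{1},C_{2}\rangle$ in terms of the free parameter $\langle\nabla_{C_{1}}C_{1},D_{1}\rangle$ (exactly as in Lemmas \ref{lemm:3} and \ref{lemm:105}, which relate rather than annihilate these quantities). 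The special feature of type II that made Lemma \ref{lemm:11} work --- the coefficient $(\alpha_{1}-\alpha_{2})$ vanishing when the two complex curvatures coincide --- has no counterpart in type IV, so the linear system stays consistent with nonzero solutions and your curvature computation cannot proceed.

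The missing idea, which is how the paper closes the argument (Proposition \ref{prop:106}), is to exploit the nullity of the first Jordan vector: writing the real block on a frame $X_{1},X_{2}$ with $\langle X_{i},X_{i}\rangle=0$, $\langle X_{1},X_{2}\rangle=1$, $AX_{1}=bX_{1}$, the Gauss equation gives $\langle R(C_{i},X_{1})C_{i},X_{1}\rangle=0$ outright (every term contains $\langle X_{1},C_{i}\rangle$ or $\langle X_{1},X_{1}\rangle$). Expanding the same components via \eqref{eq:R}, using Lemmas \ref{lemm:2}, \ref{lemm:104}, and \ref{lemm:105}, and subtracting the $i=1$ and $i=2$ expressions cancels the unknown second-derivative term and leaves $-2\langle\nabla_{C_{1}}C_{1},X_{1}\rangle^{2}-2\langle\nabla_{C_{1}}C_{2},X_{1}\rangle^{2}=0$. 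That sum-of-squares argument is what actually yields \eqref{eq:137}, i.e.\ the vanishing of all $X_{1}$-pairings including $\langle\nabla_{X_{1}}C_{1},C_{2}\rangle$; and since every cross term in $\langle R(C_{1},C_{2})C_{1},C_{2}\rangle$ pairs an $X_{1}$-component against an $X_{2}$-component (because the $D$-plane is null), this suffices for Proposition \ref{prop:107}. Your proposal never invokes these extra curvature components, so as written it does not go through; your worry about the case $b=\alpha$ is, by contrast, harmless, since the paper's route never divides by $\alpha-b$.
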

Suppose that there exists an isoparametric hypersurface $M$ whose Petrov type is type IV of index 2 in $\mathbb{R}^{5}_{2}$, $S^{5}_{2}$ or $S^{5}_{3}$.
By Proposition \ref{prop:1}, there exists a local frame field $C_{1}, C_{2}, X_{1}, X_{2}$ such that
\begin{equation*}
    AC_{1}=\alpha_{1} C_{1}+\beta_{1} C_{2},\quad
    AC_{2}=-\beta_{1} C_{1}+\alpha_{1} C_{2},\quad
    AX_{1}=bX_{1},\quad
    AX_{2}=bX_{2}+X_{1},
\end{equation*}
\begin{equation*}
    \langle C_{i}, C_{j}\rangle
    =
    \begin{cases}
        (-1)^{i}, & (i=j),\\
        0, & (\text{otherwise}),
    \end{cases}
    \quad
    \langle X_{i}, X_{j}\rangle
    =
    \begin{cases}
        1, & (i\neq j),\\
        0, & (\text{otherwise}),
    \end{cases}
    \quad
    \langle C_{i}, X_{1}\rangle
    =
    \langle C_{i}, X_{2}\rangle
    =
    0.
\end{equation*}
\begin{lemma}\label{lemm:104}
    We have $\langle\nabla_{X_{1}}X_{1}, C_{1}\rangle=\langle\nabla_{X_{1}}X_{1}, C_{2}\rangle=\langle\nabla_{X_{1}}X_{2}, C_{1}\rangle=\langle\nabla_{X_{1}}X_{2}, C_{2}\rangle=0$.
\end{lemma}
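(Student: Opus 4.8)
The plan is to derive Lemma \ref{lemm:104} purely from the Codazzi equation, written in the bracket notation $\{X, Y\}Z$ of \eqref{eq:Codazzi}, in the same spirit as the proofs of Lemmas \ref{lemm:2}, \ref{lemm:101}, and \ref{lemm:3}. Two elementary remarks will be used repeatedly. Since $\nabla$ preserves $\langle\cdot,\cdot\rangle$ and $\langle X_{1}, X_{1}\rangle=0$, one has $\langle\nabla_{Z}X_{1}, X_{1}\rangle=0$ for every vector field $Z$; and, differentiating $\langle C_{i}, X_{j}\rangle=0$ along $X_{1}$, one gets $\langle\nabla_{X_{1}}C_{i}, X_{j}\rangle=-\langle\nabla_{X_{1}}X_{j}, C_{i}\rangle$ for $i, j\in\{1, 2\}$. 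With these, each Codazzi relation below collapses to a linear equation in the four quantities to be killed.

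First I would treat $\langle\nabla_{X_{1}}X_{1}, C_{1}\rangle$ and $\langle\nabla_{X_{1}}X_{1}, C_{2}\rangle$ via $\{C_{1}, X_{1}\}X_{1}$ and $\{C_{2}, X_{1}\}X_{1}$. Because $AX_{1}=bX_{1}$ with $b$ constant, the left-hand side of each of these vanishes identically; expanding the right-hand sides with $AC_{1}=\alpha_{1}C_{1}+\beta_{1}C_{2}$, $AC_{2}=-\beta_{1}C_{1}+\alpha_{1}C_{2}$ and the two remarks gives the homogeneous system
\begin{align*}
    (\alpha_{1}-b)\langle\nabla_{X_{1}}X_{1}, C_{1}\rangle+\beta_{1}\langle\nabla_{X_{1}}X_{1}, C_{2}\rangle&=0,\\
    \beta_{1}\langle\nabla_{X_{1}}X_{1}, C_{1}\rangle-(\alpha_{1}-b)\langle\nabla_{X_{1}}X_{1}, C_{2}\rangle&=0,
\end{align*}
whose coefficient determinant $-\big((\alpha_{1}-b)^{2}+\beta_{1}^{2}\big)$ is nonzero since $\beta_{1}>0$; hence both brackets vanish. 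Next I would apply $\{C_{1}, X_{1}\}X_{2}$ and $\{C_{2}, X_{1}\}X_{2}$ to reach $\langle\nabla_{X_{1}}X_{2}, C_{1}\rangle$ and $\langle\nabla_{X_{1}}X_{2}, C_{2}\rangle$. Now $AX_{2}=bX_{2}+X_{1}$, so the left-hand side of $\{C_{i}, X_{1}\}X_{2}$ reduces to $-\langle\nabla_{C_{i}}X_{1}, X_{1}\rangle=0$, while the right-hand side is handled exactly as before, except for an extra term proportional to $\langle\nabla_{X_{1}}X_{1}, C_{i}\rangle$. The latter already vanishes by the previous step, so one is left with the same nondegenerate system in $\langle\nabla_{X_{1}}X_{2}, C_{1}\rangle$ and $\langle\nabla_{X_{1}}X_{2}, C_{2}\rangle$, forcing both to be zero. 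This proves Lemma \ref{lemm:104}.

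I do not expect a genuine obstacle here: the argument is entirely linear-algebraic once the Codazzi relations are written out, and the only structural input is $\beta_{1}>0$, which keeps the $2\times2$ determinant away from zero (the exact analogue of the $\beta\neq 0$ hypothesis used everywhere in Section \ref{sec:4-3}). The one place demanding care is sign bookkeeping: $\langle C_{1}, C_{1}\rangle=-1$ but $\langle C_{2}, C_{2}\rangle=1$, so passing between a vector and its $C_{i}$-components flips a sign for $i=1$ only, and one must remember that every term of the form $\langle\nabla_{\bullet}X_{1}, X_{1}\rangle$ drops out because $X_{1}$ is null.
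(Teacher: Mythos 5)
Your proposal is correct and follows essentially the same route as the paper: the paper also derives $\langle\nabla_{X_{1}}X_{1}, C_{i}\rangle=0$ from $\{C_{1}, X_{1}\}X_{1}$ and $\{C_{2}, X_{1}\}X_{1}$ (in the manner of Lemma \ref{lemm:101}), and then obtains the remaining two vanishings from $\{C_{1}, X_{1}\}X_{2}$ and $\{C_{2}, X_{1}\}X_{2}$, whose extra terms $\langle\nabla_{X_{1}}X_{1}, C_{i}\rangle$ drop out by the first step, leaving the same nondegenerate $2\times 2$ system with determinant $\pm\bigl((\alpha-b)^{2}+\beta^{2}\bigr)\neq 0$.
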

\begin{proof}
    In a similar way to Lemma \ref{lemm:101}, we have $\langle\nabla_{X_{1}}X_{1}, C_{1}\rangle=\langle\nabla_{X_{1}}X_{1}, C_{2}\rangle=0$ by $\{C_{1}, X_{1}\}X_{1}$ and $\{C_{2}, X_{1}\}X_{1}$.
    On the other hand, we have
    \begin{align}
        &\{C_{1}, X_{1}\}X_{2};
        &
        0&=
        -(\alpha-b)\langle\nabla_{X_{1}}X_{2}, C_{1}\rangle-\beta\langle \nabla_{X_{1}}X_{2}, C_{2}\rangle+\langle\nabla_{X_{1}}X_{1}, C_{1}\rangle,\label{eq:127}\\
        &\{C_{2}, X_{1}\}X_{2};
        &
        0&=
        -(\alpha-b)\langle\nabla_{X_{1}}X_{2}, C_{2}\rangle+\beta\langle \nabla_{X_{1}}X_{2}, C_{1}\rangle+\langle\nabla_{X_{1}}X_{1}, C_{2}\rangle
        .
        \label{eq:128}
    \end{align}
    We should solve the system of linear equations \eqref{eq:127} and \eqref{eq:128}.
\end{proof}
\begin{lemma}
    \label{lemm:105}
    For $Z=X_{1}, X_{2}$, we have
    \begin{equation*}
        \langle \nabla_{C_{1}}C_{1}, Z\rangle
        =
        \langle \nabla_{C_{2}}C_{2}, Z\rangle
        ,\quad
        \langle \nabla_{C_{1}}C_{2}, Z\rangle
        =
        -
        \langle \nabla_{C_{2}}C_{1}, Z\rangle.
    \end{equation*}
    And we have
    \begin{equation*}
        \langle \nabla_{C_{1}}C_{2}, X_{1}\rangle
        =
        \frac{-2\beta^{2}}{(\alpha-b)^{2}+\beta^{2}}
        \langle \nabla_{X_{1}}C_{1}, C_{2}\rangle
        .
    \end{equation*}
\end{lemma}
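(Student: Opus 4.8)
\emph{Plan.} The plan is to imitate the proof of Lemma \ref{lemm:3}. For each $Z\in\{X_1,X_2\}$ I would write out the four Codazzi equations $\{C_1,Z\}C_1$, $\{C_1,Z\}C_2$, $\{C_2,Z\}C_1$, $\{C_2,Z\}C_2$ and normalize each one, using the metric-compatibility identities $\langle\nabla_W C_1,C_2\rangle=-\langle\nabla_W C_2,C_1\rangle$, $\langle\nabla_W C_i,C_i\rangle=0$ and $\langle\nabla_W C_i,X_j\rangle=-\langle\nabla_W X_j,C_i\rangle$ (for $W\in\{C_1,C_2,X_1,X_2\}$), into a relation among the quantities $\langle\nabla_{C_i}C_j,Z\rangle$ and $\langle\nabla_Z C_1,C_2\rangle$; here I abbreviate $\alpha:=\alpha_1$, $\beta:=\beta_1$ as in Lemma \ref{lemm:3}. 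The only arithmetic facts needed are $\beta\neq 0$ and $(\alpha-b)^2+\beta^2>0$.

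First I would treat $Z=X_1$. Since $AX_1=bX_1$ is a genuine eigenvector, the four Codazzi equations reduce to exactly the system \eqref{eq:3}, \eqref{eq:4}, \eqref{eq:5}, \eqref{eq:6} of Lemma \ref{lemm:3} with $a_Z$ replaced by $b$; its coefficient determinant is $(\alpha-b)^2+\beta^2\neq 0$ and its inhomogeneous term is a multiple of $\langle\nabla_{X_1}C_1,C_2\rangle$, so solving it gives at once $\langle\nabla_{C_1}C_1,X_1\rangle=\langle\nabla_{C_2}C_2,X_1\rangle$, $\langle\nabla_{C_1}C_2,X_1\rangle=-\langle\nabla_{C_2}C_1,X_1\rangle$, and the explicit value $\langle\nabla_{C_1}C_2,X_1\rangle=\frac{-2\beta^2}{(\alpha-b)^2+\beta^2}\langle\nabla_{X_1}C_1,C_2\rangle$. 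Then I would treat $Z=X_2$: because $AX_2=bX_2+X_1$, the term $\langle\nabla_{C_i}(AX_2),C_j\rangle$ now carries an extra summand $\langle\nabla_{C_i}X_1,C_j\rangle=-\langle\nabla_{C_i}C_j,X_1\rangle$, which the previous step has already computed. Hence the four Codazzi equations $\{C_i,X_2\}C_j$ have the same left-hand sides in the $\langle\nabla_{C_i}C_j,X_2\rangle$ as in the $X_1$ case, but with inhomogeneous terms built from the $X_1$-coefficients $\langle\nabla_{C_1}C_1,X_1\rangle$, $\langle\nabla_{C_1}C_2,X_1\rangle$ (and from $\langle\nabla_{X_2}C_1,C_2\rangle$). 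Subtracting $\{C_2,X_2\}C_2$ from $\{C_1,X_2\}C_1$ and adding $\{C_1,X_2\}C_2$ to $\{C_2,X_2\}C_1$, those inhomogeneous terms cancel by the symmetry relations just obtained for $X_1$, leaving a homogeneous $2\times2$ system in $\langle\nabla_{C_1}C_1,X_2\rangle-\langle\nabla_{C_2}C_2,X_2\rangle$ and $\langle\nabla_{C_1}C_2,X_2\rangle+\langle\nabla_{C_2}C_1,X_2\rangle$ whose determinant is again $(\alpha-b)^2+\beta^2\neq 0$; hence both combinations vanish, which are precisely the two symmetry identities for $Z=X_2$. (No formula for $\langle\nabla_{C_1}C_2,X_2\rangle$ is claimed, consistently with the fact that the genuinely new coefficient $\langle\nabla_{X_2}C_1,C_2\rangle$ is never pinned down by this argument.) The frame field with the stated properties is the one furnished just before Lemma \ref{lemm:104} by Proposition \ref{prop:1}, and Lemma \ref{lemm:2} is available by Remark \ref{rem:1} if needed.

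The work is entirely bookkeeping, and the main obstacle is just to carry it out faithfully: reducing each raw Codazzi identity $\{C_i,Z\}C_j$ to its normalized form in the $\langle\nabla_{C_i}C_j,Z\rangle$ and $\langle\nabla_Z C_1,C_2\rangle$, and then verifying in the $Z=X_2$ step that after the prescribed linear combinations every term not of that shape — in particular $\langle\nabla_{X_2}C_1,C_2\rangle$ and all the $X_1$-coefficients — really does cancel, so that the system genuinely collapses to the nonsingular $2\times2$ one above. There is no conceptual difficulty beyond this.
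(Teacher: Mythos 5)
Your proposal is correct and follows the same route as the paper: for $Z=X_1$ it reuses the system from Lemma \ref{lemm:3} with $a_Z=b$, and for $Z=X_2$ it writes the four Codazzi relations $\{C_i,X_2\}C_j$ (which acquire the extra $\langle\nabla_{C_i}C_j,X_1\rangle$ terms exactly as you describe, matching \eqref{eq:129}--\eqref{eq:132}) and solves the resulting linear system. Your explicit eliminations --- $\eqref{eq:129}-\eqref{eq:132}$ and $\eqref{eq:130}+\eqref{eq:131}$, using the $X_1$ symmetries to cancel the inhomogeneous terms and leaving a nonsingular homogeneous $2\times 2$ system --- are just a concrete way of carrying out the paper's ``solve the system'' step.
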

\begin{proof}
    In a similar way to Lemma \ref{lemm:3}, we have the above claim for $Z=X_{1}$ by $\{C_{1}, X_{1}\}C_{1}$, $\{C_{1}, X_{1}\}C_{2}$, $\{C_{2}, X_{1}\}C_{1}$ and $\{C_{2}, X_{1}\}C_{2}$.
    On the other hand, we have
    \begin{align}
        &\{C_{1}, X_{2}\}C_{1};
        &
        (\alpha-b)\langle \nabla_{C_{1}}C_{1}, X_{2}\rangle+\beta\langle \nabla_{C_{1}}C_{2}, X_{2}\rangle-\langle\nabla_{C_{1}}C_{1}, X_{1}\rangle&=-2\beta\langle \nabla_{X_{2}}C_{1}, C_{2}\rangle,\label{eq:129}\\
        &\{C_{1}, X_{2}\}C_{2};
        &
        (\alpha-b)\langle \nabla_{C_{1}}C_{2}, X_{2}\rangle-\beta\langle \nabla_{C_{1}}C_{1}, X_{2}\rangle-\langle\nabla_{C_{1}}C_{2}, X_{1}\rangle&=0,\label{eq:130}\\
        &\{C_{2}, X_{2}\}C_{1};
        &
        (\alpha-b)\langle \nabla_{C_{2}}C_{1}, X_{2}\rangle+\beta\langle \nabla_{C_{2}}C_{2}, X_{2}\rangle-\langle\nabla_{C_{2}}C_{1}, X_{1}\rangle&=0,\label{eq:131}\\
        &\{C_{2}, X_{2}\}C_{2};
        &
        (\alpha-b)\langle \nabla_{C_{2}}C_{2}, X_{2}\rangle-\beta\langle \nabla_{C_{2}}C_{1}, X_{2}\rangle-\langle\nabla_{C_{2}}C_{2}, X_{1}\rangle&=-2\beta\langle \nabla_{X_{2}}C_{1}, C_{2}\rangle.\label{eq:132}
    \end{align}
    We note that $\beta=0$.
    We should solve the system of linear equation \eqref{eq:129}, \eqref{eq:130}, \eqref{eq:131}, and \eqref{eq:132}.
\end{proof}
\begin{proposition}\label{prop:106}
    We have
    \begin{equation}
        \langle\nabla_{C_{1}}C_{1}, X_{1}\rangle=\langle\nabla_{C_{1}}C_{2}, X_{1}\rangle=\langle\nabla_{C_{2}}C_{1}, X_{1}\rangle=\langle\nabla_{C_{2}}C_{2}, X_{1}\rangle=\langle\nabla_{X_{1}}C_{1}, C_{2}\rangle=0
        .
        \label{eq:137}
    \end{equation}
\end{proposition}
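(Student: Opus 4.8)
Here is how I would attack the statement.

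\smallskip
The plan is to reduce everything to the vanishing of the single function $r:=\langle\nabla_{X_{1}}C_{1},C_{2}\rangle$. Put $p:=\langle\nabla_{C_{1}}C_{1},X_{1}\rangle$ and $q:=\langle\nabla_{C_{1}}C_{2},X_{1}\rangle$. By Lemma \ref{lemm:105} we have $\langle\nabla_{C_{2}}C_{2},X_{1}\rangle=p$, $\langle\nabla_{C_{2}}C_{1},X_{1}\rangle=-q$ and $q=\frac{-2\beta_{1}^{2}}{(\alpha_{1}-b)^{2}+\beta_{1}^{2}}\,r$, while the Codazzi equation $\{C_{1},X_{1}\}C_{2}$ gives $(\alpha_{1}-b)q-\beta_{1}p=0$, hence $p=\frac{\alpha_{1}-b}{\beta_{1}}q$. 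Thus $p$ and $q$ are fixed constant multiples of $r$, the multiple for $q$ being nonzero; therefore \eqref{eq:137} will follow once we prove $r\equiv 0$.

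\smallskip
First I would make the connection explicit in the frame $C_{1},C_{2},X_{1},X_{2}$. Since $\nabla$ preserves $\langle\cdot,\cdot\rangle$, Lemma \ref{lemm:104} gives $\nabla_{X_{1}}C_{1}=rC_{2}$ and $\nabla_{X_{1}}C_{2}=rC_{1}$; writing $h:=\langle\nabla_{X_{1}}X_{1},X_{2}\rangle$ one also gets $\nabla_{X_{1}}X_{1}=hX_{1}$ and $\nabla_{X_{1}}X_{2}=-hX_{2}$. Using Lemma \ref{lemm:2} (see Remark \ref{rem:1}) and Lemma \ref{lemm:105} one similarly obtains
\begin{equation*}
    \nabla_{C_{1}}C_{1}=aX_{1}+pX_{2},\quad \nabla_{C_{1}}C_{2}=fX_{1}+qX_{2},\quad \nabla_{C_{2}}C_{1}=-fX_{1}-qX_{2},
\end{equation*}
\begin{equation*}
    \nabla_{C_{1}}X_{1}=pC_{1}-qC_{2}+sX_{1},\quad \nabla_{C_{2}}X_{1}=-qC_{1}-pC_{2}+s'X_{1},
\end{equation*}
for suitable functions $a,f,s,s'$.

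\smallskip
The key step is to evaluate two components of $R$ twice, via the Gauss equation \eqref{eq:Gauss} and via \eqref{eq:R}. For $C_{1},X_{1},C_{1},X_{1}$ the Gauss equation gives $\langle R(C_{1},X_{1})C_{1},X_{1}\rangle=0$, whereas substituting the formulas above into \eqref{eq:R} gives $\langle R(C_{1},X_{1})C_{1},X_{1}\rangle=-X_{1}p+hp-p^{2}-q^{2}$; hence
\begin{equation*}
    X_{1}p=hp-p^{2}-q^{2}.
\end{equation*}
For $C_{2},X_{1},C_{1},X_{1}$ the Gauss equation again gives $0$; up to the symmetries of $R$ this component also equals $\langle R(X_{1},C_{1})X_{1},C_{2}\rangle$, and evaluating both $\langle R(C_{2},X_{1})C_{1},X_{1}\rangle$ and $\langle R(X_{1},C_{1})X_{1},C_{2}\rangle$ by \eqref{eq:R} yields the two expressions $2rp+X_{1}q-hq$ and $2rp-X_{1}q+hq$. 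Since each equals $0$, adding them gives $rp=0$.

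\smallskip
I would then finish by cases. If $\alpha_{1}\neq b$, then $p$ is a nonzero constant times $r$, so $rp=0$ forces $r\equiv 0$. If $\alpha_{1}=b$, then $p\equiv 0$, so $X_{1}p=0$ and the displayed identity becomes $0=-q^{2}$, whence $q\equiv 0$ and $r\equiv 0$. In either case $r\equiv 0$, and then \eqref{eq:137} follows from Lemma \ref{lemm:105} and $\{C_{1},X_{1}\}C_{2}$ as above. I expect the main obstacle to be the two curvature evaluations by \eqref{eq:R}: because $X_{1},X_{2}$ span a totally null $2$-plane there is no positivity to exploit as in the Lorentzian case of \cite{MR783023}, so the vanishing of $r$ must be squeezed out of the precise algebraic form of \eqref{eq:R} together with the Codazzi-forced proportionality of $p$ and $q$ to $r$, with the degenerate case $\alpha_{1}=b$ handled separately by the first identity.
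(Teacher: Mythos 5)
Your proof is correct, but it takes a genuinely different route from the paper's. The paper pairs the two \emph{diagonal} curvature components $\langle R(C_{1},X_{1})C_{1},X_{1}\rangle$ and $\langle R(C_{2},X_{1})C_{2},X_{1}\rangle$: by Lemmas \ref{lemm:2}, \ref{lemm:104}, and \ref{lemm:105} the second-derivative terms in the two expressions coincide (both reduce to $\langle\nabla_{X_{1}}\nabla_{C_{1}}C_{1},X_{1}\rangle$, since $\langle\nabla_{C_{2}}C_{2},X_{1}\rangle=\langle\nabla_{C_{1}}C_{1},X_{1}\rangle$), so subtracting the two Gauss-equation identities kills the derivative term and leaves $0=-2\langle\nabla_{C_{1}}C_{1},X_{1}\rangle^{2}-2\langle\nabla_{C_{1}}C_{2},X_{1}\rangle^{2}$, giving $p=q=0$ in one stroke, after which $r=0$ drops out of Lemma \ref{lemm:105}. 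You instead combine the single diagonal component with the off-diagonal one $\langle R(C_{2},X_{1})C_{1},X_{1}\rangle$ (evaluated two ways via \eqref{eq:R}), together with the Codazzi proportionalities $p=\frac{\alpha-b}{\beta}\,q$ and $q=\frac{-2\beta^{2}}{(\alpha-b)^{2}+\beta^{2}}\,r$, to extract $rp=0$ and then split on whether $\alpha=b$. I checked your three evaluations ($-X_{1}p+hp-p^{2}-q^{2}$, $2rp+X_{1}q-hq$, and $2rp-X_{1}q+hq$) and your connection coefficients against Lemmas \ref{lemm:2}, \ref{lemm:104}, and \ref{lemm:105}; they are all correct, and the case analysis closes both branches, so the argument is complete. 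One remark: your closing worry that the null plane spanned by $X_{1},X_{2}$ leaves ``no positivity to exploit'' is not quite right --- the paper's choice of the $C_{2}$-diagonal component as the partner is precisely what recovers a definite sum of squares, which is why it needs neither the proportionality constants nor a case distinction. Your version is a correct, self-contained alternative, just slightly longer.
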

\begin{proof}
    We note Remark \ref{rem:1}.
    By Lemmas \ref{lemm:2}, \ref{lemm:104}, and \ref{lemm:105}, and \eqref{eq:R}, we have
    \begin{align}
        \langle R(C_{1}, X_{1})C_{1}, X_{1}\rangle
        &=
        -\langle\nabla_{C_{1}}C_{1}, X_{1}\rangle^{2}-\langle\nabla_{C_{1}}C_{2}, X_{1}\rangle^{2}
        -
        \langle \nabla_{X_{1}}\nabla_{C_{1}}C_{1}, X_{1}\rangle
        .
        \label{eq:134}
    \end{align}

    Similarly, we have
    \begin{align}
        \langle R(C_{2}, X_{1})C_{2}, X_{1}\rangle
        &=
        \langle\nabla_{C_{1}}C_{1}, X_{1}\rangle^{2}+\langle\nabla_{C_{1}}C_{2}, X_{1}\rangle^{2}
        -
        \langle \nabla_{X_{1}}\nabla_{C_{1}}C_{1}, X_{1}\rangle
        .
        \label{eq:136}
    \end{align}
    By \eqref{eq:134} and \eqref{eq:136}, and using the Gauss equation, we have
    \begin{equation*}
        0
        =
        \langle R(C_{1}, X_{1})C_{1}, X_{1}\rangle -\langle R(C_{2}, X_{1})C_{2}, X_{1}\rangle
        =
        -2\langle\nabla_{C_{1}}C_{1}, X_{1}\rangle^{2}
        -2\langle\nabla_{C_{1}}C_{2}, X_{1}\rangle^{2}
        .
    \end{equation*}
    We have \eqref{eq:137} by Lemma \ref{lemm:105}.
\end{proof}
\begin{proposition}\label{prop:107}
    We have $\kappa+\nu(\alpha^{2}+\beta^{2})=0$.
\end{proposition}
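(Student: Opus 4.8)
The plan is to follow the template of the proofs of Propositions \ref{prop:3} and \ref{prop:7}: evaluate $\langle R(C_1,C_2)C_1,C_2\rangle$ in two ways and equate the results. On one side, substituting $\langle C_1,C_1\rangle=-1$, $\langle C_2,C_2\rangle=1$, $\langle C_1,C_2\rangle=0$ and $AC_1=\alpha C_1+\beta C_2$, $AC_2=-\beta C_1+\alpha C_2$ into the Gauss equation \eqref{eq:Gauss} gives $\langle R(C_1,C_2)C_1,C_2\rangle=\kappa+\nu(\alpha^2+\beta^2)$. So it is enough to show $\langle R(C_1,C_2)C_1,C_2\rangle=0$ via the curvature formula \eqref{eq:R}.

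The structural fact I would isolate first is that each $\nabla_{C_i}C_j$ is a scalar multiple of $X_1$ alone. Since $\nabla$ preserves $\langle\cdot,\cdot\rangle$, we get $\langle\nabla_{C_i}C_1,C_1\rangle=\langle\nabla_{C_i}C_2,C_2\rangle=0$ and $\langle\nabla_{C_i}C_1,C_2\rangle=-\langle\nabla_{C_i}C_2,C_1\rangle$, and then Lemma \ref{lemm:2} (valid here by Remark \ref{rem:1}) forces all $C$--$C$ components of $\nabla_{C_i}C_j$ to vanish; Proposition \ref{prop:106}, namely $\langle\nabla_{C_i}C_j,X_1\rangle=0$ from \eqref{eq:137}, together with the null pairing $\langle X_1,X_1\rangle=0$, $\langle X_1,X_2\rangle=1$, then kills the $X_2$-component, leaving $\nabla_{C_i}C_j=\langle\nabla_{C_i}C_j,X_2\rangle X_1$. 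I would also record the auxiliary identity $\langle\nabla_{C_k}X_1,C_j\rangle=0$ for $j\in\{1,2\}$, obtained by differentiating $\langle X_1,C_j\rangle=0$ and using $\langle X_1,\nabla_{C_k}C_j\rangle=0$ (as $\nabla_{C_k}C_j$ is proportional to $X_1$ and $\langle X_1,X_1\rangle=0$).

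With these in hand, I would expand $\langle R(C_1,C_2)C_1,C_2\rangle$ through \eqref{eq:R} into its four summands. For the two $\nabla\nabla$ terms, $\langle\nabla_{C_1}\nabla_{C_2}C_1,C_2\rangle$ and $\langle\nabla_{C_2}\nabla_{C_1}C_1,C_2\rangle$, write $\nabla_{C_2}C_1$ resp.\ $\nabla_{C_1}C_1$ as a multiple of $X_1$, apply the Leibniz rule, and note that each surviving inner product is a multiple of $\langle X_1,C_2\rangle=0$ or of $\langle\nabla_{C_k}X_1,C_2\rangle=0$. For the two $\nabla_{\nabla\cdot}$ terms, $\langle\nabla_{\nabla_{C_1}C_2}C_1,C_2\rangle$ and $\langle\nabla_{\nabla_{C_2}C_1}C_1,C_2\rangle$, since $\nabla_{C_i}C_2$ and $\nabla_{C_i}C_1$ are multiples of $X_1$, each collapses to a multiple of $\langle\nabla_{X_1}C_1,C_2\rangle$, which is $0$ by Proposition \ref{prop:106}. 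Hence $\langle R(C_1,C_2)C_1,C_2\rangle=0$, and comparison with the Gauss computation yields $\kappa+\nu(\alpha^2+\beta^2)=0$, which is Proposition \ref{prop:107}.

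Essentially all the work is already behind us in Proposition \ref{prop:106}, which is exactly what forces $\nabla_{C_i}C_j\parallel X_1$; the only point to watch is the degeneracy of $\langle\cdot,\cdot\rangle$ on $\operatorname{span}\{X_1,X_2\}$ --- in particular that a multiple of $X_1$ is $\langle\cdot,\cdot\rangle$-orthogonal to $X_1$ itself --- which is what makes all the cross terms collapse.
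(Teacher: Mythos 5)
Your proposal is correct and is essentially the paper's own argument: the paper's proof of Proposition \ref{prop:107} is a one-line instruction to compute $\langle R(C_1,C_2)C_1,C_2\rangle$ from Lemma \ref{lemm:2}, Proposition \ref{prop:106}, \eqref{eq:R}, and the Gauss equation, and your write-up carries out exactly that computation with exactly those ingredients. Your explicit observation that $\nabla_{C_i}C_j$ is forced to be a multiple of $X_1$ (and the careful handling of the null pairing $\langle X_1,X_1\rangle=0$) is a correct and welcome elaboration of what the paper leaves implicit.
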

\begin{proof}
    By computing $\langle R(C_{1}, C_{2})C_{1}, C_{2}\rangle$ by using Lemma \ref{lemm:2}, Proposition \ref{prop:106}, \eqref{eq:R}, and the Gauss equation.
\end{proof}
We can prove Theorem \ref{theo:ne-4} by Proposition \ref{prop:107}.
\begin{corollary}
    If $\lambda$ is a complex principal curvature of an isoparametric hypersurface whose Petrov type is type IV of index 2 in $S^{5}_{3}$, then $|\lambda|=1$ holds.
\end{corollary}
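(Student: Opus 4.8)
The plan is to obtain the statement as an immediate consequence of Proposition~\ref{prop:107}, which was proved under exactly the standing hypotheses of Section~\ref{sec:4-3-4} and is valid in all three ambient spaces $\mathbb{R}^{5}_{2}$, $S^{5}_{2}$, $S^{5}_{3}$. First I would pin down the sign data for $S^{5}_{3}$: here $\kappa=1$, and a $4$-dimensional hypersurface of index $2$ in $S^{5}_{3}$ has type $\delta=-1$ (as recorded at the start of Section~\ref{sec:4}; equivalently, the unit normal of such a hypersurface must be timelike). Since $\delta=\kappa\cdot\nu$, this forces $\nu=-1$. Plugging $\kappa=1$ and $\nu=-1$ into the identity $\kappa+\nu(\alpha^{2}+\beta^{2})=0$ of Proposition~\ref{prop:107} yields $\alpha^{2}+\beta^{2}=1$.

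It remains to identify $\lambda$. By hypothesis the Jordan normal form of $A$ is constant, so Proposition~\ref{prop:1} gives that $(A,\langle\cdot,\cdot\rangle)$ has the single algebraic type IV of index $2$, and the local frame field $C_{1},C_{2},X_{1},X_{2}$ of Section~\ref{sec:4-3-4} realizes it. The $C_{1},C_{2}$-block contributes the unique complex principal curvature (together with its conjugate), namely $\alpha_{1}\pm\beta_{1}\sqrt{-1}$, written $\alpha\pm\beta\sqrt{-1}$ in Proposition~\ref{prop:107}; the only other principal curvature, $b$, is real. Hence $\lambda=\alpha\pm\beta\sqrt{-1}$ and $|\lambda|^{2}=\alpha^{2}+\beta^{2}=1$, so $|\lambda|=1$.

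There is no substantive obstacle here; the corollary is the degenerate residue of the non-existence Theorem~\ref{theo:ne-4}, parallel to the corollaries following Theorems~\ref{theo:ne-1} and~\ref{theo:ne-2}. The only points deserving a word of care are the sign bookkeeping---in $\mathbb{R}^{5}_{2}$ one has $\kappa=0$, $\nu\neq0$, so Proposition~\ref{prop:107} forces $\alpha^{2}+\beta^{2}=0$, and in $S^{5}_{2}$ one has $\kappa=1$, $\nu=1$, so it forces $\alpha^{2}+\beta^{2}=-1$; both are impossible, which is precisely why Theorem~\ref{theo:ne-4} gives outright non-existence there, while only the timelike-normal case $S^{5}_{3}$ leaves room for a hypersurface and imposes the normalization $|\lambda|=1$---and the remark that in type IV of index $2$ there is exactly one pair of complex principal curvatures, so that $\lambda$ is unambiguously the eigenvalue of the $2\times2$ rotation-type block.
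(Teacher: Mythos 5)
Your proposal is correct and follows exactly the route the paper intends: the corollary is the immediate residue of Proposition \ref{prop:107} in the one remaining case $\kappa=1$, $\nu=-1$ (i.e.\ $\delta=-1$ for $S^{5}_{3}$), which gives $\alpha^{2}+\beta^{2}=1$ and hence $|\lambda|=1$ for the unique pair of complex principal curvatures $\alpha\pm\beta\sqrt{-1}$. Your sign bookkeeping for the excluded ambient spaces and your identification of $\lambda$ with the eigenvalues of the $C_{1},C_{2}$-block are both accurate.
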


\appendix
\section{Non-existence of Index 1}\label{sec:A}
We can prove the claim of the non-existence of index 1 by a similar technique in Section \ref{sec:4-3}.
\begin{theorem}
    \label{theo:ne-5}
    There exist no isoparametric hypersurfaces whose Petrov type is type IV of index 1 in $\mathbb{R}^{n+1}_{1}$, $\mathbb{R}^{n+1}_{2}$, $S^{n+1}_{1}$, or $H^{n+1}_{2}$.
\end{theorem}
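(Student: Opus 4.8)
The plan is to run the argument of Theorem~\ref{theo:ne-4} (type~IV of index~2, Propositions~\ref{prop:106} and \ref{prop:107}) in the index~1 setting, where the single Jordan pair $X_{1},X_{2}$ is replaced by the real eigendirections $Y_{1},\dots,Y_{n-2}$, each played against the complex block exactly as $X_{1}$ was there. Assume for contradiction that such an $M$ exists. The minimal polynomial of $A$ in type~IV of index~1 is $\{(t-\alpha)^{2}+\beta^{2}\}(t-a_{1})\cdots(t-a_{n-2})$, so the Jordan normal form of $A$ is constant, and Proposition~\ref{prop:1} provides a local frame field $C_{1},C_{2},Y_{1},\dots,Y_{n-2}$ with $AC_{1}=\alpha C_{1}+\beta C_{2}$, $AC_{2}=-\beta C_{1}+\alpha C_{2}$, $AY_{i}=a_{i}Y_{i}$, and metric $\langle C_{1},C_{1}\rangle=-1$, $\langle C_{2},C_{2}\rangle=1$, $\langle C_{1},C_{2}\rangle=0$, $\langle Y_{i},Y_{j}\rangle=\delta_{ij}$, $\langle C_{k},Y_{i}\rangle=0$, where $\beta>0$. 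Since $\nabla$ preserves $\langle\cdot,\cdot\rangle$, each $\langle\nabla_{C_{k}}C_{l},C_{m}\rangle$ is antisymmetric in $l,m$.

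First I would reprove the analogue of Lemma~\ref{lemm:2}: substituting $\{C_{1},C_{2}\}C_{2}$ and $\{C_{1},C_{2}\}C_{1}$ into \eqref{eq:Codazzi} gives $\langle\nabla_{C_{1}}C_{1},C_{2}\rangle=\langle\nabla_{C_{2}}C_{1},C_{2}\rangle=0$, exactly as recorded in Remark~\ref{rem:1}; together with metric preservation this forces all $C$-components of $\nabla_{C_{k}}C_{l}$ ($k,l\in\{1,2\}$) to vanish, so each $\nabla_{C_{k}}C_{l}$ lies in $\mathrm{span}\{Y_{1},\dots,Y_{n-2}\}$. Next, for each fixed $i$ I would repeat the computations of Lemmas~\ref{lemm:101} and \ref{lemm:105} with $(X_{1},b)$ replaced by $(Y_{i},a_{i})$: the Codazzi equations $\{C_{1},Y_{i}\}Y_{i}$ and $\{C_{2},Y_{i}\}Y_{i}$ give $\langle\nabla_{Y_{i}}Y_{i},C_{1}\rangle=\langle\nabla_{Y_{i}}Y_{i},C_{2}\rangle=0$, while $\{C_{1},Y_{i}\}C_{1}$, $\{C_{1},Y_{i}\}C_{2}$, $\{C_{2},Y_{i}\}C_{1}$, $\{C_{2},Y_{i}\}C_{2}$ yield
\begin{equation*}
    \langle\nabla_{C_{1}}C_{1},Y_{i}\rangle=\langle\nabla_{C_{2}}C_{2},Y_{i}\rangle,\qquad
    \langle\nabla_{C_{1}}C_{2},Y_{i}\rangle=-\langle\nabla_{C_{2}}C_{1},Y_{i}\rangle=\frac{-2\beta^{2}}{(\alpha-a_{i})^{2}+\beta^{2}}\,\langle\nabla_{Y_{i}}C_{1},C_{2}\rangle ,
\end{equation*}
the denominator being positive since $\beta>0$ (so repeated real principal curvatures cause no trouble).

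Then, as in the proof of Proposition~\ref{prop:106}, I would expand $\langle R(C_{1},Y_{i})C_{1},Y_{i}\rangle$ and $\langle R(C_{2},Y_{i})C_{2},Y_{i}\rangle$ via \eqref{eq:R} using the relations just obtained; subtracting them annihilates the common $\nabla_{Y_{i}}\nabla_{C_{1}}C_{1}$ term, and the Gauss equation \eqref{eq:Gauss} makes the left side vanish, leaving $0=-2\langle\nabla_{C_{1}}C_{1},Y_{i}\rangle^{2}-2\langle\nabla_{C_{1}}C_{2},Y_{i}\rangle^{2}$. Hence $\langle\nabla_{C_{k}}C_{l},Y_{i}\rangle=0$ for all $k,l,i$ and, by the displayed identity, $\langle\nabla_{Y_{i}}C_{1},C_{2}\rangle=0$ as well; combined with the earlier step this gives $\nabla_{C_{k}}C_{l}\equiv 0$ for all $k,l\in\{1,2\}$. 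Therefore $[C_{1},C_{2}]=0$ and \eqref{eq:R} gives $\langle R(C_{1},C_{2})C_{1},C_{2}\rangle=0$, while \eqref{eq:Gauss} gives $\langle R(C_{1},C_{2})C_{1},C_{2}\rangle=\kappa+\nu(\alpha^{2}+\beta^{2})$; thus $\kappa+\nu(\alpha^{2}+\beta^{2})=0$, i.e. $\delta=\kappa\nu=-(\alpha^{2}+\beta^{2})<0$. But a hypersurface of index~1 in $\mathbb{R}^{n+1}_{1}$, $\mathbb{R}^{n+1}_{2}$, $S^{n+1}_{1}$, or $H^{n+1}_{2}$ has $\delta\in\{0,1\}$, a contradiction, and the theorem follows. (As in Section~\ref{sec:4-3-1}, in the remaining ambient spaces $S^{n+1}_{2}$ and $H^{n+1}_{1}$, where $\delta=-1$, this argument instead yields $|\lambda|=1$ for a complex principal curvature $\lambda$ rather than a contradiction.)

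The conceptual content is identical to that of Theorem~\ref{theo:ne-4}, so the only real work is the bookkeeping: writing out and solving the small linear systems coming from the Codazzi equation $\{\cdot,\cdot\}\cdot$ for each index $i$, and carrying out the two curvature expansions with the correct signs arising from $\langle C_{1},C_{1}\rangle=-1$. The single point that must be checked for general $n$ — namely that possibly coincident real eigenvalues $a_{i}$ do not spoil the argument — is handled automatically, since the only denominators that occur are $(\alpha-a_{i})^{2}+\beta^{2}>0$; this is where I expect the bulk of, but only mild, care to be needed.
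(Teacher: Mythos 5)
There is a genuine gap at the central step. You claim that subtracting $\langle R(C_{2},Y_{i})C_{2},Y_{i}\rangle$ from $\langle R(C_{1},Y_{i})C_{1},Y_{i}\rangle$ gives $0$ on the left by the Gauss equation, leaving $0=-2\langle\nabla_{C_{1}}C_{1},Y_{i}\rangle^{2}-2\langle\nabla_{C_{1}}C_{2},Y_{i}\rangle^{2}$. This is false: by \eqref{eq:Gauss}, $\langle R(C_{1},Y_{i})C_{1},Y_{i}\rangle=\kappa+\nu\alpha a_{i}$ and $\langle R(C_{2},Y_{i})C_{2},Y_{i}\rangle=-(\kappa+\nu\alpha a_{i})$, so the difference is $2(\kappa+\nu\alpha a_{i})$, which need not vanish. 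The cancellation you are importing from Proposition \ref{prop:106} works only because $X_{1}$ there is \emph{null} with $\langle AX_{1},X_{1}\rangle=0$, so both curvature components vanish individually; replacing $X_{1}$ by a spacelike eigenvector $Y_{i}$ destroys exactly this feature. The correct analogue of your computation is Proposition \ref{prop:103} (type III of index 2), whose identity \eqref{eq:120} has the nonzero left-hand side $2(\kappa+\nu\alpha a_{1})$ \emph{and} cross terms $P,Q,R$ coupling the two non-null eigendirections -- terms your expansion also omits for $n>3$, where $\langle\nabla_{C_{k}}Y_{i},Y_{j}\rangle$ and $\langle\nabla_{Y_{i}}Y_{j},C_{k}\rangle$ with $j\neq i$ enter. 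That the paper needs Cartan's formula and a lengthy case analysis to handle type III should be taken as evidence that non-null eigendirections cannot be dispatched by the type IV subtraction trick. Consequently your conclusions $\langle\nabla_{C_{k}}C_{l},Y_{i}\rangle=0$, $\langle\nabla_{Y_{i}}C_{1},C_{2}\rangle=0$, and hence $\kappa+\nu(\alpha^{2}+\beta^{2})=0$, are unjustified.

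The paper's actual proof is split. For $\mathbb{R}^{n+1}_{1}$ and $S^{n+1}_{1}$ (where $\nu=1$) it simply cites \cite[Theorem 4.10]{MR783023} and \cite[Theorem 4.1]{MR3077208}. For $\mathbb{R}^{n+1}_{2}$ and $H^{n+1}_{2}$ (where $\nu=-1$, so $\kappa+\nu(\alpha^{2}+\beta^{2})=\kappa-(\alpha^{2}+\beta^{2})<0$) it computes $\langle R(C_{1},C_{2})C_{1},C_{2}\rangle$ directly via \eqref{eq:R}, Lemma \ref{lemm:2}, and the type III relations for $\langle\nabla_{C_{k}}C_{l},Y_{i}\rangle$, obtaining a manifestly non-negative sum of squares with coefficients $4\beta^{2}/((\alpha-a_{k})^{2}+\beta^{2})$ -- a contradiction. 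Note that this sum-of-squares identity, which is what your argument would have to reduce to, yields no contradiction when $\nu=+1$ (the left side is then positive), which is precisely why those two cases cannot be handled this way and must be quoted from the literature. If you want to salvage a self-contained proof, you would have to reproduce the substance of Magid's and Li's arguments for the $\nu=1$ cases rather than the type IV index 2 computation.
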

\begin{proof}
    In the case where $\kappa=0$ and $\nu=1$, and where $\kappa=1$ and $\nu=1$, we have already proven by \cite[Theorem 4.10]{MR783023} and \cite[Theorem 4.1]{MR3077208}, respectively.
    We give a proof in the other cases by contradiction.
    Suppose that there exists an isoparametric hypersurface $M$ whose Petrov type is type IV of index 1 in $\mathbb{R}^{n+1}_{2}$ or $H^{n+1}_{2}$.
    By Proposition \ref{prop:1}, there exists a local frame field $C_{1}, C_{2}, Y_{1}, \ldots, Y_{n-2}$ such that
    \begin{equation*}
        AC_{1}=\alpha C_{1}+\beta C_{2},\quad
        AC_{2}=-\beta C_{1}+\alpha C_{2},\quad
        AY_{k}=a_{k}Y_{k},
    \end{equation*}
    \begin{align*}
        \langle C_{i}, C_{j}\rangle
        =
        \begin{cases}
            (-1)^{i}, & (i=j),\\
            0, & (\text{otherwise}),
        \end{cases}
        \quad
        \langle Y_{k}, Y_{l}\rangle
        =
        \begin{cases}
            1, & (k=l),\\
            0, & (\text{otherwise}),
        \end{cases}
        \quad
        \langle C_{i}, Y_{k}\rangle
        =
        0.
    \end{align*}
    Using the Gauss equation, we have
    \begin{equation}
        \langle R(C_{1}, C_{2})C_{1}, C_{2}\rangle
        =
        \kappa+\nu(\alpha^{2}+\beta^{2})
        <
        0
        .
        \label{eq:60}
    \end{equation}
    Lemma \ref{lemm:2} is a claim in Section \ref{sec:4-3-1}.
    However, Lemma \ref{lemm:2} also holds in Appendix \ref{sec:A}.
    By Lemma \ref{lemm:2} and \eqref{eq:R}, the right-hand side of \eqref{eq:60} is equal to
    \begin{equation*}
        \sum_{k}\langle \nabla_{C_{1}}C_{2}, Y_{k}\rangle^{2}
        +
        \sum_{k}\langle \nabla_{C_{1}}C_{1}, Y_{k}\rangle^{2}
        +
        \sum_{k}
        \frac{4\beta^{2}}{(\alpha-a_{k})^{2}+\beta^{2}}\langle\nabla_{Y_{k}}C_{1}, C_{2}\rangle^{2}
        \geqq 0
        ,
    \end{equation*}
    which is a contradiction.
\end{proof}

\end{document}